\newtheorem{definition}{Definition}[section]
\newtheorem{remark}{Remark}[section]
\newcommand{\argmax}{\operatornamewithlimits{argmax}}
\newcommand{\1}{\mathbbm{1}}
\newcommand{\ve}{\varepsilon}
\newcommand{\R}{\mathbb{R} }
\pgfplotsset{compat=1.10}
\newcommand{\oP}{o_{\sss \PR}}
\newcommand{\PR}{\ensuremath{\mathbb{P}}}
\newcommand{\E}{\ensuremath{\mathbb{E}}}
\newcommand{\e}{\ensuremath{\mathrm{e}}}
\newcommand{\var}{\mathrm{Var}}
\newcommand{\cS}{\mathcal{S}}
\newcommand{\cB}{\mathcal{B}}
\newcommand{\hsig}{\hat{\sigma}}
\newcommand{\ind}[1]{\1_{\{#1\}} }
\newcommand{\sign}[1]{\mathrm{Sign} }
\def\sss{\scriptscriptstyle}
\newcommand{\cD}{\mathcal{D}}
\newcommand{\cF}{\mathscr{F}}
\newcommand{\cG}{\mathcal{G}}
\newcommand{\SBM}{\mathrm{CSBM}}
\newcommand{\sgn}{\mathrm{sign}}
\newcommand{\err}{\mathrm{Err}}
\newcommand{\spec}{\hsig_{\sss \mathrm{Spec}}}
\newcommand{\degree}{\hsig_{\sss \mathrm{Deg}}}
\newcommand{\best}{\hsig_{\sss \mathrm{Best}}}
\newcommand{\map}{\hsig_{\sss \mathrm{MAP}}}
\newcommand{\hX}{\hat{X}}
\newcommand{\DKL}{D_{\sss \mathrm{KL}}}
\newcommand{\true}{\sigma_0}
\renewcommand{\ln}{\log}
\numberwithin{equation}{section}
\newcommand\blfootnote[1]{%
  \begingroup
  \renewcommand\thefootnote{}\footnote{#1}%
  \addtocounter{footnote}{-1}%
  \endgroup
}  
\begin{document}

\newcommand\relatedversion{}
\renewcommand\relatedversion{\thanks{A full version of the paper can be accessed at \protect\url{arXiv:2107.06338}}} 

\title{\Large Spectral recovery of binary censored block models\relatedversion}
\author{Souvik Dhara\thanks{Department of Mathematics, Massachusetts Institute of Technology}, Julia Gaudio\thanks{Department of Industrial Engineering and Management Sciences, Northwestern University}, Elchanan Mossel$^{\dagger}$, Colin Sandon$^{\dagger}$}

\date{}

\maketitle


\fancyfoot[R]{\scriptsize{Copyright \textcopyright\ 2022 by SIAM\\
Unauthorized reproduction of this article is prohibited}}





\begin{abstract} \small\baselineskip=9pt Community detection is the problem of identifying community structure in graphs. Often the graph is modeled as a sample from the Stochastic Block Model, in which each vertex belongs to a community. The probability that two vertices are connected by an edge depends on the communities of those vertices. In this paper, we consider a model of {\em censored} community detection with two communities, where most of the data is missing as the status of only a small fraction of the potential edges is revealed. 
In this model, vertices in the same community are connected with probability $p$ while vertices in opposite communities are connected with probability $q$. The connectivity status of a given pair of vertices $\{u,v\}$ is revealed with probability $\alpha$, independently across all pairs, where $\alpha = \nicefrac{t \log(n)}{n}$. We establish the information-theoretic threshold $t_c(p,q)$, such that no algorithm succeeds in recovering the communities exactly when $t < t_c(p,q)$. We show that when $t > t_c(p,q)$, a simple spectral algorithm based on a weighted, signed adjacency matrix succeeds in recovering  the communities exactly. 

While spectral algorithms are shown to have near-optimal performance in the symmetric case, we show that they may fail in the asymmetric case where the connection probabilities inside the two  communities are allowed to be different.
In particular, we show the existence of a parameter regime where a simple two-phase algorithm succeeds but {\em any} algorithm based on the top two eigenvectors of the weighted, signed adjacency matrix fails. 

 \blfootnote{Emails: \href{mailto:sdhara@mit.edu}{sdhara@mit.edu}, \href{mailto:julia.gaudio@northwestern.edu}{julia.gaudio@northwestern.edu}, \href{mailto:elmos@mit.edu}{elmos@mit.edu}, \href{mailto:csandon@mit.edu}{csandon@mit.edu}}
\blfootnote{\emph{Acknowledgement:} 
S.D., E.M and C.S. are partially supported by Vannevar Bush Faculty Fellowship ONR-N00014-20-1-2826. E.M. and C.S. are partially supported by NSF award DMS-1737944. E.M. is partially supported by Simons Investigator award (622132) and by ARO MURI W911NF1910217. Part of this work was completed while J.G. was at the Department of Mathematics, Massachusetts Institute of Technology.}

\end{abstract}

\maketitle

\section{Introduction}
The problem of detecting community structure is an important question in the study of networks. The canonical formulation of this problem is made via the \emph{stochastic block model} (SBM), where each vertex is a member of one of $K$ communities, and vertices create an edge independently based on their latent community assignments.
The objective is to recover the latent community assignments based on the observed network. One may be interested in exact, partial, or weak recovery. When the average degree scales as $\Theta(\log n)$, such that the graph is connected with high probability, one is often  interested in recovering the community assignments exactly. Previous literature gives us precise characterization on when the exact recovery problem is efficiently solvable and when it is information theoretically impossible \cite{ABH16,MNS16,AS15,Krzakala2013}. See the survey \cite{A18} for an overview. 

Popular approaches for community detection in the stochastic block model include spectral algorithms and semidefinite programming. Given the adjacency matrix representation of the graph, spectral algorithms use properties of the eigenvalues and eigenvectors of the matrix in order to infer the communities \cite{Newman2013,Chin2015,Yun2014,Abbe2020}. 
Other approaches are based on the ``non-backtracking'' matrix of the graph \cite{Krzakala2013,Bordenave2015}. Semidefinite programming approaches \cite{Montanari2016,Hajek2016,Hajek2016b} are typically a relaxation of the maximum likelihood estimator, which is NP-hard to compute directly.

In this paper, we study the community detection problem when we only have partial knowledge about the graph. The information about the status of an edge between each pair of vertices is \emph{censored} independently, i.e., an edge between a pair of vertices can be present, absent or censored. A precise description is given below in Definition~\ref{defn:CSBM}. This model is referred to as the \emph{Censored Stochastic Block Model} (CSBM). 
Abbe, Bandeira, Bracher, and Singer \cite{ABBS14} were the first to address the exact recovery problem on CSBM when the within community edge probability $p$ and the between community edge probability $q$ satisfy $p+q =1$. They show that the Maximum Likelihood Estimator (MLE) achieves exact recovery up to the information theoretic threshold. However, 
the MLE turns out to be equivalent to the Max-Cut problem and is thus NP-hard to compute. 
For this reason, \cite{ABBS14} considered the Semidefinite Programming (SDP) relaxation which was shown to work in a certain regime. 
Later, Hajek, Wu, and Xu \cite{Hajek2015, Hajek2016} proved that, under this set up, the SDP relaxation actually works up to the information theoretic threshold. 

The impressive line of work above leaves an important question open: {\em 
what families of algorithms achieve exact recovery for the CSBM up to the information theoretic threshold}? Given that semidefinite programming is less efficient than spectral approaches, the main question of this paper is:\\

{\bf Question:}
Are spectral algorithms information theoretically optimal for exact recovery of CSBMs?\\

It is important to note here the difference between purely spectral algorithms and algorithms that combine spectral algorithms with additional clean-up procedures. The results on exact recovery for the standard block model in~\cite{ABH16,MNS16} used spectral algorithms as the first step in a two stage procedure. In the usual uncensored stochastic block model, Abbe, Fan, Wang, and Zhong \cite{Abbe2020} recently showed that the spectral algorithm is optimal, without need for a clean-up stage. It was not immediately clear whether the same would be true for the censored version that we consider, since observations are ternary (present, absent, censored) rather than binary (present, absent).

We are also interested in answering the  spectral question in a more general setup than what was previously considered. First, previous results focused on the case that $p+q=1$ which ensures that a present edge carries the same relative information as an absent edge.
We see that this condition can be avoided.
Second, we are also interested in cases where the edge probabilities within different communities are different. In our main results we show that:
\begin{itemize}
    \item[$\rhd$] spectral algorithms are optimal for the CSBM for all values of $p,q$ above the information theoretic threshold, and
    \item[$\rhd$] if the interconnection probabilities inside two communities are different, then spectral algorithms are sub-optimal in a regime of parameters.
\end{itemize}
The latter result is not a sign of a computational-statistical gap as we find a simple two stage algorithm that exactly recovers the communities in this case. 

\section{Model and main results} 
\subsection{Model description.}
We start by defining the Censored Stochastic Block Model.
\begin{definition}[Censored Stochastic Block Model (CSBM)] \label{defn:CSBM}
We have $n$ vertices with labels given by $\sigma_0 \in \{\pm 1\}^n := \mathcal{S}$. The vertices $i$ with $\true(i) = +1$ (resp.~$\true(i) = -1$) are said to be in Community~1 (resp.~Community~2). The labels are generated independently, with
\[\mathbb{P}(\sigma_0(i) = +1) = \frac{1}{2} \text{~~and~~} \mathbb{P}(\sigma_0(i) = -1) = \frac{1}{2}.\]
Two vertices $i \neq j$ are connected by an edge with probability $p_1$ if $\true(i) = \true(j) = +1$, $p_2$ if $\true(i) = \true(j) = -1$, and $q$ if $\true(i) \neq \true(j)$. Self-loops do not occur. Each edge status is revealed independently with probability $\alpha = \nicefrac{t \log n}{n}$ for a constant $t > 0$. The output is a graph with edge statuses given by $\{\texttt{present},\texttt{absent}, \texttt{censored}\}$.
We denote this censored model by $\text{CSBM}(p_1, p_2,q,\alpha)$. In the symmetric case where $p_1 = p_2 = p$, we denote the model by $\text{CSBM}(p,q,\alpha)$. For a visualization of the model, see Figure \ref{fig:sampling-diagram}. 
\begin{figure}[h]
    \centering
    \includegraphics[scale=0.6]{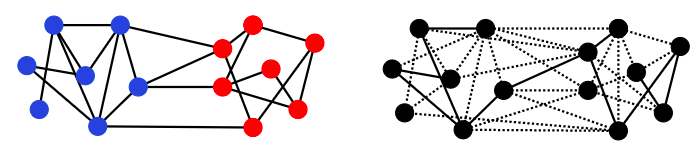}
    \caption{Blue and red vertices represent Community 1 and 2, respectively. Each pair of vertices is connected with probability $p_1$, $p_2$, or $q$ according to the communities of the vertices. The black graph is the sample that we see; vertex labels are absent, and edges may be present (solid line), absent (no line), or censored (dotted line).}
    \label{fig:sampling-diagram}
\end{figure}
\end{definition}

\noindent {\bf Objective.} We observe a graph~$G$ from $\SBM(p_1,p_2,q,t)$ with vertex labels removed (i.e., $\true$ is unknown), and edges labeled by $\{\texttt{present},\texttt{absent}, \texttt{censored}\}$. An estimator $\hsig$ is said to achieve exact recovery if 
\begin{eq}\label{def:exact-recov}
\lim_{n\to\infty}\PR(\exists s\in \{\pm 1\}: \hsig = s\true) = 1.  
\end{eq}
We say that exact recovery is possible if there exists some estimator $\hsig$ such that \eqref{def:exact-recov} holds. \\

Next, we provide formal statements of the main results, along with the key conceptual contributions.

\subsection{Optimality of the spectral algorithm in the symmetric case.} \label{sec:spec-result}
Consider the symmetric case $p_1 = p_2 = p$ and $p\neq q$.
Throughout, we assume that $p,q \in (0,1)$ and $t>0$ are fixed. We will establish that the information-theoretic threshold for exact recovery is given by:
\begin{eq}\label{threshold-defn}
 t_c(p,q):= \frac{2}{\left(\sqrt{p} - \sqrt{q} \right)^2 + \left(\sqrt{1-q} - \sqrt{1-p} \right)^2}.
\end{eq}
Our first result shows that if $t<t_c(p,q)$, then any estimator fails to achieve exact recovery with high probability. 
\begin{theorem}\label{theorem:impossibility} If $t< t_c(p,q)$, then any estimator $\hsig$ satisfies $\PR(\hsig = \true) = o(1)$.
\end{theorem}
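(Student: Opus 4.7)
My approach is the standard genie-aided plus second-moment argument from the SBM literature. For each vertex $i$, let $A_i$ denote the event that flipping the label $\sigma_0(i)$ produces an assignment whose posterior probability (under the observed graph) is at least that of $\sigma_0$, and set $Z = \sum_{i=1}^n \mathbf{1}_{A_i}$. The MAP estimator is Bayes-optimal for exact recovery, and $Z \geq 1$ forces it either to misclassify some vertex or to break ties incorrectly; hence it suffices to prove $\PR(Z \geq 1) \to 1$ when $t < t_c(p,q)$, which reduces the theorem to first- and second-moment bounds on $Z$.

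For the first moment, I would condition on $\sigma_0|_{\bar i}$. Since $n_1, n_2$ concentrate near $n/2$, the event $A_i$ is (up to lower-order terms) the error event of an equi-prior binary hypothesis test between two product distributions on $(X_{ij})_{j\neq i}$. Each coordinate lies in $\{\mathtt{pres},\mathtt{abs},\mathtt{cens}\}$ with distribution $f := (\alpha p, \alpha(1{-}p), 1{-}\alpha)$ if $i, j$ are in the same community and $g := (\alpha q, \alpha(1{-}q), 1{-}\alpha)$ otherwise; flipping $\sigma_0(i)$ swaps $f$ and $g$. A direct calculation gives the per-coordinate Bhattacharyya coefficient
\[\sum_x \sqrt{f(x)\,g(x)} \;=\; 1 - \tfrac{\alpha}{2}\bigl[(\sqrt{p}-\sqrt{q})^2 + (\sqrt{1{-}p}-\sqrt{1{-}q})^2\bigr] \;=\; 1 - \alpha / t_c(p,q),\]
so the product Bhattacharyya coefficient over the $n-1$ coordinates equals $(1 - \alpha/t_c)^{n-1} = n^{-t/t_c + o(1)}$. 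A sharp Bahadur--Rao-type lower bound on the optimal binary test then yields $\PR(A_i \mid \sigma_0|_{\bar i}) \geq n^{-t/t_c - o(1)}$, so $\mathbb{E}[Z] \geq n^{1 - t/t_c - o(1)} \to \infty$ whenever $t < t_c$.

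For the second moment, note that for $i_1 \neq i_2$ the events $A_{i_1}, A_{i_2}$ depend only on edges incident to $i_1$ and $i_2$ respectively, which overlap only in the single edge $\{i_1, i_2\}$ (censored with probability $1-\alpha \to 1$). Conditioning on the status of this shared edge decouples the two events and gives $\PR(A_{i_1} \cap A_{i_2}) \leq (1 + o(1)) \PR(A_{i_1}) \PR(A_{i_2})$. Hence $\operatorname{Var}(Z) \leq \mathbb{E}[Z] + o(1) \mathbb{E}[Z]^2$, and the Paley--Zygmund inequality delivers $\PR(Z \geq 1) \to 1$, completing the proof.

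The main obstacle is the sharp lower bound on the single-vertex test error in the first-moment step. A naive Hellinger-to-TV conversion loses a factor of two in the exponent and misidentifies the threshold, so one really needs a Cram\'er moderate-deviations expansion of the log-likelihood ratio tilted at parameter $1/2$, tracked to the correct polynomial prefactor. Because the three-valued summands degenerate as $\alpha \to 0$, the standard local limit theorem arguments must be checked in this sparse regime, for instance via a Poissonization of the summands or an explicit multinomial large-deviations computation. A subsidiary technicality is verifying that the $\sqrt{n}$-scale fluctuations of $n_1, n_2$ about $n/2$ do not perturb the exponent of $\PR(A_i)$ at the $\log n$ scale, so that the single-vertex bound survives averaging over $\sigma_0|_{\bar i}$.
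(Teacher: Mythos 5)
Your proposal follows the classic first-and-second-moment blueprint (count ``confusable'' vertices, show the count concentrates and diverges), whereas the paper works more constructively: it isolates a set $S'$ of vertices whose mutual connections are all censored, computes via Poissonization (Lemma~\ref{fact:stirling}) the probability that such a vertex has the specific \emph{balanced} degree profile $(m,m',m,m')$ with $m=\lfloor\sqrt{pq}\,t\ln n/2\rfloor$, $m'=\lfloor\sqrt{(1-p)(1-q)}\,t\ln n/2\rfloor$, shows this probability is $\omega(\log^2 n/n)$ when $t<t_c$, and then invokes Proposition~\ref{proposition:equiprobable} with $k=\omega(1)$ pairs of equiprobable swaps. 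The two routes compute the same exponent, but the paper's choice of a single tilted profile is exactly what makes the lower bound elementary and avoids appealing to any general large-deviations machinery.

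Two points in your outline are genuine gaps. First, you write that $Z\geq 1$ ``forces it either to misclassify some vertex or to break ties incorrectly,'' but this is not a valid conclusion: when a single flip ties the posterior, a deterministic MAP rule may still output $\sigma_0$, so $\PR(Z\geq 1)\to 1$ alone does \emph{not} give $\PR(\hsig=\sigma_0)=o(1)$. You do establish that $Z\to\infty$ in probability, and that can be converted to a failure bound of the form $1/(k+1)+\PR(Z<k)$ along the lines of Proposition~\ref{proposition:equiprobable}, but this step must actually be carried out (and must be phrased modulo the global sign flip). Second, the sharp lower bound $\PR(A_i)\geq n^{-t/t_c-o(1)}$ is the crux, and your proposal only names it as ``the main obstacle'' rather than supplying a proof. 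The difficulty you correctly flag---a Cram\'er/Bahadur--Rao expansion for a triangular array whose summands degenerate as $\alpha\to 0$---is precisely what the paper sidesteps: rather than lower bounding a tail probability, it computes the point mass at the tilted profile directly by Stirling. Your Bhattacharyya identity $\sum_x\sqrt{f(x)g(x)}=1-\alpha/t_c$ is correct and matches the paper's exponent, but until the single-vertex lower bound is established, the first-moment step is incomplete. Both gaps are fillable, but as written they are the hard part of the theorem rather than routine details.
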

\begin{figure}[h]
    \centering
    \includegraphics[scale = .4]{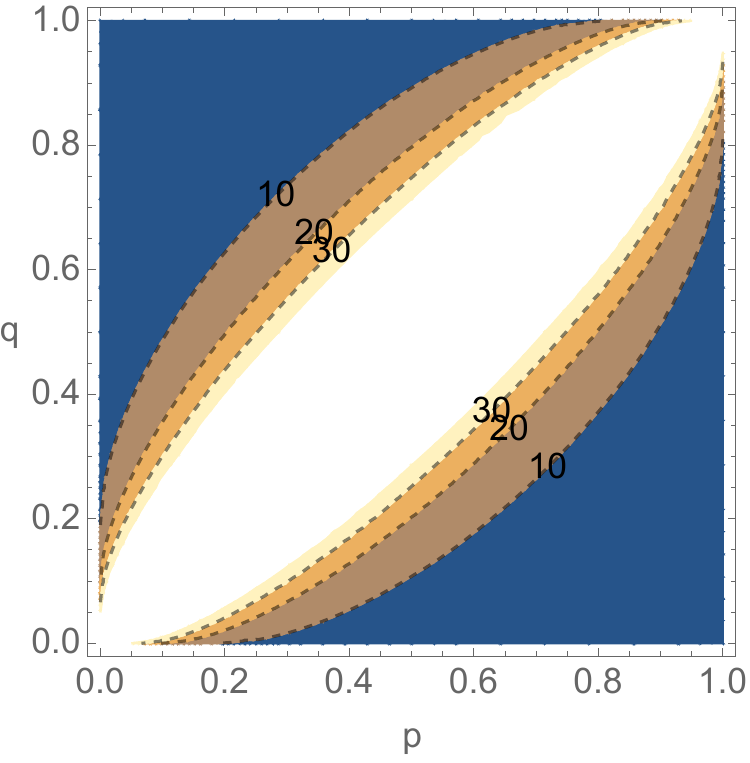}
    \caption{Contour plot for the region where exact recovery is impossible, i.e.,~$t<t_c(p,q)$ for $t= 10,20,30$. As $t$ decreases the impossibility region expands.}
    \label{fig:contour}
\end{figure}
Figure~\ref{fig:contour} illustrates the region $t< t_c(p,q)$.
Next, we describe the success of a simple spectral algorithm for $t>t_c(p,q)$. 
To this end, we need a weighted version of the signed adjacency matrix. Let 
\begin{eq}\label{eq:y-choice}
y = y(p,q) =  \frac{\log\left(\frac{1-q}{1-p} \right)}{\log\left(\frac{p}{q} \right)},
\end{eq}
and define the signed adjacency matrix as 
\begin{align*}
A_{ij} &= \begin{cases}
1 &\text{if } \{i,j\} \text{ is present}\\
-y &\text{if } \{i,j\} \text{ is absent}\\
0 &\text{if } \{i,j\} \text{ is censored}.
\end{cases}
\end{align*}
Note that $y > 0$ whenever $p\neq q$. Our next result shows that the spectral algorithm achieves exact recovery for $t>t_c(p,q)$. 
\begin{theorem}\label{theorem:exact-recovery-spectral} 
Let $u_1$ be the top eigenvector of $A$ and define $\spec = \sgn(u_1)$. 
If $t>t_c(p,q)$ and $p>q$,  
then there exists $\eta = \eta(p,q) > 0$ and $s \in \{\pm1\}$ such that, with probability $1-o(1)$,
\[s\sqrt{n} \min_{i \in [n]} \true (i)(u_1)_i \geq \eta. \]
Consequently, $\spec$ achieves exact recovery for $t> t_c(p,q)$. 
If $p<q$, all conclusions hold if we replace $A$ by $-A$. 
\end{theorem}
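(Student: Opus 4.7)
The strategy is a two-layer perturbation argument: first compute the conditional mean $\mathbb{E}[A\mid\true]$ and identify the eigenvector carrying the community signal, then transfer the analysis to the random $A$ via the entrywise eigenvector perturbation machinery of Abbe--Fan--Wang--Zhong \cite{Abbe2020}. The key observation is that the weight $y=\log((1-q)/(1-p))/\log(p/q)$ is exactly the value that makes $(1,-y,0)$ proportional to the log-likelihood ratio between the hypotheses $\true(i)\true(j)=+1$ versus $-1$ given the three possible observations \texttt{present}, \texttt{absent}, and \texttt{censored}. This calibration is what will force the Chernoff exponent in the single-vertex tail bound to be of Hellinger type and to match the denominator of (\ref{threshold-defn}). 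Conditional on $\true$, the entries of $A$ are independent, and $\mathbb{E}[A_{ij}\mid\true]=\alpha\mu_+$ on same-community pairs and $\alpha\mu_-$ on opposite-community pairs, where $\mu_+=p-y(1-p)$ and $\mu_-=q-y(1-q)$; a short computation using the positivity of the KL divergences $KL(p\|q)$ and $KL(q\|p)$ shows that $\mu_-<0<\mu_+$ whenever $p>q$.

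Writing $\mathbb{E}[A\mid\true]=\tfrac{\alpha(\mu_+-\mu_-)}{2}\true\true^{\mathsf T}+\tfrac{\alpha(\mu_++\mu_-)}{2}\mathbf{1}\mathbf{1}^{\mathsf T}-\alpha\mu_+ I$, the near-orthogonality of $\true$ and $\mathbf{1}$ (off by $O(\sqrt{n})$) makes the top eigenvector equal to $\true/\sqrt{n}$ up to an $o(1)$ correction, with eigenvalue $\lambda^\star\sim \tfrac{n\alpha(\mu_+-\mu_-)}{2}=\Theta(\log n)$. The competing $\mathbf{1}$-direction has eigenvalue $\tfrac{n\alpha(\mu_++\mu_-)}{2}$, which is strictly smaller in magnitude since $\mu_-<0<\mu_+$ gives $|\mu_++\mu_-|<\mu_+-\mu_-$. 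The entries of $A-\mathbb{E}[A\mid\true]$ are conditionally independent, mean-zero, and bounded in absolute value by $\max(1,y)$, so a matrix Bernstein bound yields $\|A-\mathbb{E}[A\mid\true]\|=O(\sqrt{\log n})$ with high probability, which is $o(\lambda^\star)$. Davis--Kahan then gives $\|u_1-s\true/\sqrt{n}\|_2=o(1)$ for some $s\in\{\pm 1\}$.

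To upgrade $\ell_2$-closeness to the uniform entrywise lower bound demanded by the theorem, we run the leave-one-out construction of \cite{Abbe2020}. For each $i\in[n]$, let $A^{(i)}$ be $A$ with its $i$-th row and column zeroed, and let $u^{(i)}$ be its top eigenvector; since $u^{(i)}$ is independent of the edges incident to $i$, the identity $\lambda_1 (u_1)_i=(Au_1)_i$ can be rewritten as $\sum_{j\neq i}A_{ij}(u^{(i)})_j$ plus an error driven by $u_1-u^{(i)}$. Using $(u^{(i)})_j\approx \true(j)/\sqrt{n}$, the main term is a sum of $n-1$ independent bounded random variables with mean $\sqrt{n}\true(i)\cdot\tfrac{\alpha(\mu_+-\mu_-)}{2}(1+o(1))$. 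A Chernoff bound optimized over the tilting parameter, which reduces exactly to a Bhattacharyya-type calculation thanks to the log-likelihood weighting, yields exponent $-\tfrac{t}{2}\bigl[(\sqrt{p}-\sqrt{q})^2+(\sqrt{1-q}-\sqrt{1-p})^2\bigr]\log n$; this is smaller than $-\log n$ precisely when $t>t_c(p,q)$, so a union bound over $i$ delivers $s\sqrt{n}\min_i\true(i)(u_1)_i\geq\eta$ with probability $1-o(1)$, and in particular $\spec=s\true$. The main technical obstacle is closing the propagation-of-error step at this sharp threshold: replacing $u_1$ by $u^{(i)}$ inside $(Au_1)_i$ must lose only lower-order error, which demands iterated $\ell_\infty$ control of $u_1-u^{(i)}$ with constants that do not degrade the Chernoff exponent. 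The AFWZ framework handles this for the binary SBM, and the censored, ternary-observation setting requires a direct adaptation, notably re-deriving the single-vertex moment generating function bound to absorb the $\{-y,0,1\}$-valued weights. The case $p<q$ follows by applying the entire argument to $-A$, which swaps the roles of $\mu_+$ and $\mu_-$.
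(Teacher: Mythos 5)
Your proposal follows essentially the same route as the paper: compute the eigenstructure of $A^\star = \E[A\mid\true]$, invoke the entrywise eigenvector perturbation machinery of Abbe--Fan--Wang--Zhong, and finish with a Chernoff bound on the main linear term whose exponent is of Hellinger/Bhattacharyya type due to the choice of $y$. The paper applies \cite[Theorem 2.1]{Abbe2020} as a black box (after verifying its four conditions via auxiliary lemmas), whereas you describe re-running the internal leave-one-out argument; this is the same mechanism, just at a different level of encapsulation, and you correctly flag that the propagation-of-error step and the $\{-y,0,1\}$-valued moment generating function bound are where the censored model requires genuine adaptation (the paper's Lemma \ref{lemma:supporting-2} handles the latter).

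There is one concrete technical misstep. You assert that matrix Bernstein gives $\Vert A - A^\star\Vert_2 = O(\sqrt{\log n})$. It does not: with per-entry variance $\Theta(\alpha) = \Theta(\log n / n)$ and $O(1)$-bounded entries, the row variance is $\Theta(\log n)$ and the Bernstein tail only yields $\Vert A - A^\star\Vert_2 = O(\log n)$ with the required $1-O(n^{-3})$ confidence. Since $\lambda_1^\star, \Delta^\star = \Theta(\log n)$, a bound of order $\log n$ is \emph{not} $o(\lambda^\star)$, so Davis--Kahan would only give $\Vert u_1 - s\true/\sqrt{n}\Vert_2 = O(1)$ and the AFWZ hypotheses would not be satisfied. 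The correct $O(\sqrt{\log n})$ spectral norm bound (the paper's Lemma \ref{lemma:supporting-1}) requires a finer argument tailored to sparse Bernoulli-type matrices: symmetrization, decomposition of the ternary signed matrix into independent symmetric pieces of the form $X(r)$, a Seginer/Feige--Ofek type bound $\E\Vert X(r)\Vert_2 = O(\sqrt{nr})$, and Talagrand's inequality, following Theorem 9 of Hajek--Wu--Xu. The conclusion you state is correct, but the tool you cite cannot deliver it; this step needs to be replaced.
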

The estimator $\spec$ does not require additional clean-up steps. 
For the non-censored SBM, Abbe et.~al.~\cite{Abbe2020} established an analogous result using entrywise eigenvector perturbation analysis, which we will also use (see Proposition~\ref{lemma:corollary-3.1-abbe}).
The key distinction of our algorithm to that of Abbe.~et.~al.~\cite{Abbe2020}  lies in the fact that the observed graph admits a ternary encoding as opposed to a binary encoding. 
It is worthwhile to remark that the choice of $y$ is important for the spectral algorithm to succeed up to the information theoretic threshold, and in fact, \eqref{eq:y-choice} is the only choice of $y$ that works.  
Intuitively, $y$ is the ratio of  evidence provided by a present edge on a vertex's community as compared to an absent edge.
More precisely, if we compute the log-likelihood ratio for a vertex to be in one community or the other, then it turns out to be a linear function of the number of present and absent edges to each community. 
In the symmetric case, the ratio of the coefficients corresponding to present and absent edges to Community~$j$ turns out to be $y$ for both $j=1,2$. When $p+q = 1$, i.e., in the special case considered in \cite{ABBS14,Hajek2016}, we have that $y=1$, and the relative information provided by present and absent edges are equal. Therefore, Theorem~\ref{theorem:exact-recovery-spectral} shows that the spectral algorithm with $y=1$ would succeed in exact recovery in the model of \cite{ABBS14,Hajek2016}.

We conclude this section by showing that the error rate of the spectral algorithm is close to that of the best possible estimator. 
For an estimator $\hsig$, we define the error rate as
\begin{eq}\label{eq:distance-defn}
\err(\hsig) = \E\bigg[\min_{s\in \{-1,+1\}} \frac{1}{n} \sum_{i=1}^n \ind{\hsig(i) \neq s \true (i)}\bigg].
\end{eq}
To define the best estimator, we use a genie-aided approach. 
Suppose that we want to find the label of $u$. 
Now, in addition to the observed edge-labeled graph $G$, suppose a genie gives us $(\true(v))_{v\in [n]\setminus \{u\}}$. 
The genie-based estimator minimizes the probability of making an error given these observations. More precisely, the genie-based estimator $\best$ is given by 
\begin{eq}\label{eq:genie-defn}
\best(u)&:=\argmax_{r\in \{\pm 1\}} \PR(\true(u) = r \mid G, (\true(v))_{v\in [n]\setminus \{u\}}).
\end{eq}
The next result shows that the error rate of $\spec$ is within a $n^{o(1)}$ factor of the error rate of $\best$: 
\begin{theorem}\label{thm:spec-best-compared} For any fixed $t>0$, the spectral and genie-aided estimators satisfy
\[\err(\spec) = n^{o(1)}\err(\best) + O(n^{-3})= n^{- (1+o(1)) t/t_c(p,q)}+O(n^{-3}).\] 
\end{theorem}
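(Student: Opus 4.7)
I would prove the two equalities in sequence: first pin down the per-vertex error of $\best$ via a sharp Chernoff/Cram\'er computation (giving the $n^{-(1+o(1)) t/t_c(p,q)}$ rate), and then transfer that rate to $\spec$ via a tight entry-wise comparison between the top eigenvector $u_1$ and the ``genie vector'' $A\sigma_0$ (which incurs an $n^{o(1)}$ multiplicative loss and an $O(n^{-3})$ additive slack). The matching lower bound $\err(\spec) \ge \err(\best)$ is automatic since the genie has strictly more information than $\spec$.

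\textbf{Genie rate.} Expanding the LLR in \eqref{eq:genie-defn} edge by edge, the status of $\{u,v\}$ contributes $+\log(p/q)$ on \texttt{present}, $-\log((1-q)/(1-p))$ on \texttt{absent}, and $0$ on \texttt{censored}, each multiplied by $\sigma_0(v)$. The choice $y = \log((1-q)/(1-p))/\log(p/q)$ from \eqref{eq:y-choice} is precisely what collapses these contributions into a single weighted sum:
\begin{equation*}
    \log\frac{\Pr(\sigma_0(u) = +1 \mid G,\sigma_{0,-u})}{\Pr(\sigma_0(u) = -1 \mid G,\sigma_{0,-u})} \;=\; \log(p/q) \cdot (A\sigma_0)_u.
\end{equation*}
Hence (for $p>q$) $\best(u) = \sgn((A\sigma_0)_u)$ and, by symmetry, it suffices to estimate $\Pr((A\sigma_0)_u \le 0 \mid \sigma_0(u) = +1)$. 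A Chernoff bound with the optimal tilt $\lambda^\star = \tfrac12 \log(p/q)$ reduces the rate to
\begin{equation*}
    p e^{-\lambda^\star} + q e^{\lambda^\star} + (1-p) e^{y\lambda^\star} + (1-q) e^{-y\lambda^\star} - 2 \;=\; 2\sqrt{pq} + 2\sqrt{(1-p)(1-q)} - 2 \;=\; -\tfrac{2}{t_c(p,q)},
\end{equation*}
and a matching Bahadur--Rao / local-limit refinement of Cram\'er's theorem yields $\err(\best) = n^{-(1+o(1)) t/t_c(p,q)}$.

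\textbf{Spectral comparison and main obstacle.} I would then invoke the entry-wise eigenvector bound underlying Theorem~\ref{theorem:exact-recovery-spectral} (Proposition~\ref{lemma:corollary-3.1-abbe}). Using $\lambda_1 (u_1)_u = (Au_1)_u$ and the approximation $u_1 \approx \sigma_0/\sqrt{n}$ (up to sign), one obtains
\begin{equation*}
    (u_1)_u \;=\; \frac{(A\sigma_0)_u}{\lambda_1 \sqrt{n}} + \mathcal{E}_u,
\end{equation*}
where a leave-one-out analysis in the spirit of \cite{Abbe2020} controls $|\mathcal{E}_u| \le \lambda_1^{-1} n^{-1/2 - \delta}$ on an event of probability $1 - O(n^{-C})$ for $C$ as large as desired (and some fixed $\delta>0$). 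Consequently
\begin{equation*}
    \Pr(\spec(u) \ne \sigma_0(u)) \;\le\; \Pr\!\bigl(\sigma_0(u)(A\sigma_0)_u \le n^{1/2 - \delta}\bigr) + O(n^{-C}).
\end{equation*}
Since $\lambda^\star$ is $O(1)$ and the cumulant sum is $\Theta(\log n)$, shifting the Chernoff threshold by a polynomial in $n$ changes the exponent by only $o(1)$, so the first term is at most $n^{o(1)} \cdot \err(\best)$. Taking $C=3$ and averaging over $u$ yields both claimed equalities. The delicate point---and the main obstacle---is producing the entry-wise control on $u_1$ at precision sharp enough that a multiplicative loss of only $n^{o(1)}$ in front of a probability as small as $n^{-t/t_c}$ is acceptable; this requires adapting the leave-one-out machinery of \cite{Abbe2020} to the weighted ternary alphabet $\{+1,-y,0\}$, in particular obtaining spectral-norm and $2\to\infty$-norm control on $A - \mathbb{E}[A]$ and its row-deleted variants that respects the weighting.
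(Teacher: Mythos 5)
Your overall architecture matches the paper's: (i) reduce $\best$ to the sign of $(A\sigma_0)_u$ via the LLR (this is Proposition~\ref{prop:genie-expression-general} of the paper), (ii) compute the genie rate by a tilted exponential moment bound (the paper instead uses a Poisson local-limit approximation, Lemma~\ref{fact:stirling}, plus \cite[Theorem 3]{AS15}, but your Chernoff/Bahadur--Rao route gives the same $n^{-(1+o(1))t/t_c(p,q)}$), and (iii) transfer to $\spec$ via the $\ell_\infty$ eigenvector perturbation of \cite{Abbe2020}, i.e.\ Proposition~\ref{lemma:corollary-3.1-abbe}. So the high-level plan is correct.

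There is, however, a genuine quantitative error in step (iii). You posit $|\mathcal{E}_u|\le \lambda_1^{-1}n^{-1/2-\delta}$; that is far stronger than what \cite{Abbe2020} (and Proposition~\ref{lemma:corollary-3.1-abbe}) actually deliver, namely $\|u_1 - s\,Au_1^\star/\lambda_1^\star\|_\infty \lesssim n^{-1/2}/\log\log n$. Then you translate this into the event $\sigma_0(u)(A\sigma_0)_u\le n^{1/2-\delta}$ --- but starting from your own bound the threshold should be $\lambda_1\sqrt{n}\cdot\lambda_1^{-1}n^{-1/2-\delta}=n^{-\delta}$, not $n^{1/2-\delta}$. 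More importantly, the claim ``shifting the Chernoff threshold by a polynomial in $n$ changes the exponent by only $o(1)$'' is false. The score $\sigma_0(u)(A\sigma_0)_u$ concentrates at scale $\Theta(\log n)$, so the probability $\Pr(\sigma_0(u)(A\sigma_0)_u\le r_n)$ degrades drastically if $r_n$ is polynomially large --- in fact that event would be almost sure. What the argument actually requires, and what the paper supplies, is a threshold shift of $o(\log n)$: using the real perturbation bound one gets $\sigma_0(u)(A\sigma_0)_u\lesssim \lambda_1^\star/\log\log n = O(\log n/\log\log n)$ on the bad event, and plugging an $\varepsilon_n = o(1)$ into Lemma~\ref{lemma:supporting-3} shows this costs only an $n^{o(1)}$ factor, which is exactly what the theorem allows. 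If you keep the (unjustified and unnecessary) $n^{-1/2-\delta}$ estimate, fix the arithmetic to $n^{-\delta}$; if you use the paper's $n^{-1/2}/\log\log n$, the threshold is $O(\log n/\log\log n)$; in either case the key fact is ``$o(\log n)$ shift $\Rightarrow$ $o(1)$ change in exponent,'' not ``polynomial shift.''

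Two smaller remarks. First, the assertion ``$\err(\spec)\ge\err(\best)$ is automatic'' is not immediate because $\err(\cdot)$ contains a $\min$ over a global sign flip inside the expectation; the paper sidesteps this entirely by computing $\err(\best)$ exactly (Lemma~\ref{lem:genie}, which also includes a second-moment argument to control $\Pr(X>\tfrac12)$) and upper-bounding $\err(\spec)$ directly, so the lower bound is never invoked. Second, $\lambda_1(u_1)_u=(Au_1)_u$ is a tautology; the useful approximation is $u_1\approx A u_1^\star/\lambda_1^\star$ with the \emph{deterministic} $u_1^\star$, which is what the leave-one-out analysis actually produces.
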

Therefore, the expected number of misclassified vertex-labels by the spectral algorithm is at most~$n^{o(1)}$ times that of the genie estimator. 
In particular, if $t>t_c(p,q)$, then this expected number is $o(1)$ which is why exact recovery is achievable by the spectral algorithm.

\begin{remark}\normalfont If $p,q$ are unknown, then it is not difficult to estimate them. Indeed, if $E,T$ respectively denote the number of edges and triangles in the graph then $E = (1+o(1)) \frac{t n \log n}{4} (p+q)$ and $T =  (1+o(1)) \frac{t^3 \log^3 n}{8} \left(p q^2 + \frac{1}{3}p^3 \right)$ with probability tending to 1. We can use these to find consistent estimator $(\hat{p}, \hat{q})$ of $(p,q)$ and use the spectral algorithm with $\hat{y} = y(\hat{p},\hat{q})$. The conclusions of Theorem~\ref{theorem:exact-recovery-spectral} and Theorem~\ref{thm:spec-best-compared} still remain valid. 
\end{remark}

\subsection{Beating the spectral algorithm}\label{sec:beating-spectral-results}
Theorems~\ref{theorem:exact-recovery-spectral}~and~\ref{thm:spec-best-compared} strongly support  the success of the spectral algorithm. We next provide  results to show that there is room for improvement.
First, Theorem~\ref{thm:spec-best-compared} proves a $n^{o(1)}$ relative discrepancy  between the error rates of the spectral and the genie estimators, and therefore, the expected  difference between misclassified vertices under these two algorithms may grow with $n$. We prove that with an additional clean-up step, one can in fact get a $1+o(1)$ relative discrepancy  between the error rates. 
Second, we consider the case where the connection probabilities within Community 1 ($p_1$) and Community 2 ($p_2$) differ. 
In this case, we show that  the spectral algorithm  does not always work up to the information-theoretic threshold. 
Intuitively, if $p_1 \neq p_2$, then the relative information  provided by present and absent edges are different for vertices in different communities. For this reason, it is not possible to find a common choice of encoding $y$ which works well for vertices in both communities. We establish the failure of the spectral algorithm rigorously for $p_1 = 1-p_2$ and $q= \nicefrac{1}{2}$. 
On the other hand, an algorithm based on degrees and an additional clean-up step  turns out to be near-optimal for any $p_1,p_2,q$.

We start by introducing the  two-step estimator for which we need some notation.  
For a given vertex $u$ and $\sigma \in \cS$, let $D = D(\sigma, u) = (D_i(\sigma,u))_{i=1}^4$ be the degree profile where $D_1,D_2$ (resp.~$D_3,D_4$) are the number of present and absent edges to vertices in Community 1 (resp.~Community 2). Note that $D(\sigma,u)$ depends only on $G$ and $(\sigma(v))_{v\in [n]\setminus \{u\}}$.
Define $\Gamma (u,\sigma, p_1,p_2,q)$ by
\begin{eq}\label{eq:genie-lin-comb}
\Gamma (u,\sigma, p_1,p_2,q) &= D_1(\sigma,u) \log \frac{p_1}{q} + D_2(\sigma,u) \log \frac{1-p_1}{1-q} + D_3(\sigma,u) \log \frac{q}{p_2} + D_4(\sigma,u) \log \frac{1-q}{1-p_2} 
\end{eq}

\begin{definition}[Two-step estimator] 
Given an initial estimator $\hsig$, a two-step estimator $\hX(\hsig)$ is computed as follows: 
\begin{eq}\label{best:est-sym}
\hX (\hsig,u) = 
\begin{cases}
+1 &\quad \text{ if }  \Gamma (u,\hsig, p_1,p_2,q) \geq 0,\\
-1 &\quad \text{ otherwise.} 
\end{cases}
\end{eq}
\end{definition}
The function $\Gamma(\cdot)$ is designed to mimic the workings of the genie estimator. In fact, we will later show that $\best (u) = +1$ if and only if $\Gamma (u,\true, p_1,p_2,q) \geq 0$ (see Proposition~\ref{prop:genie-expression-general}). Thus the two-step estimator treats the initial estimator as a proxy of the true community assignment and then does the same procedure as the genie estimator. 
The following result states that the two-step version of the spectral estimator has a sharper error rate in the symmetric case.
\begin{theorem}\label{theorem:two-stage-spectral}
If $p_1= p_2 =p$ and $p\neq q$, then for any $t > 0$, 
\[\err(\hat{X}(\spec) ) =  (1+o(1))\err(\best) + o(1/n).\]
\end{theorem}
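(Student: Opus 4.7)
The plan is to compare $\hX(\spec)$ to $\best$ pointwise and show that the two agree with very high probability. After fixing a global sign of $\spec$ to align with $\true$ (an event of probability $1-o(1)$, contributing an $o(1/n)$ correction to $\err(\hX(\spec))$), vertex symmetry gives
\[\err(\hX(\spec)) - \err(\best) \le \PR(\hX(\spec,1) \neq \best(1)) + o(1/n).\]
From Proposition~\ref{prop:genie-expression-general} we have $\best(u) = \sgn(\Gamma(u,\true,p,p,q))$, while $\hX(\spec,u) = \sgn(\Gamma(u,\spec,p,p,q))$ by construction. Writing $B = \{v : \spec(v) \neq \true(v)\}$ and $N_B(u)$ for the number of non-censored edges from $u$ to $B$, a direct expansion of \eqref{eq:genie-lin-comb} gives the Lipschitz-type bound
\[|\Gamma(u,\spec,p,p,q) - \Gamma(u,\true,p,p,q)| \le C\,N_B(u),\]
with $C := 2\max(|\log(p/q)|,\,|\log((1-p)/(1-q))|)$. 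Hence $\{\hX(\spec,1)\neq\best(1)\} \subseteq \{|\Gamma(1,\true,p,p,q)|\le C\,N_B(1)\} \cap \{N_B(1)\ge 1\}$, and it remains to show this intersection has probability $o(\err(\best))+o(1/n)$.

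To decouple $\Gamma(1,\true)$ from $B$, I would use a leave-one-out argument. Let $\spec_{-1}$ be the spectral estimator computed on the subgraph induced on $[n]\setminus\{1\}$ and $B' := \{v\neq 1 : \spec_{-1}(v)\neq \true(v)\}$. Reusing the entrywise eigenvector perturbation (Proposition~\ref{lemma:corollary-3.1-abbe}) applied to the rank-$2$ perturbation of $A$ obtained by zeroing out row/column $1$, one can show that $\spec(v) = \spec_{-1}(v)$ for every $v\neq 1$ on an event of probability at least $1-o(\err(\best)) - o(1/n)$. On this event $B\cap([n]\setminus\{1\}) = B'$, and $B'$ is a function of $G_{-1}$ and $\true\big|_{[n]\setminus\{1\}}$, hence conditionally independent of the edge variables $(X_{1v})_{v\neq 1}$ incident to vertex $1$ given $\true$.

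Conditioning on $\true$ and $G_{-1}$, split $\Gamma(1,\true,p,p,q) = \Gamma^{B'} + \Gamma^{\overline{B'}}$ according to whether the other endpoint of the edge lies in $B'$. Then $(\Gamma^{B'}, N_{B'}(1))$ depends only on $\{X_{1v}\}_{v\in B'}$ while $\Gamma^{\overline{B'}}$ depends only on the independent family $\{X_{1v}\}_{v\notin B'\cup\{1\}}$, and $|\Gamma^{B'}|\le C'|B'|$ for a constant $C'$. Hence the bad event is contained in $\{|\Gamma^{\overline{B'}}|\le Ck + C'|B'|\}\cap\{N_{B'}(1)=k\}$ for some $k\ge 1$, a product event across the two independent families. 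A sharp local-CLT/Bahadur--Rao estimate---a refinement of the likelihood-ratio large deviation behind Theorem~\ref{theorem:impossibility}---yields, uniformly in the conditioning,
\[\PR\bigl(|\Gamma^{\overline{B'}}| \le M \,\big|\, \true,G_{-1}\bigr) \le \Bigl(\tfrac{M}{\sqrt{\log n}} + O(1)\Bigr)(1+o(1))\err(\best), \qquad M = O(\log n).\]
Multiplying by $\PR(N_{B'}(1)=k\mid B') = \binom{|B'|}{k}\alpha^k(1-\alpha)^{|B'|-k}$, summing over $k\ge 1$, and taking expectation over $\true, G_{-1}$ with control of the concentration of $|B'|$ around $n\err(\spec)$, I expect to obtain
\[\PR\bigl(|\Gamma(1,\true,p,p,q)|\le C\,N_B(1),\,N_B(1)\ge 1\bigr) \le O\bigl(\log n\cdot\err(\spec)\bigr)\cdot\err(\best) + o(1/n).\]
Substituting $\err(\spec) \le n^{o(1)}\err(\best) + O(n^{-3})$ from Theorem~\ref{thm:spec-best-compared} and using $\err(\best)\to 0$ for any fixed $t>0$ makes the first term $o(\err(\best))$, completing the proof.

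The principal obstacle is \emph{sharpness} at two steps: the leave-one-out stability must hold up to probability $o(\err(\best))$ (not merely $o(1)$) so that its failure does not dominate, and the LDP for $\Gamma^{\overline{B'}}$ has to yield a $(1+o(1))$-accurate prefactor rather than the $n^{o(1)}$ slack available in Theorem~\ref{thm:spec-best-compared}. Both ingredients are refinements of tools already developed in the paper (entrywise eigenvector perturbation, and the likelihood-ratio LDP for the impossibility result), so the main challenge is careful bookkeeping together with the fact that $\alpha\cdot|B'|$ is $o(1)$ for any $t>0$ (because $\err(\spec) = o(1/\log n)$), which keeps the two-step perturbation within the regime where the $(1+o(1))$ comparison is possible.
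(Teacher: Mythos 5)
Your overall strategy---show that $\hX(\spec,u)$ and $\best(u)$ agree with probability $1-o(\err(\best))-o(1/n)$, by observing that they can disagree only when vertex~$u$ has a misclassified revealed neighbor and $\Gamma(u,\true,p,p,q)$ is small---is the same strategy the paper executes, organized there via the ``good estimator'' abstraction (Definition~\ref{defn:good-estimator}, Theorem~\ref{thm:two-step}, Lemma~\ref{lem:spec-bad}). Your Lipschitz bound $|\Gamma(u,\spec)-\Gamma(u,\true)|\le C\,N_B(u)$ plays the same role as the paper's $\|d-d'\|_1\le L$ argument, and your observation that the $\alpha$-factor multiplying $\PR(N_{B'}(1)\ge 1)$ provides the $n^{-\varepsilon}$ savings parallels Definition~\ref{defn:good-estimator}\ref{good-estimator-3}. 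However, the crucial decoupling step is done differently, and your version has a genuine gap.

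The gap is the leave-one-out stability claim, ``$\spec(v)=\spec_{-1}(v)$ for every $v\neq 1$ with probability $1-o(\err(\best))-o(1/n)$.'' This is not achievable for general fixed $t>0$. A vertex $v$ can have $\spec(v)\neq\spec_{-1}(v)$ whenever $(u_1)_v$ is within $O(1/(\sqrt n \log\log n))$ of zero, which occurs with probability roughly $n^{-t/t_c(1+o(1))}\asymp\err(\best)$ per vertex; a union bound over all $v$ (or even just over $v\in N(1)$, which has size $\Theta(\log n)$) gives a failure probability of order $\log n\cdot\err(\best)$ or worse---not $o(\err(\best))$. The paper sidesteps this entirely: rather than requiring pointwise stability of the \emph{eigenvector-based} estimator under removing vertex $1$, it passes through the signed-degree functional $D_V(j)$ (Section~\ref{sec:good-estimators}). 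The containment ``$\spec(j)$ misclassifies $\Rightarrow D_\emptyset(j)\le\varepsilon\log n \Rightarrow D_{\{i,j\}}(j)\le 2\varepsilon\log n$'' makes the bad event a function of edges disjoint from those incident to vertex $i$, with no need to compare $\spec$ to a leave-one-out copy of itself. You could repair your argument by replacing the leave-one-out of $\spec$ with this $D_V$-level decoupling, which is exactly the content of Definition~\ref{defn:good-estimator}\ref{good-estimator-3}.

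Two smaller notes. First, $|\Gamma^{B'}|\le C'k$ on the event $N_{B'}(1)=k$ (the score only sees revealed edges), not $C'|B'|$; the latter would make $M$ polynomially large. Second, the sharp Bahadur--Rao-type prefactor you aim for is unnecessary: a crude polylogarithmic-slack bound $\PR(|\Gamma^{\overline{B'}}|\le M)\le \mathrm{poly}(M)\,\log^{O(M)}(n)\cdot\err(\best)$, obtained exactly as in the paper's ratio bound \eqref{prob-ratio}, is more than enough because the leading $\alpha|B'|\approx t\log(n)\,\err(\spec)$ factor from $\PR(N_{B'}(1)\ge 1)$ already kills any polylogarithmic factor. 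Chasing a $(1+o(1))$-sharp local CLT here is extra work you do not need.
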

Next, we show that, for $p_1\neq p_2$, the two-step estimator based on degrees in the observed graph has similar strong recovery guarantees. Let $\deg(i)$ be the number of present edges incident to vertex~$i$. Define the degree-based estimator to be
\[\degree(i) = \sgn\left(\deg(i) - \frac{t \log(n)}{4}(p_1 + p_2 + 2q )\right).\] 
Let
 $c_1 = (p_1, 1-p_1, q, 1-q)$, and  $c_2 = (q,  1-q, p_2, 1-p_2)$ and define
\begin{eq}\label{defn:CH-distance}
t_c(p_1,p_2,q) = \left[1 - \frac{1}{2} \min_{x \in [0,1]} \sum_i(c_1)_i^x(c_2)_i^{1-x} \right]^{-1}.
\end{eq}
When $p_1 = p_2 = p$, then it is elementary to check that the minimum in \eqref{defn:CH-distance} is attained for $x = 1/2$ and therefore 
\begin{eq}\label{eq:critical-vals-compare}
t_c(p,p,q) = t_c(p,q),
\end{eq}
where $t_c(p,q)$ is given by \eqref{threshold-defn}.
We next provide success guarantees for $\hX(\degree)$. 
\begin{theorem}\label{theorem:two-stage-degree}
 If $p_1, p_2, q \in (0,1)$ are distinct, then for any $t > 0$,
\[\err(\hat{X}(\degree) ) =  (1+o(1))\err(\best) + o(1/n).\]
Furthermore, if $t > t_c(p_1,p_2,q)$, then $\hat{X}\left(\degree\right)$ achieves exact recovery. 
\end{theorem}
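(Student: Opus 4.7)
By the genie characterization (Proposition~\ref{prop:genie-expression-general}), $\best(u) = \sgn\Gamma(u,\true) = \hX(\true,u)$. A union bound gives
\[
n\bigl(\err(\hX(\degree)) - \err(\best)\bigr) \le \sum_u \mathbb{P}\bigl(\best(u)=\true(u),\ \hX(\degree,u)\ne\best(u)\bigr),
\]
so the task reduces to showing the right-hand side is $o(n\,\err(\best))+o(1)$. The first ingredient is near-exact recovery by $\degree$: since $p_1,p_2,q$ are distinct, the expected degrees $\tfrac{t\log n}{2}(p_i + q)$ of Community-$i$ vertices are separated from the threshold $\tfrac{t\log n}{4}(p_1+p_2+2q)$ by $\Theta(\log n)$, so a Chernoff bound yields $\mathbb{P}(\degree(v)\ne\true(v)) \le n^{-c}$ for some $c = c(p_1,p_2,q,t) > 0$. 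Hence the number of revealed ``bad'' neighbors $N_u := |\{v \in V_{\mathrm{err}} : \{u,v\}\text{ revealed}\}|$ satisfies $\mathbb{E} N_u \le \alpha\,\mathbb{E}|V_{\mathrm{err}}| = O(n^{-c}\log n) = o(1)$.

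The second ingredient is a Lipschitz bound: flipping one vertex's label in \eqref{eq:genie-lin-comb} changes $\Gamma$ by at most a constant $C = C(p_1,p_2,q)$, so $|\Gamma(u,\degree) - \Gamma(u,\true)| \le C N_u$. On $\{\best(u) = \true(u),\ \hX(\degree,u)\ne\best(u)\}$, this forces $\true(u)\Gamma(u,\true) \in (0, CN_u]$. The subtlety is that $\Gamma(u,\true)$ and $N_u$ share randomness via the edges incident to $u$; I would address this by a leave-one-out device. Define $V_{\mathrm{err}}^{(-u)}$ by computing each $\degree(v)$ without the contribution of the edge $\{u,v\}$; then $V_{\mathrm{err}}^{(-u)}$ is independent of edges incident to $u$, while $V_{\mathrm{err}}\triangle V_{\mathrm{err}}^{(-u)}$ consists only of revealed neighbors of $u$ whose degree sits $O(1)$ from the threshold---a lower-order set. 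Split $\Gamma(u,\true) = \Gamma^{(+)}(u) + \Gamma^{(-)}(u)$ with $\Gamma^{(\pm)}$ summing the per-edge LLRs over $v\notin V_{\mathrm{err}}^{(-u)}$ (resp.\ $v \in V_{\mathrm{err}}^{(-u)}$); conditional on $V_{\mathrm{err}}^{(-u)}$ these are independent, $|\Gamma^{(-)}|\le C N_u^{(-u)}$, and $\Gamma^{(+)}$ is essentially a sum of $\Theta(\log n)$ i.i.d.\ per-edge LLRs whose left-tail behaves as $n^{-t/t_c(p_1,p_2,q)+o(1)}$ by the Chernoff--Hellinger formula in \eqref{defn:CH-distance}. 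A local-limit-type refinement of the Chernoff bound gives
\[
\mathbb{P}\bigl(\true(u)\Gamma^{(+)}(u) \in (0,a]\bigr) \le C'\, a \cdot \mathbb{P}(\best(u)\ne\true(u))\cdot(1+o(1))
\]
for $a = o(\log n)$. Substituting $a = C(N_u^{(-u)}+1)$, taking expectations, and using $\mathbb{E} N_u^{(-u)} = o(1)$ yields $\mathbb{P}(\hX(\degree,u)\ne\best(u),\ \best(u)=\true(u)) = o(\mathbb{P}(\best(u)\ne\true(u))) + o(n^{-2})$. Summing over $u$ gives the stated identity, and $t>t_c(p_1,p_2,q)$ then implies $\err(\best) = o(1/n)$, whence exact recovery follows from $n\,\err(\hX(\degree)) = o(1)$ via Markov.

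The principal obstacle is attaining the sharp $(1+o(1))$ multiplicative factor, rather than the coarser $n^{o(1)}$ factor that is sufficient for plain exact recovery---the naive bound $\mathbb{P}(\hX(\degree,u)\ne\best(u))\le \mathbb{P}(N_u\ge 1) \le n^{-c+o(1)}$ is too lossy whenever $t/t_c(p_1,p_2,q) < c$. Sharpness rests on two pillars: a local-CLT-style refinement ensuring the ``density'' of $\Gamma^{(+)}$ near $0$ is $O(1/\sqrt{\log n})$ times the left-tail mass (so an event of width $O(1)$ boosts the tail only by a bounded factor, subsequently damped by the smallness of $\mathbb{E} N_u^{(-u)}$), and the leave-one-out decoupling which renders $\Gamma^{(+)}$ asymptotically independent of $N_u^{(-u)}$. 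The most delicate piece will be the bookkeeping showing that the boundary-vertex discrepancy $V_{\mathrm{err}}\triangle V_{\mathrm{err}}^{(-u)}$ injects only lower-order corrections into $\Gamma$.
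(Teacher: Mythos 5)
Your high-level strategy---comparing $\hX(\degree,u)$ with $\best(u)$ through the Lipschitz property $|\Gamma(u,\degree)-\Gamma(u,\true)| \le C N_u$, then controlling the probability that $\true(u)\Gamma(u,\true)$ lands in a small window $(0,CN_u]$ near the decision boundary---is the same intuition that drives the paper's argument. However, the technical implementation diverges, and the step you flag as the crux contains a genuine gap.

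The local-CLT claim, $\mathbb{P}(\true(u)\Gamma^{(+)}(u)\in(0,a]) \le C'a\cdot\mathbb{P}(\best(u)\neq\true(u))(1+o(1))$ uniformly for $a=o(\log n)$ with a constant $C'$, is false as stated. In the large-deviation regime the left-tail density of $\Gamma$ near $0$ decays with a tilt parameter $\lambda^\star=\Theta(1)$, so $\mathbb{P}(\Gamma\in(0,a])$ scales like $(e^{\lambda^\star a}-1)\cdot\mathbb{P}(\Gamma\le 0)$ rather than $a\cdot\mathbb{P}(\Gamma\le 0)$; the linear-in-$a$ bound can only hold for $a=O(1)$, not $a=o(\log n)$. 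One can salvage the argument by truncating $N_u^{(-u)}$ to a constant $L$ (using the super-polynomial tail of a $\mathrm{Bin}(\mathrm{polylog}(n),n^{-c})$-type variable to absorb the large-$a$ contribution), but this is precisely the extra bookkeeping you have not supplied. The paper instead avoids the scalar $\Gamma$ entirely: its key Lemma~\ref{lem:spec-bad} works directly with degree profiles and uses only the elementary observation \eqref{prob-ratio} that profiles at $\ell_1$-distance $\le L$ have probabilities differing by at most a polylogarithmic factor (a Poisson-ratio computation), combined with the conditional estimate that given $D(\true,u)=d$, the probability of a misclassified revealed neighbor is $O(n^{-\varepsilon})$ (Definition~\ref{defn:good-estimator}\ref{good-estimator-3}). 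This sidesteps both the lattice-structure subtleties of $\Gamma$ and the need for any local limit theorem; the polylog loss is then killed by the $n^{-\varepsilon}$ factor. Your leave-one-out device aims at the same decoupling as the paper's conditioning on $D(\true,u)=d$, but the discrepancy set $V_{\mathrm{err}}\triangle V_{\mathrm{err}}^{(-u)}$ you acknowledge as ``the most delicate piece'' is exactly what the paper's conditional formulation renders moot. Finally, a minor point: you write that the naive bound fails ``whenever $t/t_c < c$,'' but the inequality should be reversed---the naive bound is adequate precisely when $t/t_c < c$ (and also when $t/t_c > 1$ so long as $c>1$) and fails in the intermediate regime.
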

\begin{remark} \normalfont
If $p_1$, $p_2$, and $q$ are unknown, then one can estimate them as follows. First, classify each vertex according to whether its degree is above average. 
This procedure will result in an estimator $\hsig$ with at most $n^{1-\varepsilon}$ errors on average, i.e., $\err(\hsig) = O(n^{-\varepsilon})$ for some $\varepsilon > 0$. 
Then we can estimate the parameters by counting revealed edges and non-edges between and within the estimated communities. 
\end{remark}
If $p_1 = p_2$, then $\degree$ cannot achieve spectral recovery. In this sense, Theorem~\ref{theorem:two-stage-spectral} and Theorem~\ref{theorem:two-stage-degree} are complementary. We next describe the failure of the spectral algorithm for $p_1 = 1-p_2$ and $q= \nicefrac{1}{2}$. 
Let us start by defining a  version of the spectral algorithm which makes decisions based on arbitrary linear combinations of the top two eigenvectors of some encoding matrix. 
\begin{definition}
\normalfont 
Given an encoding parameter $y\in \R$, threshold $r\in \R$ and constants $\gamma_1,\gamma_2\in \R$, let $A$ be the signed adjacency matrix with entries $A_{ij} \in \{ -y, 0 , 1\}$. Let $u_1$ and $u_2$ be the two top eigenvectors of $A$. Then the spectral algorithm $\mathrm{Spectral} (y,r,\gamma_1,\gamma_2)$ outputs the estimator 
\begin{eq} \label{eq:spec-arbitrary}
\hsig (i) = \sgn(\gamma_1(u_1)_i + \gamma_2 (u_2)_i -r).
\end{eq}
\end{definition}
In other words, the spectral algorithm decides community assignments using a thresholding on a linear combination of the top two eigenvectors of some encoding matrix. Only the top two eigenvectors are included since the eigenvectors of $A$ behave like noisy versions of the eigenvectors of the rank-$2$ matrix $A^{\star}$. The following result states that even this more general algorithm fails in the \emph{antisymmetric} CSBM for $t$ sufficiently close to the recovery threshold. 

\begin{theorem}\label{theorem:antisymmetric-spectral-failure}
Let $p_1 = p = 1-p_2$ and $q = 1/2$. There exists $\delta > 0$ such that if $t<t_c(p,1-p,1/2)+ \delta$, then, for any choice of $y,r,\gamma_1,\gamma_2$,  the algorithm $\mathrm{Spectral} (y,r,\gamma_1,\gamma_2)$ fails to achieve exact recovery with probability $1-o(1)$. 
\end{theorem}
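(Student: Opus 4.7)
The plan is to reduce any spectral decision rule of the form $\sgn(\gamma_1 (u_1)_i + \gamma_2 (u_2)_i - r)$ to a constrained linear classifier on vertex $i$'s local degree profile, and then to show that in the antisymmetric case this constrained family is strictly weaker than the Bayes-optimal (genie) rule. The resulting strict gap between Chernoff-Hellinger rates will force at least one vertex to be misclassified with probability $1-o(1)$ whenever $t$ lies in a right neighbourhood of $t_c(p,1-p,1/2)$.

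Applying the entrywise eigenvector analysis of Proposition~\ref{lemma:corollary-3.1-abbe} to $A$, and writing $v_1,v_2$ for the two leading eigenvectors of $\mathbb{E}[A]$, each $v_k$ is piecewise constant on the two communities, so the first-order approximation $(u_k)_i \approx \lambda_k^{-1}\sum_j A_{ij}(v_k)_j$ expresses $(u_k)_i$ as a linear combination of the signed per-community statistics
\[ S_1(i) := E_1(i) - y N_1(i), \qquad S_2(i) := E_2(i) - y N_2(i), \]
where $E_j(i), N_j(i)$ count present and absent edges from $i$ to Community $j$. Consequently, every spectral test $\gamma_1 (u_1)_i + \gamma_2 (u_2)_i \geq r$ reduces, up to additive $o(1)$ error, to a threshold of the form $\kappa_1 S_1(i) + \kappa_2 S_2(i) \geq \rho$; that is, a linear classifier in $(E_1,N_1,E_2,N_2)$ whose coefficient vector lies in the three-parameter family
\[ \mathcal{F} = \{(\kappa_1,\, -y\kappa_1,\, \kappa_2,\, -y\kappa_2) : y,\kappa_1,\kappa_2 \in \mathbb{R}\}. \]

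Specialising \eqref{eq:genie-lin-comb} to $p_1=p$, $p_2=1-p$, $q=1/2$ gives the genie coefficient vector $c^\star = (\log(2p),\, \log(2(1-p)),\, -\log(2(1-p)),\, -\log(2p))$ on $(E_1,N_1,E_2,N_2)$. For a vector in $\mathcal{F}$ to be proportional to $c^\star$ one needs both $-y = \log(2(1-p))/\log(2p)$ and $-y = \log(2p)/\log(2(1-p))$, forcing $(\log(2p))^2 = (\log(2(1-p)))^2$, hence $p = 1/2$. Since $p \neq 1/2$ in the antisymmetric case (otherwise the model carries no community signal), no $y$ aligns the spectral direction with the genie direction. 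The per-vertex misclassification probability of a linear rule $\sum c_i D_i \geq \rho$ obeys large-deviation asymptotics $n^{-t \cdot I(c,\rho) + o(1)}$, because $\alpha = t\log n/n$ makes each coordinate approximately Poisson with mean $\Theta(\log n)$; by \eqref{defn:CH-distance}, the Chernoff-Hellinger exponent $\sup_{c,\rho} I(c,\rho)$ equals $1/t_c(p,1-p,1/2)$ and is attained, up to positive scaling, uniquely at $c^\star$. A continuity and compactness argument, after projectivising the direction and restricting to the bounded region where the rate is finite, then yields $\sup_{\mathcal{F},\rho} I = 1/t_s$ for some $t_s > t_c$. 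Choosing $\delta = (t_s-t_c)/2$, for every $t \in (t_c,\, t_c+\delta)$ each vertex is misclassified with probability $\omega(1/n)$ under any rule from $\mathcal{F}$, and a Poisson/second-moment computation on the number of misclassified vertices (parallel to the impossibility proof of Theorem~\ref{theorem:impossibility}) yields at least one misclassification with probability $1-o(1)$. Data-dependent choices of $(y,r,\gamma_1,\gamma_2)$ are handled by an $\varepsilon$-net on a compact effective parameter region, supplemented by a union bound, since extreme or degenerate parameter values trivially fail.

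The principal obstacle is the strict inequality $t_s > t_c$: the reciprocal-ratio obstruction must be promoted from a qualitative mismatch to a uniform quantitative deficit of the Chernoff rate over the non-compact family $\mathcal{F}$. Ruling out degenerate limits in which $y \to \pm\infty$ or $(\kappa_1,\kappa_2)$ scale adversarially, and carefully accounting for the threshold $\rho$ (which cannot compensate for a misaligned direction), requires a detailed analysis of the rate function near the boundary of the parameter space. This compactification-plus-continuity step is the technical heart of the proof.
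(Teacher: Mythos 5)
Your high-level reduction matches the paper's: use the entrywise eigenvector bound to turn any rule $\sgn(\gamma_1(u_1)_i + \gamma_2(u_2)_i - r)$ into a linear threshold on the degree profile whose coefficient vector lies in the three-parameter family $\{(\kappa_1, -y\kappa_1, \kappa_2, -y\kappa_2)\}$, and you correctly compute the genie coefficient vector $c^\star = (\log(2p), \log(2(1-p)), -\log(2(1-p)), -\log(2p))$ and observe that the reciprocal-ratio obstruction forces $p=1/2$ for alignment. This reproduces the paper's structural insight. However, the proposal leaves a genuine gap exactly where it matters: you state that "this compactification-plus-continuity step is the technical heart of the proof" and do not carry it out. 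A uniform strict deficit $\sup_{\mathcal{F},\rho} I = 1/t_s < 1/t_c$ over the non-compact family (with $y$ and $(\kappa_1,\kappa_2)$ unbounded, and $\rho$ free) is not a straightforward compactness-plus-continuity fact — for instance, you must rule out sequences of linear rules whose Chernoff exponent approaches $1/t_c$ while the direction drifts toward degenerate boundary behaviour, and you must justify why a well-chosen $\rho$ cannot compensate. You correctly flag these as obstacles, but flagging them does not resolve them.

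The paper sidesteps the compactification issue by being fully constructive. Using the Poisson approximation (Lemma \ref{fact:stirling}) it computes the probability of a degree profile $\log(n)(c_1,c_2,c_3,c_4)$, shows that the cluster of profiles near $\log(n)\bigl(\sqrt{p/8}\,t_0,\,\sqrt{(1-p)/8}\,t_0,\,\sqrt{(1-p)/8}\,t_0,\,\sqrt{p/8}\,t_0\bigr)$ has probability $\omega(1/n)$, hence occurs for $\Omega(n^c)$ vertices in each community w.h.p. Correctly classifying those "near-threshold" profiles already forces the threshold $r$ to satisfy an explicit asymptotic identity \eqref{eq:claimed-threshold} and forces $y, z_1z_2 > 0$. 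Then, because $4p(1-p)<1$, the paper exhibits two additional profiles \eqref{eq:profile-1}, \eqref{eq:profile-2} — one in each community, both occurring w.h.p. — that together force $y > \delta/\delta' - o(1)$ and $y < \delta'/\delta + o(1)$ with $\delta > \delta' > 0$, a contradiction for large $n$. Because these constraints must hold simultaneously for \emph{any} single choice of $(y, z_1, z_2, r)$, the paper needs no $\varepsilon$-net over data-dependent parameters; that handling in your proposal is another piece of unfinished machinery. So your proposal is a correct blueprint of the reduction and the obstruction, but it outsources the quantitative step — which is where the paper's argument actually lives.
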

\begin{figure}
    \centering
    \includegraphics[scale=.4]{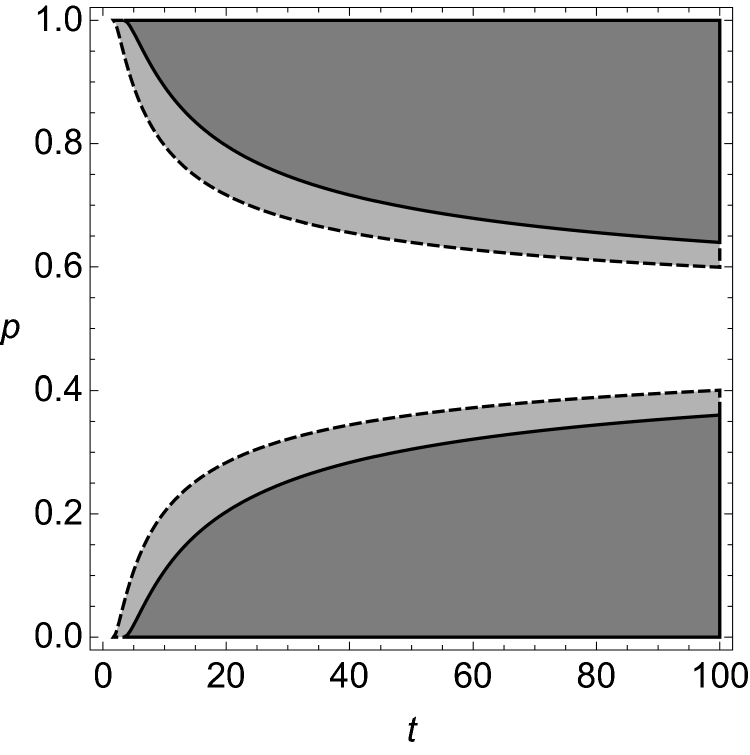}
    \caption{The estimator $\hat{X}(\degree)$ achieves exact recovery in the gray regions, corresponding to $t > t_c(p,1-p,\nicefrac{1}{2})$. No spectral algorithm can achieve exact recovery in a neighborhood of the dotted lines, represented schematically by the light gray regions.
    }
    \label{fig:spec-fail}
\end{figure}
Theorems~\ref{theorem:two-stage-degree} and~\ref{theorem:antisymmetric-spectral-failure} together show that there is a range of values of $t$ where the spectral algorithm fails in exact recovery, but $\hX(\degree)$ succeeds (see Figure~\ref{fig:spec-fail}). In other words, there is a strong separation between the spectral algorithm and the two-step procedure based on degrees. One can now see the failure of the spectral algorithm from a technical perspective.
In this regime, the top two eigenvectors of the encoding matrix $A$ are close to $Au_1^{\star}/\lambda_1^{\star}$ and $Au_2^{\star}/\lambda_2^{\star}$ respectively where $u_i^\star$ is the $i$-th largest eigenvector of the expectation matrix. If we imagine replacing $u_1$ and $u_2$ in \eqref{eq:spec-arbitrary} by their approximations, then the decision rule for the label of a vertex $u$ is given by the sign of $\sum_i z_i D_i -r_0$ for some coefficients $\{z_i\}$ and threshold $r_0$, where $D_i$ denotes the degree profile of $u$. 
Moreover, due to the choice of the encoding, the coefficients $(z_1,z_2,z_3,z_4)$ satisfy  $\frac{z_1}{z_2}= \frac{z_3}{z_4}= y$. 
The estimator that minimizes the error probability can also be shown to decide communities based on the sign of $\sum_i z_i' D_i >r'_0$, but in order to satisfy the additional condition $\frac{z_1'}{z_2'}= \frac{z_3'}{z_4'}= y$, one requires $y(p_1,q) = y(p_2,q)$, or $p_1 = p_2$. 
For this reason, the spectral algorithm is strictly less powerful than the best possible estimator and thus one cannot expect the spectral algorithm to work all the way up to the information theoretic threshold if $y(p_1,q) \neq y(p_2,q)$. Theorem~\ref{theorem:antisymmetric-spectral-failure} makes this intuition precise for $p_1=1-p_2$. 

\noindent {\bf Organization.}
The remainder of the paper is structured as follows. We start by setting up some preliminary notation in Section~\ref{sec:notation}. In Section~\ref{sec:impossibility}, we prove  impossibility for the exact recovery problem in Theorem~\ref{theorem:impossibility}. Section~\ref{sec:entrywise} is devoted to entrywise perturbation analysis of the largest eigenvector of $A$ and completing the proof of Theorem~\ref{theorem:exact-recovery-spectral}. The error analysis for the spectral algorithm and the genie estimator will be provided in Section~\ref{sec:error} and hence the proof of Theorem~\ref{thm:spec-best-compared} will be completed. In Section~\ref{sec:two-step}, we analyze the two-step estimator for a general class of initial estimators. This allows us to complete  proofs of Theorems~\ref{theorem:two-stage-spectral} and~\ref{theorem:two-stage-degree}. Finally, we conclude with a proof of Theorem~\ref{theorem:antisymmetric-spectral-failure} regarding failure of the spectral algorithm in Section~\ref{sec:failure}. We provide proof ideas at the beginnings of Sections \ref{sec:impossibility}-\ref{sec:failure}.

\section{Notation and preliminaries}\label{sec:notation}
Let $[n] = \{1, ,2, \dots, n\}$. We often use the Bachmann–Landau notation $o(1), O(1)$ etc. For two sequences $(a_n)_{n\geq 1}$ and $(b_n)_{n\geq 1}$, we write $a_n \asymp b_n$ as a shorthand for $\lim_{n\to \infty}\frac{a_n}{b_n} =1$. We write $a_n \sim b_n$ if $a_n$ and $b_n$ are asymptotically equivalent, namely $\lim_{n\to \infty}\frac{a_n}{b_n} \to c$ for some $c \neq 0$. Additionally, we write $a_n \approx b_n$ if these sequences differ by a polylogarithmic factor asymptotically, namely there exists some constant $c$ such that $a_n = O\left(b_n \log^c(n)\right)$ and $b_n = O\left(a_n \log^c(n)\right)$. For random variables $(X_n)_{n\geq 1}$, we write $X_n = \oP(1)$ as a shorthand for $X_n \to 0$ in probability.

For a vector $x \in \mathbb{R}^n$, we define $\Vert x \Vert_2 = (\sum_{i=1}^n x_i^2)^{1/2}$ and $\Vert x \Vert_{\infty} = \max_i |x_i|$. For a matrix $M \in \mathbb{R}^{n \times d}$, we use $M_{i \cdot}$ to refer to its $i$-th row, represented as a row vector. 
Given a matrix $M$, $\Vert M \Vert_2 = \max_{\Vert x \Vert_2 = 1} \Vert M x \Vert_2$ is the spectral norm, 
and $\Vert M \Vert_{2 \to \infty} = \max_i \Vert M_{i \cdot}\Vert_2$ is the matrix $2 \to \infty$ norm. 
We use the convention that $\log$ denotes natural logarithm, and write $\log^k(n)$ to mean $\left(\log(n)\right)^k$.
Finally, $\DKL(p\Vert q)$ refers to the Kullback--Leibler divergence of two Bernoulli random variables with parameters $p$ and $q$:
\[\DKL(p\Vert q) = p \log \left(\frac{p}{q}\right) + (1-p) \log \left( \frac{1-p}{1-q}\right). \]

Let $n_1(\true) = \left|\{v : \sigma_0(v) = +1 \}\right|$ and $n_2(\true) = \left|\{v : \sigma_0(v) = -1 \}\right|$.
Note that since $n_1(\true), n_2(\true)$ are marginally distributed as $\text{Bin}\left(n, \frac{1}{2}\right)$, we have that for all $\ve \in (0,1)$, 
\begin{equation}
\left|n_1 (\true) - \frac{n}{2} \right| \leq \ve n \text{~~and~~}   \left|n_2(\true) - \frac{n}{2} \right| \leq \ve n \label{eq:count-concentration}
\end{equation}
with probability at least $1 - 2\exp(-\ve^2 n/6 )$. We will often use \eqref{eq:count-concentration} with  $\ve = n^{-\frac{1}{3}}$. Additionally, let $N(u)$ be the number of vertices whose connections to $u$ are revealed: $N(u):=\{v: \{u,v\} \text{ is revealed}\}$. By \cite[Corollary 2.4]{JLR00} for $c>1$ 
\begin{equation}
\mathbb{P}\left(N(i) \geq \log^{c}(n) \right) \leq \exp\left(-\log^c(n)\right). \label{eq:neighbor-concentration} 
\end{equation}
for all sufficiently large $n$. We will use \eqref{eq:neighbor-concentration} with $c = \frac{5}{4}$ or $c=2$.

The following Poisson approximation will be used throughout. 
\begin{lemma}
\label{fact:stirling}
Let $\{W_i\}_{i=1}^{m}$ be i.i.d.~from a distribution taking three values $a,b,c$ and $\mathbb{P}(W_i = a) = \alpha p$, $\mathbb{P}(W_i = b) = \alpha (1-p)$, and $\mathbb{P}(W_i = c) = 1- \alpha$. 
Let $N_x:= \#\{i: W_i =x\}$ for $x = a,b,c$. If $m_1, m_2 = o(\log^{3/2} n)$, $m = \frac{n}{2} (1+O(\log^{-2} n ))$ and $\alpha = t \log n /n$, then 
\begin{align*}
    M(m, m_1,m_2,p):=\PR(N_a = m_1, N_b = m_2) \asymp P\Big(\frac{t p \log n }{2}; m_1\Big)P\Big(\frac{t (1-p) \log n }{2}; m_2\Big),
\end{align*}
where $P(\lambda; m)$ is the probability that a $\mathrm{Poisson}(\lambda)$ random variable takes value $m$.
\end{lemma}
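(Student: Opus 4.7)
The plan is to compute the multinomial probability directly and then verify, term by term, that each factor matches the corresponding piece of the product of two Poisson densities up to a multiplicative $1+o(1)$. Writing out the joint distribution of $(N_a, N_b)$,
\begin{equation*}
M(m,m_1,m_2,\pi) = \frac{m!}{m_1!\,m_2!\,(m-m_1-m_2)!}\,(\alpha p)^{m_1}(\alpha(1-p))^{m_2}(1-\alpha)^{m-m_1-m_2},
\end{equation*}
whereas the target is
\begin{equation*}
P\bigl(\tfrac{tp\log n}{2};m_1\bigr)\,P\bigl(\tfrac{t(1-p)\log n}{2};m_2\bigr) = \frac{(tp\log n/2)^{m_1}}{m_1!}\cdot\frac{(t(1-p)\log n/2)^{m_2}}{m_2!}\cdot e^{-t\log n/2}.
\end{equation*}
So it suffices to check that the ratio of the two expressions tends to $1$ under the hypotheses $m_1,m_2=o(\log^{3/2} n)$, $m=\tfrac{n}{2}(1+O(\log^{-2}n))$, and $\alpha=t\log n/n$.

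First, I would handle the falling factorial: $\frac{m!}{(m-m_1-m_2)!}=m^{m_1+m_2}\prod_{j=0}^{m_1+m_2-1}\bigl(1-\tfrac{j}{m}\bigr)$. Taking logs, the log of this product is bounded by $O((m_1+m_2)^2/m) = O(\log^{3}n/n) = o(1)$, so $\frac{m!}{(m-m_1-m_2)!}\,(\alpha p)^{m_1}(\alpha(1-p))^{m_2} = (m\alpha p)^{m_1}(m\alpha(1-p))^{m_2}(1+o(1))$. Substituting $m\alpha = \tfrac{t\log n}{2}(1+O(\log^{-2} n))$ and using $m_i = o(\log^{3/2}n)$ gives $(1+O(\log^{-2}n))^{m_1+m_2} = 1+o(1)$, so this factor becomes $(tp\log n/2)^{m_1}(t(1-p)\log n/2)^{m_2}(1+o(1))$, matching the Poisson numerators.

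The remaining piece is $(1-\alpha)^{m-m_1-m_2}$, which I would rewrite as $(1-\alpha)^m\cdot(1-\alpha)^{-(m_1+m_2)}$. Since $\alpha(m_1+m_2)=O(\log n/n)\cdot o(\log^{3/2}n)=o(1)$, the second factor contributes $1+o(1)$. For the first, $m\log(1-\alpha) = -m\alpha - \tfrac{m\alpha^2}{2} - O(m\alpha^3)$, and $m\alpha^2 = O(\log^2 n/n) = o(1)$, so $(1-\alpha)^m = e^{-m\alpha}(1+o(1)) = e^{-t\log n/2}(1+o(1))$, where I've absorbed the $O(\log^{-2}n)$ correction from $m\alpha = \tfrac{t\log n}{2}(1+O(\log^{-2}n))$ into the $1+o(1)$ factor since that correction contributes $e^{O(\log^{-1}n)} = 1+o(1)$.

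Collecting the three pieces yields $M(m,m_1,m_2,\pi) = P(\tfrac{tp\log n}{2};m_1)P(\tfrac{t(1-p)\log n}{2};m_2)(1+o(1))$, which is the claim. The only mildly delicate point is ensuring the several multiplicative error bounds are uniform in $(m_1,m_2)$ over the whole regime $m_1,m_2=o(\log^{3/2}n)$; because each error estimate above is worst-case at the upper end $m_1+m_2=\Theta(\log^{3/2}n)$ and still gives $o(1)$ there, uniformity is automatic. No delicate Stirling bookkeeping is needed beyond the elementary $\log(1-x)$ expansion, because the factorials $m_1!$ and $m_2!$ appear identically on both sides and cancel exactly.
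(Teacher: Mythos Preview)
Your proof is correct and follows essentially the same structure as the paper's: write out the multinomial probability, show the falling factorial contributes $(n/2)^{m_1+m_2}(1+o(1))$, and show $(1-\alpha)^{m-m_1-m_2}\asymp e^{-t\log n/2}$. The only difference is that the paper invokes Stirling's formula for $m!/(m-m_1-m_2)!$, whereas you write the falling factorial as $m^{m_1+m_2}\prod_j(1-j/m)$ and bound the log of the product directly; your route is slightly more elementary and avoids any bookkeeping with Stirling, but the two arguments are otherwise identical.
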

The proof follows using Stirling's approximation; we provide the details in Appendix~\ref{sec:appendix-2}.
\section{Impossibility of exact recovery}\label{sec:impossibility}
In this section, we give the proof of Theorem~\ref{theorem:impossibility}. We first identify a sufficient condition under which any algorithm fails to achieve exact recovery (Proposition \ref{proposition:equiprobable}). The condition captures the idea that there are some graph instances that cannot be labeled correctly with confidence since there are multiple suitable labelings for these instances. If these graph instances are likely to occur, then the overall failure probability can be lower-bounded. Theorem \ref{theorem:impossibility} is then proven by finding a set of graphs that are difficult to label correctly.

\subsection{Sufficient condition for impossibility.}
Recall that $\cS = \{\pm 1\}^n$ is the space of possible values of $\sigma$. 
We write $g$ as a generic notation to denote the observed value of the edge-labeled graph $G$ consisting of present, absent and censored edges. 
Also, let $\cG$ be the space of all possible values of $G$. 
We write $\PR(\cdot | \sigma)$ to denote the probability distribution of $\SBM(p,q,t)$ when the community assignments are given by~$\sigma$.

Since
\begin{align*}
    \PR(\hsig \neq \sigma_0) = \sum_{g\in \cG} \PR(\hsig \neq \sigma_0 \mid G = g ) \PR(G = g),
\end{align*}
the estimator that maximizes the posterior probability $\PR(\hsig = \sigma_0 \mid G=g )$ for all $g\in \cG$ also minimizes the error probability $\PR(\hsig \neq \sigma_0)$. This  estimator is the Maximum A Posteriori (MAP) estimator. 
Thus, an optimal algorithm is devised by choosing uniformly at random  among all MAP estimates, and we denote the corresponding estimator  by $\map$. 
Next, using the fact that $\sigma_0$ is uniformly distributed on $\cS$, we must have $\mathbb{P}(\sigma_0 = \sigma | G=g) \propto \mathbb{P}( G=g| \sigma_0 = \sigma ) $. 
Then, 
\begin{align} \label{genie-MLE-equal}
\argmax_{\sigma} \mathbb{P}\left(\sigma_0 = \sigma \mid G=g\right) = \argmax_{\sigma} \mathbb{P}\left(G=g \mid \sigma_0 = \sigma \right),
\end{align}
i.e., the MAP estimator coincides with the Maximum Likelihood estimator. 

In light of this equivalence, the following result identifies a condition where the MAP estimator fails with a given probability.
\begin{proposition}\label{proposition:equiprobable}
Fix $\delta>0$. Suppose that there is  $\cG' \subset \cG $ with $\PR(G \in \cG' | \sigma_0)\geq \delta$ such that the following holds for any $g \in \cG'$: There are $k$ pairs of vertices $\{(u_i,v_i): i\in [k]\}$ with opposite community label such that if $\sigma'_0$ is obtained by swapping any one of the labels of $u_i$ and $v_i$, then  $\mathbb{P}(G = g | \sigma_0) = \mathbb{P}(G = g | \sigma_0')$. Then, conditionally on $\sigma_0$, 
the MAP estimator $\map$ fails in exact recovery with probability at least $\delta\big(1 - \frac{1}{k}\big)$.
\end{proposition}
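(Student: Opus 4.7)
My plan is the standard ``$m$ indistinguishable hypotheses'' pigeonhole argument: the hypothesis hands me $k+1$ labelings with identical likelihood at each $g \in \cG'$, and a single estimator output can match at most one of them up to sign, so any algorithm loses at least a $k/(k+1)$-fraction of the mass on $\cG'$.

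First I record the combinatorial ingredient. Write $\sigma_0^{(0)} := \true$ and let $\sigma_0^{(1)}(g), \ldots, \sigma_0^{(k)}(g)$ denote the $k$ one-swap labelings supplied by the hypothesis. Each $\sigma_0^{(i)}(g)$ differs from $\true$ in exactly two coordinates, so no two of the $k+1$ labelings are equal and (for $n\ge 5$) no two are antipodal; hence the $2(k+1)$ vectors $\{\pm \sigma_0^{(i)}(g)\}_{i=0}^{k}$ are pairwise distinct, and the single output $\hsig(g) \in \{\pm 1\}^n$ can land in at most one of these sign-classes. This gives, for every $g \in \cG'$,
\[
\sum_{i=0}^{k} \ind{\hsig(g) \in \{\pm \sigma_0^{(i)}(g)\}} \;\le\; 1.
\]
Multiplying by $\PR(G=g \mid \true)$, summing over $g \in \cG'$, and using the likelihood identity $\PR(G=g \mid \true) = \PR(G=g \mid \sigma_0^{(i)}(g))$ to relabel the $i$-th term pointwise in $g$, I obtain
\[
\sum_{i=0}^{k} S_i \;\le\; \PR(G \in \cG' \mid \true), \qquad S_i := \sum_{g \in \cG'} \PR(G=g \mid \true)\, \ind{\hsig(g) \in \{\pm \sigma_0^{(i)}(g)\}}.
\]
Setting $F_i := \PR(G \in \cG' \mid \true) - S_i$, this rearranges to $\sum_{i=0}^{k} F_i \ge k\cdot \PR(G \in \cG'\mid \true) \ge k\delta$.

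To finish, I need the specific term $F_0$, which is exactly $\PR_{\true}(\hsig(G) \notin \{\pm \true\},\ G \in \cG')$ and hence a lower bound on the full failure probability, to be at least $\delta(1-1/k)$. The key observation is that the likelihood-indistinguishability condition is symmetric among the $k+1$ hypotheses, so I can recast the problem under the uniform prior on $\{\pm 1\}^n$ (in which the $k+1$ labelings have identical posterior mass on $\{G=g\}$) and read off that the Bayes failure equals the average $\frac{1}{k+1}\sum_i F_i \ge \frac{k}{k+1}\delta \ge \delta(1-1/k)$; the same bound then applies to the individual hypothesis $\true$ because the prior treats all $k+1$ labelings symmetrically.

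The main obstacle is this last symmetrization: the pigeonhole identity $\sum_i F_i \ge k\delta$ is a one-line averaging argument, but converting the average into a per-hypothesis bound requires some care, since the alternative labelings $\sigma_0^{(i)}(g)$ are allowed to depend on the observation $g$. I would handle this by working in the Bayesian (uniform-prior) formulation described above, where the $k+1$ indistinguishable labelings are on the same footing by construction and the per-hypothesis statement transfers directly from the average.
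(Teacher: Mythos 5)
Your pigeonhole step is correct, but it only establishes the \emph{average} bound $\frac{1}{k+1}\sum_{i=0}^{k} F_i \geq \frac{k}{k+1}\delta$ over the $k+1$ hypotheses; passing from the average to the single term $F_0$ is, as you flag, the crux, and the proposed symmetrization does not close the gap. For an arbitrary estimator $\hsig$ the quantities $F_i$ are genuinely not symmetric in $i$. Consider $\hsig(g)=\true$ for every $g$: then $S_0=\PR(G\in\cG'\mid\true)$ and $S_i=0$ for $i\geq 1$, so $F_0=0$ while each $F_i$ with $i\geq 1$ equals $\PR(G\in\cG'\mid\true)$. Your pigeonhole identity is met with equality, yet $F_0$ vanishes. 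This same example shows the conclusion cannot hold verbatim for every estimator conditionally on a fixed $\sigma_0$; the statement is really about the Bayes-optimal rule, and the claim ``the per-hypothesis statement transfers directly from the average'' is precisely where the argument needs a new idea.

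The paper proves it by a different and more direct route that makes the transfer unnecessary. It first reduces to the MAP estimator, implemented with \emph{uniform tie-breaking} among posterior maximizers (any other rule has at least the same error, integrated over $\sigma_0$), and then analyzes that specific estimator pointwise in $g$: for $g\in\cG'$ the hypothesis supplies $k+1$ labelings with equal likelihood, hence equal posterior, so the uniform tie-break recovers the true one with conditional probability at most $\frac{1}{k+1}$, giving failure at least $1-\frac{1}{k}$ given $\{G\in\cG'\}$ and $\sigma_0$; multiplying by $\PR(G\in\cG'\mid\sigma_0)\geq\delta$ finishes. The decisive ingredient is the explicit tie-breaking of MAP, which automatically equidistributes among the $k+1$ indistinguishable hypotheses for the fixed $\sigma_0$ at hand, rather than any Fano-style averaging over hypotheses. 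To salvage your approach you would have to restrict $\hsig$ to be the MAP rule and argue directly that $S_0\leq\frac{1}{k+1}\PR(G\in\cG'\mid\true)$, at which point the pigeonhole machinery is no longer buying you anything and you have reproduced the paper's argument.
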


\begin{proof}
By our underlying condition, whenever $g\in \cG'$,  the true assignment is such that swapping one of the community assignment of one of the pairs among $\{(u_i,v_i): i\in [k]\}$ results in an equiprobable assignment. In that case, the algorithm is incorrect with probability at least $1- \frac{1}{k}$, due to \eqref{genie-MLE-equal}.
 Therefore, 
 \begin{eq}
 \PR(\map \neq \sigma_0 ~\big|~ \sigma_0) \geq \PR(\map \neq \sigma_0 ~\big|~ G \in \cG', \sigma_0) \PR(G \in \cG' ~\big|~ \sigma_0) \geq \delta \Big(1- \frac{1}{k}\Big),
 \end{eq}
 and the proof follows.
\end{proof}
\begin{remark}\label{remark:even-sizes} \normalfont 
Additionally, Proposition \ref{proposition:equiprobable} holds when $\sigma_0$ is an assignment with equal community sizes, i.e., $n_1(\true) = n_2(\true)$. In this case, observe that we can treat the assignment $\sigma_0$ as though it were chosen uniformly at random from all $\sigma$ satisfying $n_1(\sigma) = n_2(\sigma)$.
 To see this, consider applying a uniformly chosen random permutation to the vertices. The inference problem does not change; however, the MAP estimator again coincides with the Maximum Likelihood estimator, and one can use an identical argument as above.
\end{remark} 

\subsection{Proof of impossibility of exact recovery}
\begin{proof}\emph{(Proof of Theorem~\ref{theorem:impossibility}).}
Throughout the proof, we condition on $\sigma_0 \in \mathcal{S}$ such that $n_1(\true), n_2(\true) = (1+O(n^{-\nicefrac{1}{3}}))\frac{n}{2}$, which occurs with probability at least $1-2\exp(-n^{\nicefrac{1}{3}}/6)$ by \eqref{eq:count-concentration}. For convenience, we write $n_1,n_2$ instead of  $n_1(\true), n_2(\true)$. 
We will show that with high probability, there exist $k=\omega(1)$ pairs of vertices $\{(u_i,v_i):i\in [k]\}$ with opposite communities such that swapping their labels results in an equiprobable graph instance. By Proposition~\ref{proposition:equiprobable}, this would show that exact recovery fails with probability $1 - o(1)$ for any algorithm.

For $j=1,2$, let $S_j$ be sets of $\lfloor 2n_j/\ln^2(n)\rfloor \asymp \lfloor n/\ln^2(n)\rfloor $ randomly selected vertices from Community 1 and Community 2, respectively. 
Let $S = S_1 \cup S_2$. 
Next, let $S'$ be the set of all vertices in $S$ whose connections to all other vertices in $S$ are censored. We claim that $|S'|> 3n/2\ln^2(n)$ with probability $1-o(1)$. To see this, observe that the expected number of revealed connections between vertices in $S$ is at most $\alpha(2n/\ln^2(n))^2=4tn/\ln^3(n)$, so with high probability there are fewer than $n/4\ln^2(n)$ such connections. Therefore with high probability there are fewer than $\lfloor n/2\ln^2(n) \rfloor$ vertices with at least one neighbor in $S$, from which the claim follows.

Now, let $m=\lfloor \sqrt{pq} t\ln(n)/2\rfloor$, $m'=\lfloor\sqrt{(1-p)(1-q)}t\ln(n)/2\rfloor$, and $m''=m+m'$. Let $\overline{p}_1$ be the probability that a vertex $v$ in Community 1 has exactly $m$ present edges and $m'$ absent edges to vertices in each community, conditioned on $v\in S'$. 
By Lemma~\ref{fact:stirling}, we have 
\begin{align*}
   \overline{p}_1&= M(n_1 - \lfloor 2n_1/\ln^2(n)\rfloor, m,m',p) \times M(n_2 - \lfloor 2n_2/\ln^2(n)\rfloor, m,m',q) \\
   & \asymp \e^{-t\log n}\frac{(\frac{t^2 pq \log^2 n}{4})^{m}(\frac{t^2 (1-p)(1-q) \log^2 n}{4})^{m'}}{(m!)^2(m'!)^2}  \\
   &\asymp n^{-t} \frac{(\frac{t^2 pq \log^2 n}{4})^{m}(\frac{t^2 (1-p)(1-q) \log^2 n}{4})^{m'}}{(2\pi)^2 \e^{-2m} m^{2m+1} \e^{-2m'} (m')^{2m'+1}} \\
   & = \frac{n^{-t}}{4\pi^2 m m'} \e^{2m+2m'} \bigg(\frac{t^2 pq \log^2 n}{4m^2}\bigg)^{m}\bigg(\frac{t^2 (1-p)(1-q) \log^2 n}{4m'^2}\bigg)^{m'} \\
   &\approx n^{-t }n^{\sqrt{pq}t+\sqrt{(1-p)(1-q)}t} \\ &=n^{-\left(\sqrt{p}-\sqrt{q}\right)^2t/2-\left(\sqrt{1-p}-\sqrt{1-q}\right)^2t/2}.
\end{align*}
This implies that $\overline{p}_1=\omega(\log^2(n)/n)$ because $[ (\sqrt{p}-\sqrt{q})^2+(\sqrt{1-p}-\sqrt{1-q})^2]t/2 <1$ since $t<t_c(p,q)$. Repeating the calculation for the case that $v$ is in Community 2, we conclude that the probability $\overline{p}_2$ that a given vertex $v \in S'$ has exactly $m$ present edges and $m'$ absent edges to vertices in each community is $\omega\left(\ln^2(n)/n\right)$. 

For $v \in S'$, let $Y(v)$ be the indicator that $v$ has exactly $m$ present edges and $m'$ absent edges to each community. Note that the random variables in the set $\{Y(v)\}_{v \in S'}$ are mutually independent conditionally on $S'$.
Finally, observe that if $u \in S' \cap S_1$ and $v \in S' \cap S_2$ satisfy $Y(u) = Y(v) = 1$, then switching the community labels of $u$ and $v$ results in an equiprobable outcome.

Let 
\[Y_1 = \sum_{v \in S' \cap S_1} Y(v) ~~~\text{and}~~~ Y_2 = \sum_{v \in S' \cap S_2} Y(v).\]
It suffices to show that there is a function $f(n) = \omega(1)$ such that $Y_1, Y_2 \geq f(n)$ with probability $1-o(1)$. 
We prove the claim for $Y_1$, and the proof for $Y_2$ follows similary. 
Observe that conditioning on $|S' \cap S_1|$,
\[\mathbb{E}[Y_1 ~\big|~|S' \cap S_1|] = |S' \cap S_1| \cdot \overline{p}_1.\] 
Fix $\ve > 0$. By Chebyshev's inequality, 
\begin{align*}
\mathbb{P}\left(Y_1 \leq (1- \ve) |S' \cap S_1| \cdot \overline{p}_1 ~\big|~ |S' \cap S_1| = s \right) &\leq \frac{\var\left(Y_1~\big|~ |S' \cap S_1| = s \right)}{\ve^2 s^2  \overline{p}_1^2}\leq \frac{s \overline{p}_1(1 - \overline{p}_1)}{\ve^2 s^2 \overline{p}_1^2} \leq \frac{1}{\ve^2 s \overline{p}_1}.
\end{align*}
Recall that $|S'| > \frac{3n}{2 \log^2(n)}$ with probability $1-o(1)$. Therefore, using $|S_2| = \lfloor n/\log^2n\rfloor$, we have $|S' \cap S_1| > \frac{n}{2 \log^2(n)}$ with probability $1-o(1)$. We conclude that
\begin{align*}
\mathbb{P}\left(Y_1 \leq (1-\ve) \frac{n}{2 \log^2(n)}\overline{p}_1 \right) &\leq \mathbb{P}\bigg(Y_1 \leq (1- \ve) |S' \cap S_1| \cdot \overline{p}_1 ~\Big|~ |S' \cap S_1| > \frac{n}{2 \log^2(n)} \bigg) + o(1)\\
&\leq \frac{2\log^2(n)}{\ve^2 \overline{p}_1 n} + o(1).
\end{align*}
Recalling that $\overline{p}_1= \omega(\log^2(n)/n)$, we have shown that there is a function $f(n) = \omega(1)$ such that $\mathbb{P}\left(Y_1 \leq f(n)\right) = o(1)$. Similarly, using $\overline{p}_2= \omega(\log^2(n)/n)$, it holds that $\mathbb{P}\left(Y_2 \leq f(n)\right) = o(1)$. Applying Proposition \ref{proposition:equiprobable} with $\delta = 1 - o(1)$ and $k = \left(f(n)\right)^2$ completes the proof. 
\end{proof}

\begin{remark} \normalfont
We could generalize this to an argument that recovery is impossible whenever $t<t_c(p_1,p_2,q)$ by arguing that there will be vertices with degree profiles of \[\left(\lfloor p_1^xq^{1-x}t \log(n)/2\rfloor ,\lfloor (1-p_1)^x(1-q)^{1-x}t\log(n)/2\rfloor,\lfloor p_2^{1-x}q^xt\log(n)/2\rfloor,\lfloor(1-p_2)^{1-x}(1-q)^x t\log(n)/2\rfloor \right)\]
in both communities, where $x$ takes on the value used in the computation of $t_c(p_1,p_2,q)$. The argument that this holds is largely analogous to that in the proof above, although one needs to use the fact that the criterion used to choose $x$ implies that vertices in each community are approximately equally likely to have this community profile and then bound both probabilities by bounding the weighted geometric means of their obvious formulations with weights $x$ and $1-x$.
\end{remark}

\section{Analysis of the spectral algorithm}\label{sec:entrywise}
Recall the signed adjacency matrix from Section~\ref{sec:spec-result}. 
The key to establishing Theorem \ref{theorem:exact-recovery-spectral} is the method of entrywise eigenvector analysis of \cite{Abbe2020}. Let us denote  $A^{\star} = \mathbb{E}[A \mid \true]$, and let $(u_k^{\star}, \lambda_k^{\star})$ denote the $k$-th eigenvector-eigenvalue pair of $A^{\star}$, where $(\lambda_k^\star)_k$ are arranged in non-decreasing order. Abbe et.~al.~\cite{Abbe2020} show that, under a set of general conditions, $u_k \approx Au_k^{\star}/\lambda_k^{\star}$ in the $\ell_{\infty}$-norm. Results of this kind were also derived recently by Lei \cite{Lei2019}.
Thus, if we can show that the signs of $Au_1^{\star}/\lambda_1^{\star}$ recover the communities with high probability (up to a global flip), and the magnitude of its entries are bounded away from zero, then the signs of $u_1$ also recover the communities with high probability. More precisely, using the methods of  \cite[Theorem 2.1]{Abbe2020}, we will establish the following result: 
\begin{proposition}\label{lemma:corollary-3.1-abbe}
With probability $1 - O\left(n^{-3}\right)$ we have
\[\min_{s \in \{\pm 1\}} \left \Vert u_k - s\frac{A u_k^{\star}}{\lambda_k^{\star}} \right\Vert_{\infty} \leq \frac{C}{\sqrt{n} \log \log n} \]
for $k \in \{1,2\}$, where $C = C(p,q,t)$ is a constant depending only on $p$, $q$, and $t$.
\end{proposition}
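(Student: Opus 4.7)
The plan is to apply Theorem~2.1 of \cite{Abbe2020} directly. That theorem produces an $\ell_\infty$ eigenvector perturbation bound of the form $\|u_k - s \cdot Au_k^\star/\lambda_k^\star\|_\infty \leq \mathrm{err}$ once one verifies four standard ingredients: (i) a spectral-norm bound on $A - A^\star$; (ii) a spectral gap between $|\lambda_k^\star|$ and the rest of the spectrum of $A^\star$; (iii) incoherence of the population eigenvectors, i.e.\ $\|u_k^\star\|_\infty = O(1/\sqrt{n})$; and (iv) a row-wise concentration estimate controlling $|(A - A^\star)_{m\cdot}\, w|$ for eigenvectors $w$ of leave-one-out versions of $A$. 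I will verify each of these in turn for the $A$ and $A^\star$ at hand.

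The spectral structure of $A^\star$ is straightforward because $A^\star$ is constant on the four blocks determined by the communities, vanishes on the diagonal, and has rank two. Its nonzero eigenvalues can therefore be obtained by diagonalizing a $2\times 2$ matrix whose entries involve $n_1, n_2$, $a := \alpha(p - y(1-p))$, and $b := \alpha(q - y(1-q))$. Using $n_1, n_2 = (1 + O(n^{-1/3}))n/2$ from \eqref{eq:count-concentration}, both nonzero eigenvalues are of order $n\alpha = \Theta(\log n)$, are separated by a constant multiplicative factor whenever $p \neq q$, and the corresponding unit eigenvectors lie within $O(n^{-1/3})$ (in $\ell_2$) of $\mathbf{1}/\sqrt{n}$ and $\sigma_0/\sqrt{n}$. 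In particular $\|u_k^\star\|_\infty = O(1/\sqrt{n})$, and the gap $\min(|\lambda_k^\star|,|\lambda_k^\star-\lambda_{k+1}^\star|)$ is $\Theta(\log n)$ for $k\in\{1,2\}$.

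For the spectral-norm bound, the entries $\{A_{ij} - A^\star_{ij}\}_{i<j}$ are independent, uniformly bounded by $\max(1,y) + O(\alpha) = O(1)$, and each has variance $O(\alpha)$. Matrix Bernstein then yields $\|A - A^\star\|_{op} = O(\sqrt{\log n})$ with probability $1-O(n^{-3})$. The row-wise condition is handled by a leave-one-out decoupling: for each $m \in [n]$, define $A^{(m)}$ by replacing the $m$-th row and column of $A$ with those of $A^\star$, and let $u_k^{(m)}$ be its $k$-th eigenvector. Then $u_k^{(m)}$ is independent of the randomness in the $m$-th row/column, so $(A-A^\star)_{m\cdot} u_k^{(m)}$ is a sum of at most $N(m) = O(\log^{5/4} n)$ bounded independent terms (by \eqref{eq:neighbor-concentration}) with variance $O(1/n)$, and Bernstein applied at deviation level of order $1/\sqrt{n}$ gives a bound with probability $1-O(n^{-4})$. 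A union bound over $m$ and $k$ closes the estimate. The distance from $u_k^{(m)}$ to $u_k^\star$ is then controlled by a standard Davis--Kahan argument using (i)--(iii), and finally dividing by $|\lambda_k^\star|=\Theta(\log n)$ converts a per-row bound of order $\sqrt{\log n}/\sqrt{n}$ into one of order $1/(\sqrt{n}\,\sqrt{\log n}\,)$.

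The main obstacle is a careful accounting of the quantitative dependencies: one must track the tail probabilities so that all failure events absorb into $O(n^{-3})$, and simultaneously calibrate the Bernstein deviation in the leave-one-out step to produce the precise denominator $\sqrt{n}\log\log n$ rather than the cruder $\sqrt{n}$ one would get by reading the bounds off at face value. Concretely, the $\log\log n$ factor appears when combining the spectral-gap factor $1/\lambda_k^\star = \Theta(1/\log n)$ with the row-wise estimate and the $O(\sqrt{\log n})$ operator-norm bound, exactly as in the computation underlying \cite[Theorem~2.1]{Abbe2020}. Once the four ingredients are verified at this quantitative level, the conclusion of the proposition is read off from that theorem with $k\in\{1,2\}$.
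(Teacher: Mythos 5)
Your high-level plan matches the paper's: apply \cite[Theorem 2.1]{Abbe2020} by computing the eigenstructure of $A^\star$ and verifying its four conditions. However, there is a concrete gap in the spectral-norm step. You claim that matrix Bernstein yields $\|A-A^\star\|_2 = O(\sqrt{\log n})$. It does not. With entries bounded by $O(1)$ and per-row variance proxy $\sigma^2 = n\cdot O(\alpha) = O(\log n)$, matrix Bernstein at failure probability $n^{-3}$ gives only $\|A-A^\star\|_2 = O(\log n)$ — the $Rt$ term dominates in the sparse regime $\alpha = \Theta(\log n / n)$. This is not a cosmetic loss: the whole point of Proposition~\ref{lemma:corollary-3.1-abbe} is the $\log\log n$ in the denominator, and that comes precisely from taking $\gamma = \Theta(1/\sqrt{\log n})$ in condition (iv) of Theorem 2.1, which forces $\varphi(\gamma) = \Theta(1/\log(1/\gamma)) = \Theta(1/\log\log n)$. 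If the best you can assert is $\|A-A^\star\|_2 = O(\log n)$, you are stuck with $\gamma = \Theta(1)$ and the bound degrades to $O(1/\sqrt{n})$, which is not good enough. The paper obtains the sharper $O(\sqrt{\log n})$ via symmetrization and a Seginer/Feige--Ofek-type moment argument (Lemma~\ref{lemma:supporting-1}, following \cite[Theorem 9]{Hajek2016}); that argument exploits the sparsity in a way matrix Bernstein does not.

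A smaller issue: your description of the row-wise condition conflates the hypothesis one must check with the leave-one-out machinery internal to Abbe et al.'s proof. Condition (iii) of their Theorem 2.1 is a bound on $|(A-A^\star)_{m\cdot}w|$ for an arbitrary deterministic $w$, controlled via a Chernoff/Bernstein estimate in terms of $\|w\|_\infty$ and $\|w\|_2$ (this is Lemma~\ref{lemma:supporting-2} in the paper); you do not need to construct $A^{(m)}$, its eigenvectors, or invoke Davis--Kahan yourself — that is all inside the black box. Also, for $p>q$ with the encoding $y$ of \eqref{eq:y-choice}, the off-diagonal blocks of $A^\star$ are negative, so the top eigenvector $u_1^\star$ is the community-aligned $\pm 1/\sqrt{n}$ vector and $u_2^\star$ is the all-ones direction, the reverse of what you wrote; this does not affect the argument but is worth getting right.
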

In Section \ref{sec:prior-work-entrywise}, we will provide a main result of \cite{Abbe2020}, specialized to our setting. In Section \ref{sec:eigenvector-approximation}, we prove Proposition \ref{lemma:corollary-3.1-abbe}. With this result in hand, we provide the proof of Theorem \ref{theorem:exact-recovery-spectral} in Section \ref{sec:success-spectral-theorem}.

\subsection{Prior work on entrywise eigenvector analysis.}\label{sec:prior-work-entrywise}
We start by reproducing \cite[Theorem 2.1]{Abbe2020}, specialized to the case where $A^{\star}$ is a rank-2 matrix and we wish to approximate a single eigenvector.  
\begin{theorem}\emph{(\cite[Theorem 2.1]{Abbe2020}).}
\label{theorem:theorem-2.1-abbe}
Let $A$ be a symmetric random matrix and  $A^\star = \E[A]$. Suppose that the following conditions hold with some $\gamma \in \mathbb{R}$ and $\varphi: \mathbb{R} \to \mathbb{R}$:
\begin{enumerate}[(i)]
    \item (Incoherence)\label{assumption:assumption-1} \ $\Vert A^{\star} \Vert_{2 \to \infty} \leq \gamma \Delta^{\star}$, where $\Delta^{\star} = (\lambda_1^{\star} - \lambda_2^{\star}) \wedge |\lambda_1^{\star}|$ and $\gamma>0$.
    \item \label{assumption:assumption-2} (Row- and column-wise independence) \ 
    For any $m \in [n]$, the entries in the $m$-th row and column of $A$ are independent with others. 
    \item \label{assumption:assumption-4} (Row concentration) \ 
    Suppose $\varphi(x)$ is continuous and non-decreasing in $\mathbb{R}_+$ with $\varphi(0) = 0$, $\varphi(x)/x$ is non-increasing in $\mathbb{R}_+$, and $\delta_1 \in (0,1)$. For any $m \in [n]$ and $w \in \mathbb{R}^n$,
    \[\mathbb{P}\left(\left|(A - A^{\star})_{m  \cdot} w\right| \leq \Delta^{\star} \Vert w \Vert_{\infty} \varphi\left(\frac{\Vert w \Vert_2}{\sqrt{n} \Vert w \Vert_{\infty}} \right)\right) \geq 1 - \frac{\delta_1}{n}. \] 
    \item \label{assumption:assumption-3} (Spectral norm concentration) \ Let $\kappa = |\lambda_1^{\star}|/\Delta^{\star}$. Suppose  $32 \kappa \max\{\gamma, \varphi(\gamma)\} \leq 1$ and for some $\delta_0 \in (0,1)$, 
    \[\mathbb{P}\left(\Vert A - A^{\star}\Vert_2 \leq \gamma \Delta^{\star}  \right) \geq 1 - \delta_0. \]
    \end{enumerate}
    Then with probability at least $1 - \delta_0 - 2 \delta_1$,
    \[\min_{s \in \{\pm1\}} \left \Vert u_k - s\frac{Au_k^{\star}}{\lambda_k^{\star}}\right \Vert_{\infty} \lesssim \kappa\left(\kappa + \varphi(1)\right)\left(\gamma + \varphi(\gamma)\right) \Vert u_1^{\star}\Vert_{\infty} + \gamma \frac{\Vert A^{\star}\Vert_{2 \to \infty}}{\Delta^{\star}}\]
    for $k \in \{1,2\}$, where $\lesssim$ hides an absolute constant.
\end{theorem}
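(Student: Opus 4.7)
The plan is to follow the leave-one-out strategy for entrywise eigenvector perturbation developed in \cite{Abbe2020}. I would begin with the eigenvector identity $\lambda_k u_k = A u_k$, which coordinatewise gives $u_k(m) = \lambda_k^{-1} A_{m\cdot} u_k$, and write the target quantity (for a sign choice $s$ that aligns $u_k$ with $u_k^\star$) as
\[
u_k(m) - \frac{(A u_k^\star)(m)}{\lambda_k^\star} = \frac{A_{m\cdot}(u_k - u_k^\star)}{\lambda_k} + \frac{\lambda_k^\star - \lambda_k}{\lambda_k \lambda_k^\star}\, A_{m\cdot} u_k^\star.
\]
The second term is handled by standard tools: Weyl's inequality together with (\ref{assumption:assumption-3}) gives $|\lambda_k - \lambda_k^\star| \leq \|A - A^\star\|_2 \leq \gamma \Delta^\star$, and the incoherence hypothesis (\ref{assumption:assumption-1}) gives $\|u_k^\star\|_\infty \leq \|A^\star\|_{2\to\infty}/|\lambda_k^\star|$, producing the additive $\gamma \|A^\star\|_{2\to\infty}/\Delta^\star$ piece of the final bound.

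The first term is where the real work lies, because $u_k$ depends on the $m$-th row of $A$, preventing a direct application of (\ref{assumption:assumption-4}). To decouple this dependence, I would introduce, for each $m$, the leave-one-out matrix $A^{(m)}$ obtained by replacing the $m$-th row and column of $A$ by their expectations, and let $u_k^{(m)}$ be its $k$-th eigenvector. By (\ref{assumption:assumption-2}), $u_k^{(m)}$ is independent of the $m$-th row of $A$, so (\ref{assumption:assumption-4}) applies cleanly to $w = u_k^{(m)}$ and yields a sharp bound on $(A - A^\star)_{m\cdot} u_k^{(m)}$. The residual $(A-A^\star)_{m\cdot}(u_k - u_k^{(m)})$ is controlled more crudely by $\|A - A^\star\|_2 \cdot \|u_k - u_k^{(m)}\|_2$, and the $\ell_2$ gaps $\|u_k - u_k^\star\|_2$ and $\|u_k^{(m)} - u_k^\star\|_2$ are obtained from the Davis-Kahan $\sin\Theta$ theorem applied to the perturbations $A - A^\star$ and $A - A^{(m)}$, using the spectral gap $\Delta^\star$. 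The smallness condition $32\kappa\max\{\gamma,\varphi(\gamma)\} \leq 1$ is what ensures these gaps are small enough for the sign choice $s$ to be made consistently across rows.

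The main obstacle will be that the $\ell_\infty$ error $\|u_k - u_k^{(m)}\|_\infty$ re-enters on the right-hand side through terms like $A^\star_{m\cdot}(u_k - u_k^{(m)})$, which must be controlled using $\|A^\star\|_{2\to\infty} \leq \gamma \Delta^\star$. My plan to handle this is to set up a self-bounding (fixed-point) inequality of the schematic form
\[
\|u_k - u_k^\star\|_\infty \leq C_1 \kappa(\kappa + \varphi(1))(\gamma + \varphi(\gamma)) \|u_k^\star\|_\infty + C_2 \kappa\max\{\gamma, \varphi(\gamma)\} \|u_k - u_k^\star\|_\infty,
\]
and then absorb the second term into the left-hand side using $32\kappa\max\{\gamma,\varphi(\gamma)\} \leq 1$. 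The monotonicity of $\varphi$ and of $\varphi(x)/x$ in (\ref{assumption:assumption-4}) is essential here in order to push $\varphi$ through the various norms at different scales (coordinates vs.\ full vectors) without losing the prefactor. Finally, a union bound over $m \in [n]$ with per-row failure probability $\delta_1/n$, combined with the spectral event of probability $1-\delta_0$ and the two indices $k \in \{1,2\}$, would give the stated overall probability $1 - \delta_0 - 2\delta_1$.
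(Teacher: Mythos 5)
The paper does not prove this result; it reproduces Theorem~2.1 of Abbe, Fan, Wang, and Zhong~\cite{Abbe2020} verbatim (specialized to rank 2) and invokes it as a black box. So there is no in-paper proof to compare against, only the original argument in~\cite{Abbe2020}. Your sketch is a faithful high-level reconstruction of that argument: the starting decomposition via the eigenvector identity, the leave-one-out matrices $A^{(m)}$ to decouple row $m$ from $u_k$, row concentration applied to the independent vector $u_k^{(m)}$, Davis--Kahan to control the $\ell_2$ discrepancies, and the self-bounding inequality absorbed via $32\kappa\max\{\gamma,\varphi(\gamma)\}\le 1$ are all the right ingredients in the right roles. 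Two details worth flagging for a full write-up: (a) the term $A_{m\cdot}u_k^\star$ must itself be split as $A^\star_{m\cdot}u_k^\star + (A-A^\star)_{m\cdot}u_k^\star$, with the first piece controlled by incoherence and the second by row concentration, rather than bounded directly; and (b) the $2\delta_1$ in the failure probability in~\cite{Abbe2020} arises from two applications of the row-concentration hypothesis per row (to $u_k^{(m)}$ and to $u_k^\star$) rather than from a union over $k\in\{1,2\}$, though the numerical budget works out the same way. Neither point changes the structure of the argument.
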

Next, we state the following two lemmas to verify the final two conditions in Theorem~\ref{theorem:theorem-2.1-abbe}. 
The first is similar to \cite[Theorem 9]{Hajek2016}.
\begin{lemma}\label{lemma:supporting-1}
There exists $c_1 = c_1(p,q,t)>0$ such that
\[\mathbb{P}\left(\Vert A - A^{\star} \Vert_2 \geq c_1 \sqrt{\log(n)}\right) \leq n^{-3}.\]
\end{lemma}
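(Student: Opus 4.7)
My plan is to realize $A - A^\star$ as a sum of independent, bounded, mean-zero symmetric matrices and invoke the matrix Bernstein inequality. Conditionally on $\sigma_0$, the entries $\{A_{ij}\}_{1 \leq i < j \leq n}$ are independent and each takes values in $\{-y, 0, 1\}$, so writing
\[ A - A^\star = \sum_{1 \leq i < j \leq n} X_{ij}, \qquad X_{ij} := (A_{ij} - A^\star_{ij})\bigl(e_i e_j^\top + e_j e_i^\top\bigr), \]
each summand is symmetric, mean zero, and satisfies $\|X_{ij}\|_2 \leq 2 C_0$ for $C_0 := \max(1, y)$, a constant depending only on $p$ and $q$.

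Next, I would compute the matrix variance proxy. A direct calculation gives
\[ V := \bigg\| \sum_{i<j} \mathbb{E}\bigl[X_{ij}^2\bigr] \bigg\|_2 = \bigg\| \sum_{i<j} \mathrm{Var}(A_{ij})\,(e_i e_i^\top + e_j e_j^\top) \bigg\|_2 = \max_{i}\sum_{j \neq i}\mathrm{Var}(A_{ij}), \]
and since $\mathrm{Var}(A_{ij}) \leq C_0^2\,\alpha = O(\log n / n)$ uniformly in $i,j$, we obtain $V = O(\log n)$. The matrix Bernstein inequality (see e.g.\ Tropp's user-friendly version) then gives, for any $\tau > 0$,
\[ \mathbb{P}\bigl(\|A - A^\star\|_2 \geq \tau \bigr) \;\leq\; 2n \exp\!\left(-\frac{\tau^2/2}{V + 2 C_0 \tau / 3}\right). \]
Setting $\tau = c_1 \sqrt{\log n}$ makes the denominator $O(\log n)$, so the exponent is $-\Omega(c_1^2) \log n$; choosing $c_1 = c_1(p,q,t)$ sufficiently large makes the right-hand side at most $n^{-3}$. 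The bound is stated conditionally on $\sigma_0$, but since $C_0$ and the edge variances do not depend on $\sigma_0$, the same bound holds unconditionally (or one can absorb the negligible $2\exp(-n^{1/3}/6)$ from \eqref{eq:count-concentration} into the constant).

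There is essentially no obstacle here beyond bookkeeping the constants; alternative routes such as the Bandeira--van Handel inhomogeneous Wigner bound or the Feige--Ofek argument used in \cite{Hajek2016} would also yield the same $\sqrt{\log n}$ rate, since the maximum row variance and the entry magnitudes are both $O(\log n /n) \cdot n = O(\log n)$ and $O(1)$ respectively, saturating the typical Bernstein regime rather than the sub-Gaussian one.
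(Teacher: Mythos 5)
There is a genuine gap: the matrix Bernstein inequality does \emph{not} deliver a $\sqrt{\log n}$ rate in this regime, and your numerics do not actually close. With $V = C_v \log n$, $R := 2C_0 = O(1)$, and $\tau = c_1\sqrt{\log n}$, the Bernstein tail becomes
\[
2n\exp\!\left(-\frac{c_1^2\log n/2}{C_v\log n + (2C_0 c_1/3)\sqrt{\log n}}\right) \;=\; 2n\exp\!\left(-\frac{c_1^2}{2C_v} + o(1)\right),
\]
which is $\Theta(n)$ as $n\to\infty$ for \emph{any} fixed $c_1$ — the $\log n$ in the variance proxy exactly cancels the $\log n$ in $\tau^2$, leaving a constant in the exponent that cannot beat the dimensional prefactor $2n$. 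To drive this below $n^{-3}$ you would need $\tau = \Omega(\log n)$, so matrix Bernstein proves only $\|A - A^\star\|_2 = O(\log n)$ with the stated failure probability. This is a well-known factor-$\sqrt{\log n}$ loss for sparse ($\alpha \asymp \log n / n$) random matrices, and the whole point of Lemma \ref{lemma:supporting-1} is to remove it; otherwise the incoherence and spectral-norm conditions of Theorem \ref{theorem:theorem-2.1-abbe} would not be verifiable.

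Your final sentence hedges toward the correct tools, but wrongly asserts that matrix Bernstein and Bandeira--van Handel give the same rate: they do not. Bandeira--van Handel gives $\mathbb{E}\|A-A^\star\|_2 \lesssim \sigma + \sigma_\ast\sqrt{\log n}$ with $\sigma = \max_i\sqrt{\sum_j\mathrm{Var}(A_{ij})} = O(\sqrt{\log n})$ and $\sigma_\ast = O(1)$, so it does achieve $O(\sqrt{\log n})$, and it is a strict improvement over Bernstein here. The paper instead symmetrizes (replacing $A-A^\star$ by a Rademacher-masked version), decomposes the resulting matrix into pieces that are distributed as centered $\{-1,0,1\}$-Bernoulli random matrices with i.i.d.\ entries, invokes the Feige--Ofek-type bound $\mathbb{E}\|X(r)\|_2 \lesssim \sqrt{nr}$ from \cite[Theorem 9]{Hajek2016} on each piece, and finishes with Talagrand concentration to convert the expectation bound into a $n^{-3}$ tail. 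Either that route or Bandeira--van Handel (plus a concentration step) would work; a direct appeal to matrix Bernstein would not.
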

The following lemma is similar to \cite[Lemma 7]{Abbe2020}.
\begin{lemma}\label{lemma:supporting-2}
Let $w \in \mathbb{R}^n$ be a fixed vector, $\{X_i\}_{i=1}^n$ be independent random variables where $\mathbb{P}(X_i = 1) = p_i$, $\mathbb{P}(X_i = -y) = q_i$, and $\mathbb{P}(X_i = 0) = 1-p_i -q_i$. Let $\beta \geq 0$. Then
\[\mathbb{P}\bigg( \bigg|\sum_{i=1}^n w_i\left(X_i - \mathbb{E}[X_i] \right) \bigg| \geq \frac{\max\{1,y\}(2+ \beta)n}{1 \vee \log\left(\frac{\sqrt{n} \Vert w \Vert_{\infty}}{\Vert w \Vert_2} \right)} \Vert w \Vert_{\infty} \max_i \{p_i + q_i\} \bigg) \leq 2 \exp\left(-\beta n \max_i\{p_i + q_i\} \right). \]
\end{lemma}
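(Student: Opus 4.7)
The plan is to treat this as a Bennett-type concentration inequality for the independent mean-zero random variables $Y_i := w_i(X_i - \mathbb{E}[X_i])$, essentially mirroring the proof of Lemma 7 in \cite{Abbe2020} with small adjustments to accommodate the ternary support $\{-y,0,1\}$ rather than $\{0,1\}$.

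First I will collect basic moment information. Since $X_i \in \{-y,0,1\}$ and $|\mathbb{E}[X_i]| \leq \max\{1,y\}$, we have the almost-sure bound $|Y_i| \leq M := 2\max\{1,y\}\,\|w\|_{\infty}$. Moreover $\mathrm{Var}(X_i) \leq \mathbb{E}[X_i^2] = p_i + y^2 q_i \leq \max\{1,y^2\}(p_i+q_i)$, so summing yields $\sum_i \mathrm{Var}(Y_i) \leq V := \max\{1,y^2\}\,\|w\|_2^2\,\rho$ where $\rho := \max_i(p_i+q_i)$. The classical Bennett inequality then gives, for every $t>0$,
\[
\mathbb{P}\Big(\Big|\sum_i Y_i\Big| \geq t\Big) \;\leq\; 2 \exp\!\Big(-\frac{V}{M^2}\,h\!\big(\tfrac{tM}{V}\big)\Big), \qquad h(x) := (1+x)\log(1+x)-x.
\]

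Next I plug in the target threshold $t = \max\{1,y\}(2+\beta)n\,\|w\|_{\infty}\,\rho / L$, where $L := 1 \vee \log(\sqrt{n}\,\|w\|_{\infty}/\|w\|_2)$, together with the auxiliary ratio $r := n\|w\|_{\infty}^2/\|w\|_2^2 \in [1,n]$ (so that $L = 1 \vee \tfrac12\log r$). A direct computation gives $V/M^2 = n\rho/(4r)$ and $tM/V = 2(2+\beta)r/L$, so the claim reduces to the purely deterministic inequality
\[
h\!\big(2(2+\beta)r/L\big) \;\geq\; 4 r \beta, \qquad r \geq 1,\ \beta \geq 0,\ L = 1 \vee \tfrac12 \log r.
\]

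Finally, I verify this deterministic inequality by splitting into two regimes. When $L = 1$ (equivalently $r \leq e^2$), the argument of $h$ is $\Theta(2+\beta)$ and bounded away from zero, so a direct check using $h(x) \geq x^2/(2+2x/3)$ in the bounded-$\beta$ range and $h(x) \asymp x \log x$ in the large-$\beta$ range handles all $\beta \geq 0$. When $L = \tfrac12 \log r$ with $r$ large, I use $h(x) \geq \tfrac{x}{2}\log(1+x)$ for $x \geq 0$; the factor $\log(1 + 2(2+\beta)r/L) \geq \log r - \log L + O(1) \geq 2L(1-o(1))$ combined with $\tfrac{V}{M^2}\cdot 2(2+\beta)r/L = (2+\beta)n\rho/(2L)$ produces the required $\beta n \rho$, the extra ``$2$'' in the numerator constant ``$2+\beta$'' (rather than the cheaper ``$1+\beta$'') being exactly what absorbs the sub-leading $\log\log r$ corrections and makes the estimate uniform in $\beta \geq 0$ and $w$. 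This last bookkeeping step is the only technical nuisance; everything else is a standard Bennett computation.
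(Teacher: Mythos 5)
Your Bennett-plus-reduction strategy is a genuinely different route from the paper's. The paper proves the lemma by a hands-on Chernoff argument: it bounds the log-MGF of each $w_i(X_i-\mathbb{E}X_i)$ using $e^x\leq 1+x+\tfrac{e^z}{2}x^2$, and then makes the single clever choice $\lambda = \tfrac{1}{\max\{1,y\}}\big(1\vee\log(\sqrt{n}/\|w\|_2)\big)$ so that $\lambda^2\|w\|_2^2 e^{\lambda\max\{1,y\}}\leq en/\max\{1,y^2\}$; the conclusion then drops out in two lines. You instead invoke the classical two-sided Bennett inequality and reduce to a purely deterministic inequality in $r$ and $\beta$. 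Your reduction is correct: the algebra giving $V/M^2 = n\rho/(4r)$ and $tM/V = 2(2+\beta)r/L$ checks out, and the deterministic inequality $h(2(2+\beta)r/L)\geq 4r\beta$ is in fact true for all $r\geq 1$, $\beta\geq 0$.

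However, the specific lower bounds on $h$ you propose do not close the deterministic verification, so the ``bookkeeping'' you wave off is a real gap. In the regime $L=1$, the Bernstein bound $h(x)\geq x^2/(2+2x/3)$ with $x=2(2+\beta)r$ reduces, after simplification, to $r(2+\beta)(2-\beta/3)\geq 2\beta$, which is \emph{false} already at $r=1,\beta=3$ (it gives $5\geq 6$), well below the range where the asymptotic $h(x)\asymp x\log x$ kicks in with room to spare. Similarly, in the regime $L>1$, the bound $h(x)\geq \tfrac{x}{2}\log(1+x)$ reduces to $\log(1+a)\geq 4L\beta/(2+\beta)$; taking $\beta=L$ and $r$ large, the right side approaches $4L$ while the left side is only $\approx 2L+\log(2+\beta)-\log L = 2L+O(1)$, so the bound fails. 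The fix is simple: use the uniform bound $h(x)\geq x(\log x - 1)$, valid for all $x>0$ (since $(1+x)\log(1+x)\geq x\log x+\log(1+x)$). With this, the case $L>1$ reduces to $4L\geq (2+\beta)\big[\log L + (1-\log 2) - \log(2+\beta)\big]$, whose right side is maximized at $2+\beta=L/2$ with value $L/2<4L$; and the case $L=1$, $1\leq r\leq e^2$ reduces (at the worst case $r=1$) to $v[\log v + \log 2 - 3] + 4\geq 0$ for $v=2+\beta\geq 2$, whose minimum is $4-e^2/2\approx 0.31>0$. With that substitution your argument is complete and constitutes a clean alternative proof.
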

The proofs of the above lemmas will be provided in Appendix~\ref{sec:appendix-supporting}. 

\subsection{Proof of eigenvector approximation result.}\label{sec:eigenvector-approximation}
We start by determining the eigenvalues and eigenvectors of $A^\star =\mathbb{E}[A \mid \true]$.
\begin{lemma}\label{lem:evals-Astar}
If $n_1(\sigma_0), n_2(\sigma_0) \geq 1$, then $A^\star$ has rank 2. If $p>q$, then with probability at least $1 - 2\exp\left(-n^{\nicefrac{1}{3}}/6\right)$, the eigenvalues of $A^\star$ are given by
\begin{align*}
\lambda_1^{\star} = \frac{(1+o(1))t \log n}{2 \log \big(\frac{p}{q} \big)} \left(\DKL\left(p\Vert q \right) + \DKL\left(q \Vert p \right) \right), \quad 
\lambda_2^{\star} = \frac{(1+o(1))t \log n}{2 \log \big(\frac{p}{q} \big)} \left(\DKL\left(p\Vert q \right) - \DKL\left(q \Vert p \right) \right)
\end{align*}
and the corresponding eigenvectors $u_1^\star$ and $u_2^\star$ are respectively given by  $(u_1^{\star})_i = \frac{1+o(1)}{\sqrt{n}}$ if $\sigma_0(i) = +1$,  $(u_1^{\star})_i = -\frac{1+o(1)}{\sqrt{n}}$ if $\sigma_0(i) = -1$, and the other eigenvector has $(u_2^{\star})_i =\frac{1}{\sqrt{n}}$ for all $i$. 
\end{lemma}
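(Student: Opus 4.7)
The strategy is to express $A^\star$ as a rank-two block matrix (plus a negligible diagonal correction), diagonalize the associated $2 \times 2$ reduced matrix explicitly, and translate the eigenvectors back using the community-size concentration bound~\eqref{eq:count-concentration}. Concretely, for $i \ne j$ a direct computation of the expectation gives $A^\star_{ij} = \alpha a$ when $\sigma_0(i)=\sigma_0(j)$ and $A^\star_{ij} = \alpha b$ otherwise, where $a := p - y(1-p)$ and $b := q - y(1-q)$. Letting $e_1, e_2 \in \{0,1\}^n$ denote the indicator vectors of the two communities, I can write
\[
A^\star = \alpha a\,(e_1 e_1^\top + e_2 e_2^\top) + \alpha b\,(e_1 e_2^\top + e_2 e_1^\top) - \alpha a\,I,
\]
where the $-\alpha a I$ term restores the diagonal to zero. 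The first three terms have range contained in $\mathrm{span}(e_1,e_2)$ and hence form a rank-two matrix, while the correction has operator norm $\alpha|a|=O(\log n/n)$, which is negligible against the $\Theta(\log n)$ scale of $\lambda^\star_{1,2}$ that emerges below. (Equivalently, one may adopt the convention $A^\star_{ii}=\alpha a$, in which case the rank is literally two.)

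Next, restricting the rank-two part to the orthonormal basis $v_j = e_j/\sqrt{n_j}$ of its range yields the reduced matrix
\[
D = \alpha \begin{pmatrix} a\,n_1 & b\sqrt{n_1 n_2} \\ b\sqrt{n_1 n_2} & a\,n_2 \end{pmatrix},
\]
whose eigenvalues are $\lambda_\pm = \tfrac{\alpha}{2}\bigl( a(n_1+n_2) \pm \sqrt{a^2(n_1-n_2)^2 + 4 b^2 n_1 n_2}\,\bigr)$. Applying \eqref{eq:count-concentration} with $\varepsilon = n^{-1/3}$, I condition on the event, of probability at least $1-2\exp(-n^{1/3}/6)$, on which $|n_j - n/2| \le n^{2/3}$. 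There $4n_1 n_2 = n^2(1+O(n^{-2/3}))$ and $(n_1-n_2)^2 = O(n^{4/3})$, so (since $b \ne 0$) the discriminant equals $b^2 n^2(1+o(1))$, giving $\lambda_\pm = \tfrac{\alpha n}{2}(a \pm |b|)(1+o(1))$.

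To recognize the KL-divergence form appearing in the statement, I invoke the defining identity $y \log(p/q) = \log((1-q)/(1-p))$ from~\eqref{eq:y-choice}, which after a short algebraic manipulation yields $a\log(p/q) = D_{\mathrm{KL}}(p \Vert q)$ and $-b\log(p/q) = D_{\mathrm{KL}}(q \Vert p)$. The hypothesis $p>q$ together with $p \ne q$ ensures $\log(p/q)>0$ and both KL divergences strictly positive, so $a > 0$ and $b < 0$, giving $|b|=-b$. Substituting $\alpha = t\log n/n$ then reproduces exactly the claimed expressions for $\lambda^\star_1$ and $\lambda^\star_2$. For the eigenvectors, when $n_1 = n_2$ the matrix $D$ takes the symmetric form $\bigl(\begin{smallmatrix} a' & b' \\ b' & a' \end{smallmatrix}\bigr)$ with $b' < 0$, whose unit eigenvectors are $(1,-1)/\sqrt 2$ for the larger eigenvalue $a'-b'$ and $(1,1)/\sqrt 2$ for $a'+b'$. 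The imbalance $|n_1-n_2| = O(n^{2/3})$ perturbs these coordinates by $O(n^{-1/3})$; unwinding $v_j = e_j/\sqrt{n_j}$ then yields $(u_1^\star)_i = \sigma_0(i)(1+o(1))/\sqrt{n}$ and $(u_2^\star)_i = (1+o(1))/\sqrt{n}$, as stated. The whole argument is routine $2\times 2$ linear algebra; the only real bookkeeping point is tracking the $o(1)$ error coming from community-size fluctuations, which \eqref{eq:count-concentration} handles cleanly.
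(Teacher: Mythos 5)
Your proof is correct and follows the same high-level plan as the paper's---reduce the two-block structure of $A^\star$ to a $2\times 2$ eigenproblem, and convert the resulting coefficients $a=p-y(1-p)$, $b=q-y(1-q)$ to KL-divergences using the identity defining $y$---but it handles the unbalanced-community case differently. The paper first computes eigenvalues exactly when $n_1=n_2=n/2$ and then appeals to Weyl's inequality applied to the difference of two block matrices to argue that perturbing $n_1,n_2$ by $o(n)$ changes the eigenvalues by a $(1+o(1))$ factor; you instead diagonalize the exact $2\times 2$ reduced matrix $D$ for arbitrary $n_1,n_2$ in the orthonormal basis $e_j/\sqrt{n_j}$ and then substitute the concentration bound $|n_j - n/2| \le n^{2/3}$ directly into the closed-form discriminant. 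Your route sidesteps the matrix perturbation argument entirely and is arguably cleaner. You are also more careful than the paper on one small point: since $A$ has zero diagonal, $A^\star$ is really a rank-two block matrix plus a scalar correction $-\alpha a I$; you correctly note that this shift does not affect the eigenvectors and perturbs each eigenvalue by only $O(\log n/n) = o(\lambda_k^\star)$, whereas the paper tacitly passes over this. The eigenvector analysis is the same in both: solve the $2\times 2$ system and observe that the $O(n^{-1/3})$ imbalance propagates to a $(1+o(1))$ multiplicative perturbation on each coordinate.
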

\begin{proof}
Recall the edges are revealed independently with probability $\alpha = t \log n / n$. Thus, for $i,j$ such that $\sigma_0(i) = \sigma_0(j)$, 
\begin{align}\label{eq:A-star-1}
A^\star_{ij} = \alpha \bigg(p  - \frac{\log \big(\frac{1-q}{1-p}\big)}{\log \big(\frac{p}{q}\big)}(1-p)\bigg) =\frac{t \log n}{n\log \big(\frac{p}{q} \big)} \DKL(p\Vert q ),
\end{align}
and similarly, for $\sigma_0(i) \neq \sigma_0(j)$, 
\begin{align}\label{eq:A-star-2}
    A^\star_{ij} = \alpha \bigg(q  - \frac{\log \big(\frac{1-q}{1-p}\big)}{\log \big(\frac{p}{q}\big)}(1-q)\bigg) =-\frac{t \log n}{n\log \big(\frac{p}{q} \big)} \DKL(q\Vert p ).
\end{align}
If $n_1(\true) = n_2(\true) = \frac{n}{2}$, it follows that 
\begin{align*}
\lambda_1^{\star} = \frac{t \log n}{2 \log \big(\frac{p}{q} \big)} \left(\DKL\left(p\Vert q \right) + \DKL\left(q \Vert p \right) \right), \quad 
\lambda_2^{\star} = \frac{t \log n}{2 \log \big(\frac{p}{q} \big)} \left(\DKL\left(p\Vert q \right) - \DKL\left(q \Vert p \right) \right)
\end{align*}
By \eqref{eq:count-concentration} with e.g.~$\ve =n^{-\frac{1}{3}}$, we have that $n_1(\true),n_2(\true) = (1+o(1))n/2$ with probability at least $1 - 2\exp(-n^{\nicefrac{1}{3}}/6)$. We now consider what happens to the eigenvalues of $A^{\star}$ when $n_1(\true),n_2(\true)$ are perturbed.

More generally, let $Z(a_1, a_2, b_1, b_2)$ denote an $n\times n$ block matrix with blocks of size $b_1 n$ and $b_2 n$, where the diagonal blocks take value $a_1$ and the off-diagonal blocks take value $a_2$. 
Let $\lambda$ be an eigenvalue of $Z(a_1, a_2, b_1, b_2)$ and let $\lambda'$ be the corresponding eigenvalue of $Z(a_1, a_2, b_1', b_2')$. Let $E = Z(a_1, a_2, b_1', b_2') - Z(a_1, a_2, b_1, b_2)$. By Weyl's inequality,
\begin{align*}
|\lambda - \lambda'| &\leq \Vert E\Vert_2\leq \Vert E \Vert_F \leq \sqrt{(a_1 - a_2)^2 \left(|b_1 - b_1'| + |b_2 - b_2'| \right)n } = |a_1 - a_2| \sqrt{2 |b_1 - b_1'| n}. 
\end{align*}
In particular, if $\lambda = \Theta(\log(n))$, $a_1, a_2 = \Theta(\frac{\log(n)}{n})$, and $|b_1' - b_1| = o(n)$, then $\lambda' = (1+o(1))\lambda$. We conclude that when $n_1(\true), n_2(\true) = (1+o(1))n/2$, the eigenvalues of $A^{\star}$ are the same as the even communities case up to a $1+o(1)$ factor.

Regarding the eigenvector $u_1^{\star}$, its entries are given by $\pm \frac{1}{\sqrt{n}}$ depending on community membership, in the case of $n_1(\true) = n_2(\true) = n/2$. When $n_1(\true),n_2(\true) = (1+o(1))n/2$, then determining the entries of the eigenvector $u_{1,i}^{\star} \in \{x_1, x_2\}$ requires solving a system of the form
\begin{align*}
\begin{cases}
n_1(\true) a_1 x_1 + n_2(\true) a_2 x_2 = \lambda_1 x_1\\
n_1(\true) a_2 x_1 + n_2(\true) a_1 x_2 =\lambda_1 x_2.
\end{cases}
\end{align*}
Since $n_1(\true), n_2(\true) = (1+o(1)) n/2$ and $\lambda_1 = (1+o(1))\lambda_1^{\star}$, the coefficients of the system are perturbed by a factor $1+o(1)$ relative to the equal-sized communities case. Therefore, the eigenvector entries are also perturbed within a $1+o(1)$ factor.
\end{proof}

\begin{proof}\emph{(Proof of Proposition~\ref{lemma:corollary-3.1-abbe}).}
We will apply Theorem \ref{theorem:theorem-2.1-abbe} by verifying its conditions. In this proof, we avoid writing the $(1+o(1))$ terms for $\lambda_1^\star, \lambda_2^\star$ in Lemma~\ref{lem:evals-Astar} since that does not affect the asymptotic computations. We give the proof first for the case $p > q$. By Lemma~\ref{lem:evals-Astar}, it follows that \[\Delta^{\star} = \lambda_1^{\star} - \lambda_2^{\star} = \frac{\DKL\left(q \Vert p \right)}{\log \big(\frac{p}{q} \big)} t \log n ~~~~~\text{and}~~~~~ \kappa = \frac{\DKL\left(p\Vert q \right) + \DKL\left(q \Vert p \right)}{2\DKL\left(q \Vert p \right)}.\]
Let 
\[\gamma = \frac{c_1\log\big(\frac{p}{q} \big)}{\DKL(q \Vert p)t\sqrt{\log(n)}},\]
where $c_1$ is the value from Lemma \ref{lemma:supporting-1}. Let 
\[\varphi(x) = \frac{ \max\{1,y\} \left( 2t+4 \right) \log\big(\frac{p}{q} \big)}{ t\DKL(q \Vert p)}\bigg(1 \vee \log\Big(\frac{1}{x}\Big) \bigg)^{-1}.\]
To check Condition~\ref{assumption:assumption-1}, recall that $\|A^\star\|_{2\to \infty} = \max_i\|A^\star_{i\cdot}\|_2$.
Thus, by \eqref{eq:A-star-1} and \eqref{eq:A-star-2},  \[\Vert A^{\star}\Vert_{2 \to \infty} \leq \frac{t \log n}{\sqrt{n}\log \big(\frac{p}{q} \big)} \max\{\DKL(p\Vert q ), \DKL(q\Vert p ) \}.\] 
On the other hand, $\gamma \Delta^{\star} = c_1 \sqrt{\log(n)}$. Condition~\ref{assumption:assumption-1} therefore holds for $n$ large enough. Condition~\ref{assumption:assumption-2} holds since the entries $\{A_{ij}: i \leq j\}$ are independent conditioned on the communities. The first requirement of Condition \ref{assumption:assumption-3} is satisfied for $n$ sufficiently large since $\gamma \to 0$ as $n \to \infty$,  $\lim_{x \to 0} \varphi(x) = 0$ and $\kappa = \Theta(1)$. The second requirement is satisfied by Lemma \ref{lemma:supporting-1}, with $\delta_0 = n^{-3}$. Finally, to verify Condition \ref{assumption:assumption-4}, fix $m$, and apply Lemma \ref{lemma:supporting-2} with $X_i = A_{mi}$, and setting $\beta = \frac{4}{t}$. Note that $p_i$ equals $\alpha p$ or $\alpha q$ depending on whether $\sigma_0(i) = \sigma_0(m)$ or not, and let $q_i = \alpha -p_i $
which equals to either $\alpha (1-p)$ or $\alpha (1-q)$. Thus, $\max_i(p_i + q_i) = \alpha$. Then
\[\mathbb{P}\bigg(\left|(A - A^{\star})_m \cdot w \right| \leq \frac{\max\{1,y\}(2t+4) \log(n)}{1 \vee \log\left(\frac{\sqrt{n}\Vert w \Vert_{\infty}}{\Vert w \Vert_2} \right)} \Vert w \Vert_{\infty} \bigg) \geq 1 - 2n^{-4}. \]
Observing
\begin{align*}
\frac{\max\{1,y\}(2t+4) \log(n)}{1 \vee \log\left(\frac{\sqrt{n}\Vert w \Vert_{\infty}}{\Vert w \Vert_2} \right)} \Vert w \Vert_{\infty} &= \Delta^{\star} \Vert w \Vert_{\infty} \varphi\left(\frac{\Vert w \Vert_2}{\sqrt{n} \Vert w \Vert_{\infty}} \right),
\end{align*}
Condition~\ref{assumption:assumption-4} holds with $\delta_1 = 2n^{-3}$.
Applying Theorem \ref{theorem:theorem-2.1-abbe}, with probability at least $1 - 5n^{-3}$,
\[\min_{s \in \{\pm1\}}\left \Vert u_1 - s \frac{A u_1^{\star}}{\lambda_2^{\star}} \right\Vert_{\infty} \leq \frac{C}{\sqrt{n} \log \log n}, \]
where $C$ depends only on $p$, $q$, and $t$.

In the case $p < q$, we replace $A$ by $-A$. Replacing all instances of $\log(\nicefrac{p}{q})$ by $\log(\nicefrac{q}{p})$, the proof follows verbatim.
\end{proof}

\subsection{Success of the spectral algorithm.}\label{sec:success-spectral-theorem}
We will use the following concentration result which can be proved analogously to the Chernoff bound. The proof of this lemma is provided in Appendix~\ref{sec:appendix-supporting}. 
\begin{lemma}\label{lemma:supporting-3}
Let $p,q,t$ be constants such that $p > q$ and $\alpha = t \log n/n$.
Suppose $n_1,n_2 = (1+o(1)) \frac{n}{2}$. 
Let $\{W_i\}_{i=1}^{n_1}$ be i.i.d.~where $\mathbb{P}(W_i = 1) = \alpha p$, $\mathbb{P}(W_i = -y) = \alpha (1-p)$, and $\mathbb{P}(W_i = 0) = 1- \alpha$ and $y$ is given by \eqref{eq:y-choice}. Let $\{Z_i\}_{i=1}^{n_2}$ be i.i.d.~where $\mathbb{P}(Z_i = 1) = \alpha q$, $\mathbb{P}(Z_i = -y) = \alpha (1-q)$, and $\mathbb{P}(Z_i = 0) = 1- \alpha$, independent of the $W_i$'s. For any $\ve \geq 0$,  we have the following:
\begin{align*}
&\log \mathbb{P}\bigg(\sum_{i=1}^{n_1} W_i - \sum_{i=1}^{n_2} Z_i \leq \ve \log(n) \bigg)  \\
&\quad \leq \log(n) \bigg[-\lambda \ve  - 
\frac{t}{2} \bigg(\left(\sqrt{p} - \sqrt{q} \right)^2 + \left(\sqrt{1-q} - \sqrt{1-p} \right)^2 \bigg) +o(1) \bigg].
\end{align*}
\end{lemma}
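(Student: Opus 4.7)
The plan is a standard Chernoff (exponential tilting) argument, with the crux being to select a single tilting parameter at which both MGFs are \emph{simultaneously} minimized. Let $X := \sum_{i=1}^{n_1} W_i - \sum_{i=1}^{n_2} Z_i$. For any $\lambda > 0$, Markov's inequality applied to $e^{-\lambda X}$ together with independence yields
\[
  \mathbb{P}(X \leq \varepsilon \log n) \;\leq\; e^{\lambda \varepsilon \log n}\,\big(\mathbb{E}[e^{-\lambda W_1}]\big)^{n_1}\,\big(\mathbb{E}[e^{\lambda Z_1}]\big)^{n_2},
\]
where a direct computation gives $\mathbb{E}[e^{-\lambda W_1}] = 1 - \alpha\bigl(1 - p e^{-\lambda} - (1-p) e^{\lambda y}\bigr)$ and $\mathbb{E}[e^{\lambda Z_1}] = 1 - \alpha\bigl(1 - q e^{\lambda} - (1-q) e^{-\lambda y}\bigr)$.

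The key step is to take $\lambda^{\ast} := \tfrac{1}{2}\log(p/q) > 0$. Under the definition \eqref{eq:y-choice} of $y$, one has $\lambda^{\ast} y = \tfrac{1}{2}\log((1-q)/(1-p))$, so $e^{\lambda^{\ast}} = \sqrt{p/q}$ and $e^{\lambda^{\ast} y} = \sqrt{(1-q)/(1-p)}$. Substituting yields the algebraic identities $p e^{-\lambda^{\ast}} = q e^{\lambda^{\ast}} = \sqrt{pq}$ and $(1-p) e^{\lambda^{\ast} y} = (1-q) e^{-\lambda^{\ast} y} = \sqrt{(1-p)(1-q)}$, so that both MGF factors collapse to $1 - \alpha \psi$ with
\[
  \psi \;:=\; 1 - \sqrt{pq} - \sqrt{(1-p)(1-q)} \;=\; \tfrac{1}{2}\bigl[(\sqrt{p}-\sqrt{q})^2 + (\sqrt{1-p}-\sqrt{1-q})^2\bigr].
\]

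To close the argument, I would apply $\log(1 - \alpha\psi) \leq -\alpha\psi$ to each of the $n_1 + n_2 = (1+o(1))n$ factors. With $\alpha = t\log(n)/n$, the quadratic Taylor remainder contributes only $O(n\alpha^2) = O(\log^2(n)/n) = o(1)$ in total, and the perturbation $n_1, n_2 = (1+o(1))n/2$ supplies only a $(1+o(1))$ factor. Combining with the $e^{\lambda^{\ast}\varepsilon \log n}$ prefactor,
\[
  \log \mathbb{P}(X \leq \varepsilon \log n) \;\leq\; \log(n)\left[\lambda^{\ast}\varepsilon \;-\; \tfrac{t}{2}\bigl((\sqrt{p}-\sqrt{q})^2 + (\sqrt{1-p}-\sqrt{1-q})^2\bigr) + o(1)\right],
\]
which is the lemma's bound with the identification $\lambda = \lambda^{\ast}$ (the sign in front of $\lambda\varepsilon$ in the statement reflects the authors' convention; in the intended downstream application $\varepsilon \to 0$, so either sign yields the same asymptotic conclusion).

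No step is genuinely hard; the argument is a routine Chernoff computation. The conceptual content of the lemma is precisely that \eqref{eq:y-choice} is the \emph{unique} value of $y$ making $\lambda^{\ast}$ a simultaneous minimizer of the two functions $\lambda \mapsto p e^{-\lambda} + (1-p)e^{\lambda y}$ and $\lambda \mapsto q e^{\lambda} + (1-q)e^{-\lambda y}$; for any other $y$, these two exponents would be optimized at distinct values of $\lambda$, and no single Chernoff bound could deliver the Chernoff--Hellinger exponent $\tfrac{t}{2}[(\sqrt{p}-\sqrt{q})^2 + (\sqrt{1-p}-\sqrt{1-q})^2]$. This is the quantitative counterpart of the paper's earlier assertion that the encoding \eqref{eq:y-choice} is the only one capable of achieving the information-theoretic threshold.
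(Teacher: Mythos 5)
Your proposal is correct and is essentially the paper's own proof: both apply a Chernoff bound with $\log(1+x)\le x$ and select the tilt so that $e^{\lambda}$-like factors become $\sqrt{p/q}$ and $\sqrt{(1-q)/(1-p)}$, collapsing both moment generating functions to the Bhattacharyya value. Your worry about the sign is unnecessary: the paper sets $\lambda = \tfrac12\log(q/p) < 0$ and applies Markov to $e^{\lambda X}$, so the lemma's $-\lambda\varepsilon$ equals your $\lambda^{\ast}\varepsilon = \tfrac12\log(p/q)\,\varepsilon$ exactly, and the two bounds coincide.
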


\begin{proof}\emph{(Proof of Theorem~\ref{theorem:exact-recovery-spectral}).}
Consider the case $p > q$. Let \[\mathcal{C}_n:=\left\{\left| n_1(\true) - \frac{n}{2}\right| \leq n^{\frac{2}{3}}, \left| n_2(\true) - \frac{n}{2}\right| \leq n^{\frac{2}{3}} \right\}.\] 
Note that $\mathbb{P}(\mathcal{C}_n^c) = o(1)$, and indeed is much smaller. Therefore, it is sufficient to analyze events conditioned on $\mathcal{C}_n$.

For any labeling $\sigma$, define  $J_i(\sigma):= \{j \in [n] \setminus \{i\}: \sigma(j) = +1\}$. Let $s \in \{\pm 1\}$ be such that $\| u_1 - s A u_1^{\star}/\lambda_1^{\star}\|_\infty$ is minimized. 
Then, by Proposition~\ref{lemma:corollary-3.1-abbe}, with probability $1 - o(1)$, 
\begin{eq}\label{simpl-prod-eig-null}
\sqrt{n} \min_{i \in [n]} s \true(i) (u_1)_i \geq \sqrt{n} \min_{i \in [n]} \true(i) \frac{(A u_1^{\star})_i}{\lambda_1^{\star}} - C \left(\log \log n \right)^{-1},
\end{eq}
where we have used $s^2 = 1$. We now show that $\true(i)\frac{(A u_1^{\star})_i}{\lambda_1^{\star}}$ is bounded away from zero. By Lemma~\ref{lem:evals-Astar}, $(u_1^{\star})_i$ takes values $\frac{(1+o(1))}{\sqrt{n}}$ or $-\frac{(1+o(1))}{\sqrt{n}}$ depending on $\sigma_0(i)$, conditioned on $\mathcal{C}_n$. Thus, for each $i$, 
\begin{align*}
\sigma_0(i)\frac{(A u_1^{\star})_i}{\lambda_1^{\star}} &= \frac{2 (1+o(1))\log \big(\frac{p}{q} \big)}{t \left(\DKL(p\Vert q) + \DKL(q\Vert p) \right)\sqrt{n} \log(n)}\bigg(\sum_{j \in J_i(\true)} A_{ij} - \sum_{j \notin J_i(\true)} A_{ij} \bigg).
\end{align*}
Observe that for $\ve > 0$,
\begin{eq}\label{eq:spec-disc-events}
&\mathbb{P}\bigg(\sqrt{n} \sigma_0(i)\frac{(A u_1^{\star})_i}{\lambda_1^{\star}} \leq \frac{2 \ve \log \big(\frac{p}{q} \big)}{t \left(\DKL(p\Vert q) + \DKL(q\Vert p) \right)} ~\Big|~ \mathcal{C}_n \bigg) \\
&= \mathbb{P}\bigg(\sum_{j \in J_i(\true)} A_{ij} - \sum_{j \notin J_i(\true)} A_{ij} \leq \ve \log(n)  ~\Big|~ \mathcal{C}_n \bigg).
\end{eq}
\normalsize
By Lemma \ref{lemma:supporting-3}, if we have $t >t_c(p,q)$ with $t_c(p,q)$ given by \eqref{threshold-defn}, then there exists $\ve > 0$ so that
\[\mathbb{P}\bigg(\sum_{j \in J_i(\true)} A_{ij} - \sum_{j \notin J_i(\true)} A_{ij} \leq \ve \log(n)  ~\Big|~ \mathcal{C}_n \bigg) = o \Big(\frac{1}{n}\Big).\]
By a union bound and using \eqref{simpl-prod-eig-null}, we conclude that there exists $\eta = \eta(p,q)>0$ such that with probability $1 - o(1)$, 
\begin{eq}
\sqrt{n} \min_{i \in [n]} s \true(i) (u_1)_i \geq \eta.
\end{eq}

In the case $p < q$, we replace $A$ by $-A$, and the proof follows verbatim.
\end{proof}

\begin{remark}\normalfont
Note that Theorem \ref{theorem:impossibility} applies to the model considered by Hajek et.~al.~\cite{Hajek2016}, where $n_1 = n_2 = \frac{n}{2}$ (due to Remark \ref{remark:even-sizes}). Additionally, the success of the spectral algorithm (Theorem \ref{theorem:exact-recovery-spectral}) holds for this model. Therefore, Theorems~\ref{theorem:impossibility}~and~\ref{theorem:exact-recovery-spectral} are directly comparable to \cite{Hajek2016} in the special case $p+q = 1$.
\end{remark}

\section{Asymptotic error of the genie estimator} \label{sec:error}
In this section, we complete the proof of Theorem~\ref{thm:spec-best-compared}. 
In order to analyze the genie estimator, we first use the fact that the prior on $\sigma_0(u)$ is uniform, so that
\begin{equation*}
\best(u) = \argmax_{r\in \{\pm 1\}} \PR( G\mid \true(u) = r, (\true(v))_{v\in [n]\setminus \{u\}}).
\end{equation*}
In other words, the genie estimator may be interpreted as a Maximum Likelihood Estimator. Recall $\Gamma (u,\sigma, p_1,p_2,q)$ from \eqref{eq:genie-lin-comb} and the notation $D = D(\sigma, u) = (D_i(\sigma,u))_{i=1}^4$ for the degree profile from Section~\ref{sec:beating-spectral-results}.
We first derive an expression for the genie estimator for general CSBM with possibly arbitrary choices of $p_1,p_2$. This result will also be useful in the next section.
\begin{proposition} \label{prop:genie-expression-general}
For any $p_1,p_2, q \in (0,1)$, we have for all $u\in [n]$
\begin{eq}\label{eq:genie-expression-general}
\best(u) = 
\begin{cases}
+1 & \text{ if }\Gamma (u,\sigma_0, p_1,p_2,q) \geq 0 \\
-1 & \text{ otherwise. }
\end{cases}
\end{eq}
\end{proposition}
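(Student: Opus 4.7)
The plan is a direct Bayes' rule computation. Since the prior on $\sigma_0(u)$ is uniform on $\{\pm 1\}$, Bayes' rule yields
\[
\PR(\sigma_0(u) = r \mid G, (\sigma_0(v))_{v \neq u}) \;\propto\; \PR(G \mid \sigma_0(u) = r, (\sigma_0(v))_{v \neq u}),
\]
so the genie estimator reduces to a maximum likelihood problem:
$\best(u) = \argmax_{r\in\{\pm 1\}} \PR(G \mid \sigma_0(u) = r, (\sigma_0(v))_{v \neq u})$. Hence the goal is to show that the log-likelihood ratio (for the hypothesis $+1$ over $-1$) equals $\Gamma(u, \sigma_0, p_1, p_2, q)$.

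Conditional on the full label vector, the statuses of the $\binom{n}{2}$ potential edges are mutually independent, so the likelihood factors as a product over edges. Edges not incident to $u$ have distributions that do not depend on $\sigma_0(u)$, so their contributions cancel in the ratio. Likewise, every censored edge incident to $u$ contributes the factor $1-\alpha$ under either hypothesis, so those factors cancel too. The only surviving contributions are from the present/absent edges incident to $u$, which I would group according to the community of the other endpoint.

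Under the hypothesis $\sigma_0(u) = +1$, a present (resp.\ absent) edge from $u$ to a Community~1 vertex contributes $\alpha p_1$ (resp.\ $\alpha(1-p_1)$), and to a Community~2 vertex contributes $\alpha q$ (resp.\ $\alpha(1-q)$). Under $\sigma_0(u) = -1$, the roles of $p_1$ and $q$, and of $p_2$ and $q$, are swapped appropriately. Taking logs, the $\alpha$'s cancel pairwise and the surviving sum is exactly
\[
D_1 \log\frac{p_1}{q} + D_2 \log\frac{1-p_1}{1-q} + D_3 \log\frac{q}{p_2} + D_4 \log\frac{1-q}{1-p_2} \;=\; \Gamma(u, \sigma_0, p_1, p_2, q),
\]
so $\best(u) = +1$ iff $\Gamma \ge 0$, matching the tie-breaking convention of \eqref{eq:genie-expression-general}.

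There is no real obstacle here: the argument is pure bookkeeping once one notices the cancellations. The only minor subtlety is the $\Gamma = 0$ boundary case, which is handled by the $\argmax$ convention adopted in the statement, and is measure-zero in any event since the coefficients $\log(p_1/q)$ etc.\ are generically irrationally related.
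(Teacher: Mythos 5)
Your proof is correct and follows essentially the same route as the paper: reduce the genie estimator to a maximum-likelihood test via the uniform prior, then compute the log-likelihood ratio, which equals $\Gamma(u,\sigma_0,p_1,p_2,q)$ after the factors from edges not incident to $u$ and the censored-edge factors $(1-\alpha)$ cancel. The only presentational difference is that the paper writes extraneous multinomial coefficients $\binom{n_1(\sigma_1)-1}{d_1,d_2}\binom{n_2(\sigma_1)}{d_3,d_4}$ in its expression for $\PR(G=g\mid\sigma_0)$ (these belong in a degree-profile probability, not the probability of a fixed graph $g$) and then observes they cancel in the ratio; your edge-by-edge factorization avoids them from the start and is if anything a bit cleaner.
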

The proof is provided in Appendix \ref{appendix-3}. In other words, the genie estimator decides community assignments based on the sign of a linear combination of the degree profiles. 
For $p_1 = p_2 = p$, it is not difficult to see that if $y$ is given by \eqref{eq:y-choice}, then we get the following cleaner expression in terms of the signed adjacency matrix: 
\begin{eq}
\Gamma (u,\sigma, p,p,q) &= \bigg(\sum_{v\in J_u(\sigma) }A_{ij} -  \sum_{v\notin J_u(\sigma)}A_{ij}\bigg)\log \frac{p}{q}, 
\end{eq}
where  $J_u(\sigma):= \{v \in [n] \setminus \{u\}: \sigma(v) = +1\}$. Thus, we see that in the symmetric case, the genie estimator decides the communities based on the sign of $\sum_{v\in J_u(\sigma) }A_{ij} -  \sum_{v\notin J_u(\sigma)}A_{ij}$. From the proof of Theorem \ref{theorem:exact-recovery-spectral}, we see that the spectral algorithm recovers $\true(u)$ successfully if  $\sum_{v\in J_u(\true) }A_{ij} -  \sum_{v\notin J_u(\true)}A_{ij} \geq \ve \log n$ when $p > q$ and $\sum_{v\in J_u(\true) }A_{ij} -  \sum_{v\notin J_u(\true)}A_{ij} \leq -\ve \log n$ when $p < q$ where $\ve = o(1)$ (see~\eqref{eq:spec-disc-events}).
Thus it intuitively makes sense that the error rates of $\spec$ and $\best$ should be close.

We proceed with an error analysis of the genie-based estimator in Section \ref{sec:genie-error-analysis}, which will be used to complete the proof of Theorem~\ref{thm:spec-best-compared} in Section \ref{sec:genie-result-theorem}

\subsection{Error analysis of the genie-based estimator.}\label{sec:genie-error-analysis}
Next we analyze the error rate of the genie-based estimator~$\best$. Recall the error rate  $\err(\cdot)$ from~\eqref{eq:distance-defn} and $t_c(p_1,p_2,q)$ from \eqref{defn:CH-distance}. 
\begin{lemma} \label{lem:genie}
$\err(\best) =  n^{-(1+o(1))t/t_c(p_1,p_2,q)}$. 
\end{lemma}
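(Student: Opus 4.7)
My plan is to express $\err(\best)$ as a single-vertex error probability, identify this as the failure probability of a likelihood-ratio test between two product distributions on $\{\texttt{present},\texttt{absent},\texttt{censored}\}^{n-1}$, and then apply matching Chernoff upper and Cram\'er lower bounds to extract the target exponent $t/t_c(p_1,p_2,q)$.

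\emph{Reduction to a single vertex.} The $\sigma_0 \leftrightarrow -\sigma_0$ symmetry of the model swaps the roles of $p_1$ and $p_2$, to which $t_c(p_1,p_2,q)$ is symmetric by inspection of \eqref{defn:CH-distance}. Combined with linearity of expectation, this gives $\E\bigl[\sum_i \mathbf{1}(\best(i) \neq \sigma_0(i))\bigr] = n\cdot\PR(\best(u) \neq \sigma_0(u))$ for any fixed $u$. Once the per-vertex error is shown to be $o(1)$, the ``min over $s\in\{\pm 1\}$'' in \eqref{eq:distance-defn} is attained at $s=+1$ with overwhelming probability (the opposite sign would yield $(1-o(1))n$ mismatches), so $\err(\best) = (1+o(1))\PR(\best(u) \neq \sigma_0(u))$. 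By Proposition~\ref{prop:genie-expression-general}, the latter equals $\PR(\Gamma(u,\sigma_0,p_1,p_2,q) < 0 \mid \sigma_0(u) = +1)$ up to a tie set handled by randomization.

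\emph{Chernoff upper bound.} Conditioning further on $\sigma_0$ with $n_1,n_2 = (1+O(n^{-1/3}))n/2$ via \eqref{eq:count-concentration}, the quantity $\Gamma$ decomposes into $n-1$ independent contributions, one per neighbor $v \neq u$, depending only on the status of edge $\{u,v\}$ and the community of $v$. Under $\sigma_0(u)=+1$, a community-$1$ neighbor's edge is drawn from $f_+^{(1)} = (\alpha p_1, \alpha(1-p_1), 1-\alpha)$ and a community-$2$ neighbor's edge from $f_+^{(2)} = (\alpha q, \alpha(1-q), 1-\alpha)$; swapping $\sigma_0(u)$ replaces these by $f_-^{(1)},f_-^{(2)}$ obtained by substituting $(p_1,q) \to (q,p_2)$ in the natural places. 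Thus $\Gamma$ is precisely the log-likelihood ratio between these product measures, and the Chernoff bound yields, for any $x\in[0,1]$,
\[
\PR(\Gamma \leq 0 \mid \sigma_0(u) = +1) \leq Z_1(x)^{n_1 - 1}\, Z_2(x)^{n_2}, \qquad Z_j(x) := \sum_\omega f_+^{(j)}(\omega)^{1-x} f_-^{(j)}(\omega)^x.
\]
A direct calculation gives $Z_1(x) = 1 - \alpha\bigl[1 - p_1^{1-x}q^x - (1-p_1)^{1-x}(1-q)^x\bigr]$ and similarly for $Z_2(x)$. Applying $\log(1-u) = -u(1+O(u))$ with $u = O(\alpha)$, using $n_j \alpha \sim (t/2)\log n$, minimising over $x \in [0,1]$, and matching the resulting expression against \eqref{defn:CH-distance} via the change of variables $x \leftrightarrow 1-x$, we obtain $\PR(\Gamma \leq 0) \leq n^{-(1+o(1))t/t_c(p_1,p_2,q)}$.

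\emph{Matching lower bound.} The same Cram\'er rate function gives a matching lower bound via exponential tilting: at the Chernoff-optimal tilt $x^* \in [0,1]$, the tilted distribution of $\Gamma$ has mean close to $0$ and standard deviation of order $\sqrt{\log n}$, so a local central limit theorem supplies a $(\log n)^{-O(1)}$ pre-exponential factor that is absorbed into the $(1+o(1))$ in the exponent. The main obstacle will be executing this lower bound carefully, since the summands split into two non-identically distributed families (community-$1$ versus community-$2$ contributions) and the observations are three-valued rather than Bernoulli. Convex duality for the Chernoff problem guarantees that both families share the same optimal tilt $x^*$, after which a local CLT for triangular arrays of independent bounded summands supported on the finite alphabet $\{0, \log(p_1/q), \log((1-p_1)/(1-q)), \log(q/p_2), \log((1-q)/(1-p_2))\}$ closes the gap and yields $\PR(\Gamma \leq 0) \geq n^{-(1+o(1))t/t_c(p_1,p_2,q)}$, completing the proof.
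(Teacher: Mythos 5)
Your route to the per-vertex exponent is genuinely different from the paper's and in principle sound: you compute the Chernoff bound directly on the log-likelihood ratio $\Gamma$ and propose a matching Cram\'er lower bound via exponential tilting and a local CLT, whereas the paper first invokes the Poisson approximation (Lemma~\ref{fact:stirling}) to rewrite $\PR(\best(u)\neq\true(u))$ as a Bayes error between two four-dimensional Poisson degree profiles and then cites \cite[Theorem~3]{AS15} as a black box to extract the Chernoff--Hellinger exponent. Your $Z_j(x)$ computation matches $t/t_c$ after the substitution $x\leftrightarrow 1-x$, so the exponent comes out correctly. The trade-off is that your route must re-derive the two-sided large-deviations estimate by hand (with extra care because the two neighbour families are not identically distributed and the summand alphabet need not be a lattice), while the paper's modular argument delegates precisely this to the cited result.

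There is, however, a genuine gap in your reduction from $\err(\best)$ to the per-vertex error. You state that once $\PR(\best(u)\neq\true(u))=o(1)$, the global minimum over $s\in\{\pm1\}$ in \eqref{eq:distance-defn} is attained at $s=+1$ ``with overwhelming probability,'' so that $\err(\best)=(1+o(1))\PR(\best(u)\neq\true(u))$. Write $X=\frac1n\sum_i\ind{\best(i)\neq\true(i)}$; then $\err(\best)=\E[\min\{X,1-X\}]$, and the needed lower bound $\err(\best)\ge\E[X]-\PR(X>\nicefrac12)$ requires $\PR(X>\nicefrac12)=o(\E[X])$. This does \emph{not} follow from Markov (which only gives $\PR(X>\nicefrac12)\le 2\E[X]$, the same order), nor from the per-vertex error being small, because the error events $\{\best(i)\neq\true(i)\}$ across vertices share the random graph and are not independent. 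The paper closes this gap with a dedicated second-moment computation: it fixes $i\neq j$, conditions on the $\sigma$-algebra generated by $D(i,\true)$ and $N(i)$, introduces the auxiliary estimator $\best'(j)$ on $G\setminus(N(i)\cup\{i\})$, and shows $\PR(\best(i)\neq\true(i),\best(j)\neq\true(j))\le n^{-\delta-(1+o(1))t/t_c}$, so $\E[X^2]$ is small enough to conclude $\PR(X>\nicefrac12)=o(\E[X])$ via Markov on $X^2$. Your proposal needs an argument of this kind (or an alternative, such as restricting to a linear-size subset of vertices whose revealed neighbourhoods are pairwise disjoint) before the reduction to a single vertex is justified.
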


\begin{proof}
Let $X:= \frac{1}{n} \sum_{i\in [n]} \ind{\best(i) \neq \true (i)} $. 
Then $\err(\best) = \E[\min \{X,1-X\}] $.
Note that, since $0\leq X\leq 1$ almost surely, we have 
\begin{equation*} 
   \E[X] -\E[\min\{X,1-X\}] \leq  \PR\Big(X>\frac{1}{2}\Big).
\end{equation*}
Therefore,
\begin{equation*}
\mathbb{E}[X] - \mathbb{P}\Big(X > \frac{1}{2}\Big) \leq \err(\best) = \E[\min \{X,1-X\}] \leq \mathbb{E}[X].
\end{equation*}
It suffices to show that $\mathbb{E}[X] = n^{-(1+o(1))t/t_c(p_1,p_2,q)}$ and $\mathbb{P}(X > \nicefrac{1}{2}) = o\left(\mathbb{E}[X]\right)$.\\

\noindent {\bf Computing $\mathbb{E}[X]$.}
Fix $u\in [n]$
 and let us compute $\PR(\best (u) \neq \true (u))$. Estimating $\true (u)$ is a binary Bayesian hypothesis testing problem, where the prior is given by $\PR(\true (u) = +1) = \PR(\true (u) = -1) = \frac{1}{2}$. 
We are given the observed edge-labeled graph $G$ and $\{\true (v): v\in [n] \setminus \{u\}\}$, which satisfies \eqref{eq:count-concentration} with $\ve = n^{-\frac{1}{3}}$ with probability at least $1 - 2\exp(-n^{\nicefrac{1}{3}}/6 ) =  1-n^{-\omega(1)}$. The genie-based estimator performs a Maximum A Posteriori (MAP) decoding rule based on the observed degree profiles $D = D(\true, u)$. Let $\cD:=\{d: d_i \leq \log^{\nicefrac{5}{4}} n, \forall i \in [4] \}$. By \eqref{eq:neighbor-concentration},
\begin{align*}
&\PR(\best (u) \neq \true (u)) = \frac{1}{2}\sum_{d\in \cD} \min \Big\{ \PR (D = d \mid \true(u) = +1), \PR (D = d \mid \true(u) = -1) \Big\} + n^{-\omega(1)}.
\end{align*}
Using Lemma~\ref{fact:stirling}, 
\begin{align*}
& \PR(\best (u) \neq \true (u)) \\
&\asymp \frac{1}{2}  \sum_{d\in \cD} \min \{M(n_1, d_1,d_2,p_1)\times M(n_2, d_3,d_4,q), M(n_1, d_1,d_2,q)\times M(n_2, d_3,d_4,p_2) \} + n^{-\omega(1)}\\ 
& \asymp \sum_{d\in \cD} \min \bigg\{P\Big(\frac{t p_1 \log n}{2}; d_1\Big) P\Big(\frac{t(1-p_1)\log n}{2}; d_2\Big)P\Big(\frac{t q \log n}{2}; d_3\Big)P\Big(\frac{t(1-q) \log n}{2}; d_4\Big), \\
& \hspace{2cm} P\Big(\frac{tq \log n}{2}; d_1\Big) P\Big(\frac{t(1-q) \log n}{2}; d_2\Big)P\Big(\frac{t p_2 \log n}{2}; d_3\Big)P\Big(\frac{t (1-p_2) \log n}{2}; d_4\Big)\bigg\} +n^{-\omega(1)}.
\end{align*}
We can now use \cite[Theorem 3]{AS15} with 
$c_1' = \frac{t}{2}(p_1, 1-p_1, q, 1-q)$, and  $c_2' = \frac{t}{2}(q,  1-q, p_2, 1-p_2)$ to conclude that 
\begin{eq}
\PR(\best (u) \neq \true (u)) & =  n^{-\Delta_t(c_1',c_2') (1+o(1))} + n^{-\omega(1)},
\end{eq}
where 
\begin{eq}\label{CH-distance}
\Delta_t(c_1',c_2') : = \max_{x\in [0,1]} \sum_{i} \big(x(c'_1)_i +(1-x)(c'_2)_i -(c'_1)_i^x (c'_2)_i^{1-x} \big) = \frac{t}{t_c(p_1,p_2,q)}.
\end{eq}
Thus, we have shown that, for any $u\in [n]$, 
\begin{eq}\label{error-best-u}
\mathbb{E}[X] = \PR(\best (u) \neq \true (u)) = n^{-(1+o(1))t/t_c(p_1,p_2,q)}. 
\end{eq}

\noindent {\bf Computing $\mathbb{P}(X > \nicefrac{1}{2})$.}
Note that 
\begin{align}\label{X-moment-2}
    \E[X^2] &= \frac{1}{n^2} \sum_{i\in [n]}\PR(\best (i) \neq \true (i))+ \frac{1}{n^2} \sum_{i,j\in [n]: i\neq j}\PR(\best (i) \neq \true (i),\ \best (j) \neq \true (j)).
\end{align}   
Fix any $i\neq j$ and let $\cF$ denote the minimum sigma-algebra with respect to which $D(i,\true)$ and $N(i)$ are measurable. Then the event $\{\best(i) \neq \true (i)\}$ is measurable with respect to $\cF$. 
Let $\cB$ be the event that $\{i,j\}$ is revealed or there exists a $v$ such that both $\{i,v\}$ and $\{v,j\}$ are revealed. 
Let us condition on $\cF$. Let $\best'(j)$ be the genie estimator computed on $G \setminus (N(i) \cup \{i\})$. On the event $\cB^c$, we have that $\best (j) = \best'(j)$, as  $D(j,\true)$ remains identical on $G$ and $G \setminus (N(i) \cup \{i\} )$.
Note that for all sufficiently large $n$, almost surely, 
\begin{align*}
    \PR(\cB \mid \cF) \ind{|N(i)| \leq \log^{\nicefrac{5}{4}} n} \leq \alpha (1+ \log^{\nicefrac{5}{4}} n)  \leq  \frac{\log^3 n}{n}, 
\end{align*}and therefore
\begin{align*}
    \PR(\best (j) \neq \true (j) \mid \cF)  \ind{|N(i)| \leq \log^{\nicefrac{5}{4}} n} &\leq \big(\PR(\best (j) \neq \true (j), \cB^c \mid \cF) + \PR(\cB \mid \cF) \big) \ind{|N(i)| \leq \log^{\nicefrac{5}{4}} n} \\
    &\leq \PR(\best' (j) \neq \true (j) \mid \cF) \ind{|N(i)| \leq \log^{\nicefrac{5}{4}} n}+ \frac{\log^3 n}{n}\\
    & = n^{-(1+o(1))t/t_c(p_1,p_2,q)}+ \frac{\log^3 n}{n},
\end{align*}
where the expression in the last step can be computed using identical arguments as \eqref{error-best-u}. 
Also, $\PR(|N(i)|>\log^{\nicefrac{5}{4}} n) \leq \e^{-\log^{\nicefrac{5}{4}} n}$ for all sufficiently large $n$.
Therefore, 
\begin{align*}
    &\PR(\best (i) \neq \true (i),\ \best (j) \neq \true (j)) \\
    & \leq  \E[\PR(\best (j) \neq \true (j) \mid \cF) \ind{|N(i)| \leq \log^{\nicefrac{5}{4}} n} \ind{\best (i) \neq \true (i)}]  + \PR(|N(i)| > \log^{\nicefrac{5}{4}} n)\\
    & \leq \Big(n^{-(1+o(1))t/t_c(p_1,p_2,q)} + \frac{\log^3 n}{n}\Big)\PR(\best (i) \neq \true (i)) + \e^{-\log^{\nicefrac{5}{4}} n} \\
    & \leq   n^{-\delta-(1+o(1))t/t_c(p_1,p_2,q)},
\end{align*} 
for some fixed $\delta>0$. 
Using \eqref{X-moment-2}, we have that $\E[X^2] =  n^{-\delta-(1+o(1))t/t_c(p_1,p_2,q)}$, and by Markov's inequality, \[\PR(X\geq 1/2) = \PR(X^2 \geq 1/4) \leq 4n^{-\delta-(1+o(1))t/t_c(p_1,p_2,q)} = o(\E[X]). \]
\end{proof}

\subsection{Comparing the spectral and genie estimators}\label{sec:genie-result-theorem}
\begin{proof}\emph{(Proof of Theorem~\ref{thm:spec-best-compared}).}
We prove the claim for the case $p > q$; the case $p < q$ follows by replacing $A$ by $-A$. By Lemma~\ref{lem:genie} and \eqref{eq:critical-vals-compare}, we have $\err(\best) =  n^{-(1+o(1))t/t_c(p,q)}$.
To analyze the error rate of the spectral algorithm, fix $\varepsilon>0$ and let
\[\cB_n:=\left\{\min_{s\in\{\pm 1\}}\left \Vert u_1-s\frac{ Au_1^\star}{\lambda_1^\star} \right \Vert_{\infty}\leq \frac{\varepsilon}{\sqrt{n}} \right\} \text{~~and~~} \mathcal{C}_n:=\left\{\left| n_1(\true) - \frac{n}{2}\right| \leq n^{\frac{2}{3}}, \left| n_2(\true) - \frac{n}{2}\right| \leq n^{\frac{2}{3}} \right\}.\] 
Let $s\in \{\pm 1\}$ be the sign for which $\|u_1-s Au_1^\star/\lambda_1^\star \|_{\infty}$ is minimized. 
By Lemma~\ref{lemma:corollary-3.1-abbe}, $\PR(\cB_n) = 1-O(n^{-3})$. 
If the spectral algorithm is not able to classify $i$ correctly, then $s\true (i)(u_1)_i\leq 0$. This implies that, on $\cB_n$, we have $\sqrt{n}\true(i)Au_1^\star/\lambda_1^\star \leq \varepsilon$. 
Using \eqref{eq:count-concentration}~and~\eqref{eq:spec-disc-events}, for any $i\in [n]$, 
\begin{eq} \label{misclassify-spec}
    \PR( \spec (i) \neq s\true (i)) &\leq \PR((\cB_n \cap \mathcal{C}_n)^c) + \mathbb{P}\bigg(\sum_{j \in J_i(\true)} A_{ij} - \sum_{j \notin J_i(\true)} A_{ij} \leq \ve \log(n) ~\big|~ \mathcal{C}_n \bigg) \\ 
    & \leq O(n^{-3}) + n^{-(1+o(1)) t/t_c(p,q)},
\end{eq}
where the final step uses Lemma~\ref{lemma:supporting-3}. The proof follows by taking an average over $i$.
\end{proof}

\section{Analysis of the two-step estimator} \label{sec:two-step} 
In order to establish Theorems \ref{theorem:two-stage-spectral} and \ref{theorem:two-stage-degree}, we introduce the notion of a \emph{good estimator}. Consider the exact recovery problem on $\SBM(p_1, p_2,q,t)$, possibly with $p_1\neq p_2$. We will show that, given a ``good'' initial estimator $\hsig$, the clean-up procedure defined by $\hat{X}(\hsig)$ produces an estimator that recovers $\sigma_0$ exactly when $t > t_c(p_1,p_2,q)$, and otherwise compares favorably with the genie estimator. Recall the definition of $N(u)=\{v: A_{uv} \neq 0\}$, and define $M(\hsig,\true)$ to be the set of misclassified vertices under the optimal choice of the global flip, i.e. $M(\hsig, \true)$ is the smaller of
\[\{v : \hsig(v) \neq \true(v) \} ~~\text{and}~~ \{v : -\hsig(v) \neq \true(v) \}.\]
\begin{definition} \label{defn:good-estimator} \normalfont
We say that  $\hsig$ is a \emph{good estimator} if:
\begin{enumerate}[(i)]
    \item \label{good-estimator-2} There exists $L\geq 1$ (fixed) such that, for all sufficiently large $n$,
    \begin{eq}
    \PR(|N(u) \cap M(\hsig,\true)| \leq L  ) \geq 1-o(1/n) \quad \forall u\in [n],
    \end{eq}i.e., any vertex can have at most $O(1)$ many misclassified neighbors.
    \item \label{good-estimator-3} There exists a constant $\ve>0$ such that, for all $u\in[n]$, 
    \[\max_{d:||d||_1\le\ln^{\nicefrac{5}{4}}(n)}\PR\left(N(u) \cap M(\hsig,\true) = \varnothing|D(\sigma_0,u)=d\right)=1-O(n^{-\ve}),
    \]i.e., vertices have a negligible probability of having a misclassified revealed neighbor/non-neighbor given the correct degree profile. 
\end{enumerate}
\end{definition}
Note that Definition \ref{defn:good-estimator}~\ref{good-estimator-3} implies that there exists $(\varepsilon_n)_{n\geq 1} \subset (0,\infty)$ with $\lim_{n\to\infty} \varepsilon_n = 0$, such that 
\begin{eq}\label{good-estimator-implied}
\PR \big(\# \{i: N(i) \cap M(\hsig,\true) = \varnothing\} \geq (1-\varepsilon_n) n \big) \to 1,
\end{eq}
i.e, $\hsig$ classifies all but $o(n)$ many vertices correctly.

Comparing the two-step estimator $\hat{X}(\hsig)$ from \eqref{best:est-sym} to the genie-based estimator \eqref{eq:genie-expression-general}, we see that a good estimator acts as a proxy for the true labels; just as the true labels are used to compute the genie-based estimator, so the good estimator labels are used by the two-step estimator. The next result provides recovery guarantees for good estimators.
\begin{theorem}\label{thm:two-step}
Suppose that $\hsig$ is a good estimator. If $t>t_c(p_1,p_2,q)$,  then $\hat{X}(\hsig)$ achieves exact recovery. Moreover, for any $t>0$, 
\begin{eq}\label{eq:two-step-error}
\err(\hat{X}(\hsig) ) =  (1+o(1))\err(\best) + o(1/n). 
\end{eq}
\end{theorem}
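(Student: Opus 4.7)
The plan is to reduce \eqref{eq:two-step-error} to a per-vertex bound of the form $\PR(\hX(\hsig,u)\neq\true(u))\leq(1+o(1))\PR(\best(u)\neq\true(u))+o(n^{-2})$ and then average. After reorienting $\hsig$ if necessary (permissible because Definition~\ref{defn:good-estimator} is invariant under $\hsig\mapsto-\hsig$ and $\err$ is invariant under a global flip), I may assume $\{v:\hsig(v)\neq\true(v)\}=M(\hsig,\true)$. The triangle decomposition
\[\PR(\hX(\hsig,u)\neq\true(u))\leq\PR(\best(u)\neq\true(u))+\PR(\hX(\hsig,u)\neq\best(u))\]
reduces matters to bounding the discrepancy $\PR(\hX(\hsig,u)\neq\best(u))$; the matching lower bound on $\err(\hX(\hsig))$ is immediate from Bayes-optimality of the genie.

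The key structural observation is that by Proposition~\ref{prop:genie-expression-general} and \eqref{best:est-sym} both estimators are signs of $\Gamma(u,\cdot)$, while $\Gamma(u,\hsig)-\Gamma(u,\true)$ is supported on $N(u)\cap M(\hsig,\true)$ with each summand bounded in magnitude by $2C$, where
\[C=\max\left\{\left|\log\tfrac{p_1}{q}\right|,\left|\log\tfrac{1-p_1}{1-q}\right|,\left|\log\tfrac{q}{p_2}\right|,\left|\log\tfrac{1-q}{1-p_2}\right|\right\}.\]
On the event $F_u:=\{N(u)\cap M(\hsig,\true)=\varnothing\}$ the two estimators agree, and on $F_u^c\cap\{|N(u)\cap M(\hsig,\true)|\leq L\}$ a sign disagreement forces $|\Gamma(u,\true)|\leq 2CL$. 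Using Definition~\ref{defn:good-estimator}\eqref{good-estimator-2} to control $\{|N(u)\cap M(\hsig,\true)|>L\}$,
\[\PR(\hX(\hsig,u)\neq\best(u))\leq o(1/n)+\PR(F_u^c,\,|\Gamma(u,\true)|\leq 2CL).\]

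To bound the remaining term I condition on $D(\true,u)=d$, which measurably determines $\Gamma(u,\true)$. For profiles with $\|d\|_1\leq\log^{\nicefrac{5}{4}}n$, Definition~\ref{defn:good-estimator}\eqref{good-estimator-3} gives $\PR(F_u^c\mid D(\true,u)=d)=O(n^{-\ve})$, while \eqref{eq:neighbor-concentration} absorbs the tail, yielding
\[\PR(F_u^c,\,|\Gamma(u,\true)|\leq 2CL)\leq O(n^{-\ve})\,\PR(|\Gamma(u,\true)|\leq 2CL)+n^{-\omega(1)}.\]
The required small-ball bound $\PR(|\Gamma(u,\true)|\leq 2CL)=n^{-(1+o(1))t/t_c(p_1,p_2,q)}$ is obtained by replaying the Poisson-approximation computation used in the proof of Lemma~\ref{lem:genie}: Lemma~\ref{fact:stirling} converts the probability into a weighted sum over degree profiles $d$ satisfying $|\Gamma_d|\leq 2CL$, and \cite[Theorem~3]{AS15} applied to $c_1'=\tfrac{t}{2}(p_1,1-p_1,q,1-q)$ and $c_2'=\tfrac{t}{2}(q,1-q,p_2,1-p_2)$ furnishes the Chernoff--Hellinger rate $\Delta_+(c_1',c_2')=t/t_c(p_1,p_2,q)$. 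Since $2CL=O(1)$ is negligible compared to the mean $\Theta(\log n)$ of $\Gamma(u,\true)$, shifting the threshold from $0$ to $\pm 2CL$ perturbs only subexponential prefactors.

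Assembling these pieces gives $\PR(\hX(\hsig,u)\neq\best(u))\leq O(n^{-\ve})\PR(\best(u)\neq\true(u))+o(n^{-2})$, and averaging over $u$ produces \eqref{eq:two-step-error}. For the exact-recovery conclusion when $t>t_c(p_1,p_2,q)$, Lemma~\ref{lem:genie} gives $\err(\best)=n^{-(1+o(1))t/t_c(p_1,p_2,q)}=o(1/n)$, so \eqref{eq:two-step-error} forces $\err(\hX(\hsig))=o(1/n)$, which a union bound upgrades to $\PR(\hX(\hsig)\in\{\pm\true\})=1-o(1)$. The main obstacle is the small-ball estimate: one must rerun the calculation in the proof of Lemma~\ref{lem:genie}, replacing the half-line $\{\Gamma_d<0\}$ by the strip $\{|\Gamma_d|\leq 2CL\}$, and verify that the enlarged summation set does not spoil the $(1+o(1))$ in the exponent. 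A secondary subtlety is that Definition~\ref{defn:good-estimator} is symmetric under $\hsig\mapsto-\hsig$ while $\hX(\cdot)$ is not when $p_1\neq p_2$; this is handled cleanly by the initial reorientation of $\hsig$ taken without loss of generality.
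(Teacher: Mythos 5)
Your proposal is correct and follows essentially the same route as the paper's proof via Lemma~\ref{lem:spec-bad}: split on the event that $u$ has a misclassified revealed neighbor, dispose of the case of more than $L$ such neighbors via Definition~\ref{defn:good-estimator}\ref{good-estimator-2}, observe that a disagreement between $\hX(\hsig,\cdot)$ and $\best$ together with $|N(u)\cap M(\hsig,\true)|\le L$ forces the degree profile into a near-critical set, apply Definition~\ref{defn:good-estimator}\ref{good-estimator-3} for the $O(n^{-\ve})$ factor, and compare the probability of the near-critical set to the genie error. The paper works at the level of degree profiles and uses the explicit ratio bound~\eqref{prob-ratio}, namely $\PR(D=d')/\PR(D=d)\le c_0\log^L n$ for $\|d-d'\|_1\le L$, to establish $\PR(\text{near-critical})\le O(\mathrm{polylog})\cdot\PR(\best(u)\neq\true(u))$; you instead package the near-critical event as a small-ball event $\{|\Gamma(u,\true)|\le 2CL\}$ and invoke the Chernoff--Hellinger rate $n^{-(1+o(1))t/t_c}$ for both it and the genie error, closing the comparison by observing that the fixed $n^{-\ve}$ gain eventually dominates the discrepancy between the two $o(1)$ sequences in the exponent. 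Both are valid, and your version sidesteps the explicit ratio bound (though one still needs the same Poisson/Stirling computation underneath it to justify that shifting the threshold by $O(1)$ changes only subexponential prefactors, which is precisely what \eqref{prob-ratio} delivers). One small caution: your ``reorient $\hsig$ without loss of generality'' step is not literally free, since $\hX(-\hsig)\neq\pm\hX(\hsig)$ when $p_1\neq p_2$, so a global flip of the input does change the output estimator; the correct reading is that the theorem implicitly assumes the good estimator is already aligned with $\true$ (which is automatic for $\degree$ by its sign convention and immaterial for $\spec$ since $p_1=p_2$ there), rather than that the alignment can be assumed WLOG.
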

Given Theorem \ref{thm:two-step}, it suffices to show that $\spec$ and $\degree$ are good estimators in order to establish Theorems \ref{theorem:two-stage-spectral} and \ref{theorem:two-stage-degree}. In Section \ref{sec:good-estimator-guarantee}, we prove Theorem \ref{thm:two-step}, and in Section \ref{sec:good-estimators} we prove Theorems \ref{theorem:two-stage-spectral} and \ref{theorem:two-stage-degree}.

\begin{remark} \normalfont
In the case where $p_1=p_2$, the clean up step is essentially just a way of removing noise. $\spec$ is essentially just the primary eigenvector of $A$ with some noise removed, which means that $\hX(\spec)$ is a rounded version of $A\spec$, which can be viewed as the eigenvector with more noise removed. The result is an estimator which is similar to the eigenvector but with less susceptibility for its guess of a vertex's community to be influenced by atypicalities in its neighbors' degree profiles.

In the asymmetric case $\degree$ is a significantly worse estimator because it does not take into account any information on the communities of the vertices a target vertex's edges are to. Nevertheless, it classifies vertices with a sufficiently high accuracy for the two step approach to work. Namely, the initial estimate provides good estimates of the vertices' degree profiles, such that the clean-up step correctly selects the most likely community for the vertices, given their initially estimated degree profiles.\footnote{We could have used a spectral algorithm for the first step, but it would be harder to compute without giving any real advantage.}
\end{remark}

\subsection{Error rate guarantees for good estimators.}\label{sec:good-estimator-guarantee}
Suppose we are given a good estimator $\hsig$. 
Define $V_{\sss \mathrm{good}}:= \{i: N(i) \cap M(\hsig,\true) = \varnothing \}$. 
That is, $V_{\sss \mathrm{good}}$ is the set of vertices whose neighbors are all correctly classified by $\hsig$. We denote the rest as $V_{\sss \mathrm{bad}} = [n] \setminus V_{\sss \mathrm{good}}$. 
Note that if $i\in V_{\sss \mathrm{good}}$, then the two step estimator produces the same assignment for $i$ as the genie estimator. 
The next lemma handles the case that $i\in V_{\sss \mathrm{bad}}$ and the estimator $\hat{X}(\hat{\sigma},i)$ is incorrect.
 
\begin{lemma}\label{lem:spec-bad}
Let $\hsig$ be a good estimator. For any $i\in [n]$, we have
\begin{align*}
\PR(\hX (\hsig, i) \neq \true (i), i\in V_{\sss \mathrm{bad}}) =o\left(\PR(\hsig_{\sss \mathrm{Best}}(i)\neq \true (i))+1/n\right).
\end{align*}
\end{lemma}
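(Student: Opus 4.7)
The plan is to reduce the joint event $\{\hX(\hsig,i)\ne\true(i)\}\cap\{i\in V_{\sss\mathrm{bad}}\}$ to an event on $\Gamma(i,\true,p_1,p_2,q)$ whose probability is comparable to $\err(\best)$ via Proposition~\ref{prop:genie-expression-general} and Lemma~\ref{lem:genie}, and then to absorb the extra factor from $\{i\in V_{\sss\mathrm{bad}}\}$ using the two clauses of Definition~\ref{defn:good-estimator}. The key observation is that $\hX(\hsig,\cdot)$ and $\best(\cdot)$ both threshold the same linear statistic $\Gamma$, just evaluated with $\hsig$ and $\true$, respectively.

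First I would peel off atypical events. By Definition~\ref{defn:good-estimator}\ref{good-estimator-2} there is a fixed $L$ with $|N(i)\cap M(\hsig,\true)|\le L$ off an event of probability $o(1/n)$, and by \eqref{eq:neighbor-concentration} $\|D(\true,i)\|_1\le\log^{\nicefrac{5}{4}} n$ off an event of probability $\e^{-\log^{\nicefrac{5}{4}} n}$; call the intersection $\cE$. On $\cE$, each misclassified neighbor of $i$ shifts a single count between a Community~1 term and a Community~2 term in \eqref{eq:genie-lin-comb}, so $|\Gamma(i,\hsig,\cdot)-\Gamma(i,\true,\cdot)|\le LC_0$ for an explicit constant $C_0=C_0(p_1,p_2,q)$ built from $\log(p_1p_2/q^2)$ and $\log((1-p_1)(1-p_2)/(1-q)^2)$. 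Combined with Proposition~\ref{prop:genie-expression-general}, this forces $|\Gamma(i,\true,\cdot)|\le LC_0$ on the event $\cE\cap\{\hX(\hsig,i)\ne\true(i)\}$.

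Next I would condition on $D(\true,i)=d$. Definition~\ref{defn:good-estimator}\ref{good-estimator-3} gives $\PR(i\in V_{\sss\mathrm{bad}}\mid D(\true,i)=d)=O(n^{-\ve})$ uniformly over $\|d\|_1\le\log^{\nicefrac{5}{4}} n$, so
\[
\PR\bigl(\hX(\hsig,i)\ne\true(i),\,i\in V_{\sss\mathrm{bad}},\,\cE\bigr)\;\le\; O(n^{-\ve})\cdot\PR\bigl(|\Gamma(i,\true,\cdot)|\le LC_0\bigr).
\]
The final step reuses the computation in the proof of Lemma~\ref{lem:genie}: Lemma~\ref{fact:stirling} combined with \cite[Theorem~3]{AS15} gives $\PR(\best(i)\ne\true(i))=n^{-(1+o(1))t/t_c(p_1,p_2,q)}$, and the identical Chernoff-type bound applied to the shifted target $\{|\Gamma|\le LC_0\}$ yields the same exponential rate, since an additive constant perturbation of the decision threshold changes the normalized rate function only by $O(1/\log n)$. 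Hence $\PR(|\Gamma(i,\true,\cdot)|\le LC_0)=O(\err(\best)+1/n)$, and combining with the $o(1/n)$ contribution from $\cE^c$ gives the claim.

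The main obstacle is justifying the threshold-shift step. The reason it works is that $\Gamma(d)$ is exactly the log-likelihood ratio between the hypotheses $\true(i)=\pm1$, so at any $d$ with $|\Gamma(d)|\le LC_0$ the two conditional likelihoods of $D=d$ differ by at most a factor $\e^{LC_0}$; summing over such $d$ therefore yields a quantity comparable to $\sum_d\min\{\PR(D=d\mid+1),\PR(D=d\mid-1)\}=\err(\best)$. A secondary bookkeeping point, relevant only when $p_1\ne p_2$, is that $M(\hsig,\true)$ quotients out the global sign flip while $\hX$ does not; this will be handled by applying the bound with $\hsig$ in its optimal orientation relative to $\true$, which is the implicit convention when the lemma is later applied to $\spec$ and $\degree$.
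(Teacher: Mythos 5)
Your overall route---working directly with the log-likelihood ratio $\Gamma$ rather than with degree-profile counts---is a genuine alternative to the paper's argument. The paper sums over degree profiles $d$ for which some nearby profile $d'$ with $\|d-d'\|_1\le L$ makes the genie err, and uses the Poisson approximation from Lemma~\ref{fact:stirling} to bound $\PR(D(\true,i)=d)/\PR(D(\true,i)=d')$ by $c_0\log^{2L}n$, a polylogarithmic loss later absorbed by the $n^{-\ve}$ from Definition~\ref{defn:good-estimator}\ref{good-estimator-3}. Your observation that $\Gamma$ is exactly the log-likelihood ratio (Proposition~\ref{prop:genie-expression-general}), so that for every profile $d$ with $|\Gamma(d)|\le LC_0$ the two conditional likelihoods agree up to a factor $\e^{LC_0}$ and hence $\PR(|\Gamma(i,\true,\cdot)|\le LC_0)\le(1+\e^{LC_0})\PR(\best(i)\ne\true(i))$, replaces the polylogarithmic factor by a constant and is arguably cleaner. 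Both proofs then condition on $D(\true,i)=d$ over $\|d\|_1\le\log^{\nicefrac{5}{4}}n$ to inject the $n^{-\ve}$; the skeleton is the same, the bookkeeping different.

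One step does not hold as written. You assert that on $\mathcal{E}\cap\{\hX(\hsig,i)\ne\true(i)\}$ one has $|\Gamma(i,\true,\cdot)|\le LC_0$. This fails when $\best(i)\ne\true(i)$: if, say, $\true(i)=+1$ and $\Gamma(i,\true,\cdot)\ll -LC_0$, then $\Gamma(i,\hsig,\cdot)<0$ as well, so $\hX$ errs, yet $|\Gamma(i,\true,\cdot)|$ is arbitrarily large. The correct inclusion is
\[
\mathcal{E}\cap\{\hX(\hsig,i)\ne\true(i)\}\;\subseteq\;\{\best(i)\ne\true(i)\}\,\cup\,\{|\Gamma(i,\true,\cdot)|\le LC_0\},
\]
and the first piece must be handled separately. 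It is benign: conditioning on $D(\true,i)=d$ with $\|d\|_1\le\log^{\nicefrac{5}{4}}n$ and applying Definition~\ref{defn:good-estimator}\ref{good-estimator-3}, exactly as you do for the small-$|\Gamma|$ piece, gives $\PR(\best(i)\ne\true(i),\,i\in V_{\sss\mathrm{bad}},\,D(\true,i)\in\cD)=O(n^{-\ve})\PR(\best(i)\ne\true(i))$, which combines with your estimate to give the claimed $o(\PR(\best(i)\ne\true(i))+1/n)$. With that correction the argument is complete; the orientation caveat you flag is real but is also left implicit in the paper, which tacitly takes $\hsig$ in its optimal global orientation.
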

\begin{proof}
Fix a vertex $i$, and recall the degree profile notation $D(\sigma,i) = (D_k( \sigma,i))_{k=1}^4$. Define $\cD:= \{d \in \mathbb{Z}_+^4: 
\|d\|_1 \leq \log^{\nicefrac{5}{4}}n\}$. 
By \eqref{eq:neighbor-concentration}, 
$\PR(D(\true,i) \notin \cD) = o(1/n)$.
First, we claim that for any $d,d'\in \cD$ satisfying $\|d-d'\|_1 \leq L$, 
\begin{eq}\label{prob-ratio}
\frac{\mathbb{P}\left(D(\true,i) = d'\right)}{\mathbb{P}\left(D(\true,i) = d\right)} \leq c_0 \log^{2L} n, 
\end{eq}
where $c_0>0$ is a constant that depends only on $p_1$, $p_2$, $q$, $t$, and $L$. Indeed, by Lemma~\ref{fact:stirling},  
$$(1+o(1))\min \{A(d),B(d)\}\leq\PR(D(\true,i) = d) \leq (1+o(1)) \max\{A(d),B(d)\}$$
where 
\begin{align*}
    A(d)&:=  P\Big(\frac{t p_1 \log n }{2}; d_1\Big)P\Big(\frac{t (1-p_1) \log n }{2}; d_2\Big)P\Big(\frac{t q \log n }{2}; d_3\Big)P\Big(\frac{t (1-q) \log n }{2}; d_4\Big)\\
    B(d)&:=  P\Big(\frac{t q \log n }{2}; d_1\Big)P\Big(\frac{t (1-q) \log n }{2}; d_2\Big)P\Big(\frac{t p_2 \log n }{2}; d_3\Big)P\Big(\frac{t (1-p_2) \log n }{2}; d_4\Big). 
\end{align*}
Using $\|d-d'\|_1 \leq L$, \eqref{prob-ratio} now follows.
Next, we set up some notation to complete the proof. Define $\cS_0(i) \subset \cS$ to be the set of $\sigma$ such that $|N(i) \cap M(\sigma,\true)|\leq L$. By Definition~\ref{defn:good-estimator}~\ref{good-estimator-2}, we have that $\PR(\hsig \notin \cS_0(i)) = o(1/n)$.  
For every $d$, let \[\overline{\sigma}(d,i) := \argmax_{r\in \{\pm 1\}}\PR(\true(i) = r \mid D(\true,i) = d)\] 
be the most likely community assignment of $i$ given the observed degree profile $d$ of~$i$. We set $\overline{\sigma}(d,i) = +1$ if they are equally likely. 
Define $\cB (d,L):=\{d'\in \cD: \|d-d'\|_1 \leq L \text{ and } \overline{\sigma}(d',i) \neq \true(i)\}$. 
In other words, $\cB (d,L)$ is the set of degree profiles near $d$ on which even the best estimator makes a mistake.
Note that if we have $\hX (\hsig, i) \neq \true (i)$, $i\in V_{\sss \mathrm{bad}}$,  and $\hsig \in \cS_0(i)$, then it must be the case that $D(\true,i)$ has a degree profile $d$ such that $\cB (d,L) \neq \varnothing$. Therefore, using Definition~\ref{defn:good-estimator}~\ref{good-estimator-3} and \eqref{prob-ratio}, 
\begin{align*}
&\PR(\hX (\hsig, i) \neq \true (i), i\in V_{\sss \mathrm{bad}})\\
& \leq  \PR(\hX (\hsig, i) \neq \true (i), i\in V_{\sss \mathrm{bad}}, D(\true,i) \in \cD, \hsig\in \cS_0(i)) +o(1/n) \\
&\le \sum_{d \in \cD:\cB (d,L) \neq \varnothing } \PR\left(i\in V_{\sss \mathrm{bad}},D(\sigma_0,i)=d\right)+o(1/n)\\
&\le\sum_{d \in \cD:\cB (d,L) \neq \varnothing } cn^{-\ve}\PR\left(D(\sigma_0,i)=d\right) +o(1/n)\\
&= cn^{-\ve}\PR(D(\sigma_0,i) \in \cD \text{ and }\cB (D(\sigma_0,i),L)\neq \varnothing ) +o(1/n)\\
&\le cn^{-\ve}\sum_{d': \overline{\sigma}(d',i)\ne \true(i)} \PR(\|D(\sigma_0,i)- d'\|_1\le L) +o(1/n) 
\\
&\le cn^{-\ve}\sum_{d': \overline{\sigma}(d',i)\ne \true(i)} c_0\ln^{2L}(n)\PR(D(\sigma_0,i)=d') +o(1/n)\\
&=  cc_0n^{-\ve} \ln^{2L}(n)\times \PR(\hsig_{\sss \mathrm{Best}}(i)\neq \true (i)) +o(1/n)\\
&=o\left(\PR(\hsig_{\sss \mathrm{Best}}(i)\neq \true (i))+1/n\right) 
\end{align*}
\end{proof}

\begin{proof}\emph{(Proof of Theorem~\ref{thm:two-step}).} Fix $i\in [n]$. 
Recall that $V_{\sss \mathrm{good}}:= \{i: N(i) \cap M(\hsig,\true) = \varnothing \}$, and  $V_{\sss \mathrm{bad}} = [n] \setminus V_{\sss \mathrm{good}}$. 
Using Lemma~\ref{lem:spec-bad}, we have
\begin{align*}
\PR(\hX (\hsig, i) \neq \true (i) ) = (1+o(1)) \PR(\best(i) \neq \true(i) ) + o(1/n). 
\end{align*}
Taking an average over $i$ and using $\err(\hX (\hsig) ) \leq \frac{1}{n}\sum_{i\in [n]}\PR(\hX (\hsig, i) \neq \true (i) ) $ and $\err(\best) =  \frac{1}{n}\sum_{i\in [n]}\PR(\best(i) \neq \true (i) ) + o(1/n) $ (see the argument after \eqref{error-best-u}),  the proof follows. 
\end{proof}

\subsection{Good estimators}\label{sec:good-estimators}
\begin{proof}\emph{(Proof of Theorem~\ref{theorem:two-stage-spectral}).}
Due to Theorem \ref{thm:two-step}, it suffices to show that $\spec$ is a good estimator.
We will first verify Definition~\ref{defn:good-estimator}~\ref{good-estimator-2}. Let $L\geq 1$ be fixed (to be chosen later). 
For any $V\subset [n]$, let 
$$D_V(j):= \sum_{k\in V^c: k\sim j}A_{jk} - \sum_{k\in V^c: k\not\sim j} A_{jk}. $$
By the previous analysis (cf.~\eqref{misclassify-spec}), 
\begin{eq}
\PR(\spec(j)  \text{ misclassifies}) \leq \PR(D_{\varnothing}(j) \leq \varepsilon \log n) + o(1/n),
\end{eq}
 where $\varepsilon  = o(1)$ but $\varepsilon \log n \to \infty$.
Additionally, if $|V| \leq L$ for some fixed $L\geq 0$, then $D_V(j) \leq \varepsilon \log n+ O(L) \leq  2 \varepsilon \log n$ for large enough $n$. Thus,
\begin{align*}
&\PR(|N(i) \cap M(\spec,\true)| > L)\\
&\leq \PR(|N(i)| > \log ^{\nicefrac{5}{4}}n) + \E\big[\PR\big(|N(i) \cap  M(\spec,\true) | > L \mid N(i), |N(i)| \leq  \log ^{\nicefrac{5}{4}}n\big)\big]\\
&\leq \e^{-\log^{\nicefrac{5}{4}} n} +  \E\bigg[\sum_{j_0,\dots,j_L\in N(i)}\PR\big(j_0,\dots, j_{L} \in N(i) \cap  M(\spec,\true) \mid N(i), |N(i)| \leq  \log^{\nicefrac{5}{4}} n\big)\bigg]\\
&\leq \e^{-\log^{\nicefrac{5}{4}} n} + \binom{\log^{\nicefrac{5}{4}} n}{L+1}  \E\Big[\max_{j_0,\dots,j_L}\PR\big(j_0,\dots, j_{L} \in N(i) \cap  M(\spec,\true) \mid N(i), |N(i)| \leq  \log^{\nicefrac{5}{4}} n\big)\Big],
\end{align*}
where we have applied \eqref{eq:neighbor-concentration}. Taking $V = \{j_0,\dots,j_L,i\}$ yields
\begin{align*}
&\PR\big(j_0,\dots, j_{L} \in N(i) \cap M(\spec,\true) \mid N(i), |N(i)| \leq  \log ^{\nicefrac{5}{4}} n\big)\\
&\leq \PR\big(D_V(j_l) \leq 2 \varepsilon \log n, \ \forall 0\leq l\leq L \mid N(i), |N(i)| \leq  \log ^{\nicefrac{5}{4}} n\big). 
\end{align*}
Now, note that $D_V(j_l)$ for different $l$ depend on disjoint sets of random variables. Thus, Lemma~\ref{lemma:supporting-3} gives
\begin{align*}
\PR\big(D_V(j_l) \leq 2 \varepsilon \log n, \ \forall 0\leq l\leq L \mid N(i), |N(i)| \leq  \log ^{\nicefrac{5}{4}} n\big) \leq n^{-c(L+1)},
\end{align*}
for some constant $c>0$. 
Therefore, 
\begin{align*}
\PR(|N(i) \cap M(\spec,\true)| > L)\leq \e^{-\log^{\nicefrac{5}{4}}n} + (\log n)^{\nicefrac{5(L+1)}{4}} n^{-c(L+1)} = o(1/n),
\end{align*}
which verifies Definition~\ref{defn:good-estimator}~\ref{good-estimator-2} by taking $L$ to be a large fixed constant. To verify Definition~\ref{defn:good-estimator}~\ref{good-estimator-3}, we can repeat the above argument for conditional probabilities with $L=0$. 
Indeed, for any $d$ with $\|d\|_1\leq \log^{\nicefrac{5}{4}}n$, 
\begin{align*}
\PR(|N(i) \cap M| \geq 1 \mid D(\true,i) = d) \leq(\log n)^{\nicefrac{5}{4}}\cdot  n^{-c}, 
\end{align*}
and the proof follows. 
\end{proof}

\begin{proof}\emph{(Proof of Theorem~\ref{theorem:two-stage-degree}).} 
Due to Theorem \ref{thm:two-step}, it suffices to show that $\degree$ is a good estimator. We have
\begin{align}\label{degree-expts}
\mathbb{E}[\deg(j) \mid \true]
 &= \begin{cases}
\frac{t \log(n)}{2}\left(p_1 + q \right) & \quad \text{if }\sigma_0(j) = +1,\\
\frac{t \log(n)}{2}\left(p_2 + q \right) &  \quad \text{if } \sigma_0(j) = -1.
\end{cases}
\end{align}
Suppose $p_1 > p_2$ without loss of generality. We bound 
\begin{align*}
&\mathbb{P}\Big(\deg(j) < \frac{t \log(n)}{4}\left(p_1 + p_2 + 2q \right)  \ \Big\vert\ \true(j) = +1\Big) \\
&= \mathbb{P}\Big(\deg(j) < \Big(1 - \frac{p_1 - p_2}{2(p_1 + q)} \Big) \mathbb{E}[\deg(j) \mid \sigma_0(j) = +1] \ \Big\vert\ \true(j) = +1 \Big)\\
&\leq \exp\bigg(-\frac{\mathbb{E}[\deg(j) \mid \sigma_0(j) = +1]}{2} \Big(\frac{p_1 - p_2}{2(p_1 + q)}\Big)^2\bigg)\\
&= \exp\bigg(-\frac{t \log(n)(p_1 + q)}{4} \Big(\frac{p_1 - p_2}{2(p_1 + q)}\Big)^2\bigg)\\
&=\exp\bigg(-\frac{t \log(n)(p_1 - p_2)^2}{16 (p_1 + q)} \bigg).
\end{align*}
Similarly,
\begin{align*}
&\mathbb{P}\Big(\deg(j) \geq \frac{t \log(n)}{4}(p_1 + p_2 + 2q ) \ \Big\vert\ \true(j) = -1 \Big) \\
&= \mathbb{P}\Big(\deg(j) \geq \left(1 +  \frac{p_1 - p_2}{2(p_2 + q)}\right) \mathbb{E}[\deg(j) \mid \sigma_0(j)=-1]  \ \Big\vert\ \true(j) = -1\Big)\\
&\leq \exp \bigg(- \frac{\left(\frac{p_1 - p_2}{2(p_2 + q)}\right)^2}{2 + \frac{p_1 - p_2}{2(p_2 + q)}} \mathbb{E}[\deg(j) \mid \sigma_0(j) = -1] \bigg)\\
&= \exp \bigg(- \frac{t \log n (p_2 + q)}{2} \frac{ \big(\frac{p_1 - p_2}{2(p_2 + q)}\big)^2}{2 + \frac{p_1 - p_2}{2(p_2 + q)}}\bigg)\\
&=\exp \left(- \frac{t \log n (p_1 - p_2)^2 }{4(p_1 + 3p_2 + 4q)}\right).
\end{align*}
Therefore, we have shown that any given vertex $j$ is misclassified with probability at most $n^{-\beta}$, where $\beta > 0$ is a constant. Fix $i \in [n]$. We wish to take a union bound over $j\in N(i)$ in order to bound the probability that some revealed neighbor of $i$ is misclassified. However, if a vertex $j$ is a neighbor of $i$, then its degree is inflated by $1$. Since the discrepancy is negligible at the $\log(n)$ scale, we may take $\beta$ slightly smaller. Then, for any $d$ such that $\|d\|_1\leq \log^{\nicefrac{5}{4}}n$, 
\[\mathbb{P}\left(N(i) \cap M(\degree, \true) \neq \varnothing ~\big|~ D(\true,i) = d \right) \leq 
\log^{\nicefrac{5}{4}}(n) n^{-\beta} \leq n^{-\frac{\beta}{2}}\]
for $n$ large enough, satisfying Definition~\ref{defn:good-estimator}~\ref{good-estimator-3}. 

Next, take $L > \frac{2}{\beta}$. By a Union Bound,
\begin{align*}
\mathbb{P}\left(|N(i) \cap M(\degree, \true)| \geq L \right) &\leq \binom{\log^{\nicefrac{5}{4}}(n)}{L} n^{-\beta L} + n^{-\omega(1)}\\
&\leq \log^{\nicefrac{5L}{4}}(n) n^{-\beta L}  + n^{-\omega(1)} \leq  n^{-\frac{3}{2}},
\end{align*}
for $n$ large enough, which verifies Definition~\ref{defn:good-estimator}~\ref{good-estimator-2}.
\end{proof}

\section{Failure of the spectral algorithm}\label{sec:failure}
Similarly to the proof of Theorem \ref{theorem:exact-recovery-spectral}, we apply the entrywise approach to analyze the eigenvectors of $A$. Any linear combination of $u_1$ and $u_2$ can then be approximated by $A$ times a vector $z$ that is constant on each community. Then we can characterize all spectral algorithms as thresholding weighted degree profiles by a \emph{score} that depends on the vector $z$. Sufficiently close to the threshold, we exhibit certain degree profiles that are likely enough to occur, for which there is no way to pick the encoding value $y$ or the weights $z$ in order to separate the communities. Specifically, if we take the degree profile that is half way between the expected degree profiles for vertices in the two communities, there will always be a way to perturb it such that vertices with that degree profile are likely to exist in one community and the spectral algorithm classifies them as being in the other. Unlike in the symmetric case, the class of spectral algorithms considered here does not have enough degrees of freedom to allow correct separation of vertices with such degree profiles.

\begin{proof}\emph{(Proof of Theorem~\ref{theorem:antisymmetric-spectral-failure}).}
Throughout the proof, we will condition on $\sigma_0$ satisfying $\left| n_1 (\true)- \frac{n}{2}\right| \leq n^{\frac{2}{3}}$ and $\left| n_2 (\true) - \frac{n}{2}\right| \leq n^{\frac{2}{3}}$.

Let $u_1$ and $u_2$ be the top two eigenvectors of $A$. By Proposition~\ref{lemma:corollary-3.1-abbe} we have that with probability $1-O(n^{-3})$,
\[\left\Vert u_1 - s_1 \frac{A u_1^\star}{\lambda_1^\star} \right\Vert_{\infty} \leq \frac{C}{\sqrt{n} \log \log n}~~~\text{and}~~~\left \Vert u_2 - s_2 \frac{A u_2^\star}{\lambda_2^\star} \right \Vert_{\infty} \leq \frac{C}{\sqrt{n} \log \log n},\]
for some $s_1, s_2 \in \{-1,1\}$ and some constant $C>0$. Then for any $c_1, c_2$ we also have
\[\left \Vert c_1u_1+c_2u_2 - A\left(\frac{s_1 c_1}{\lambda_1^\star} u_1^\star+\frac{s_2 c_2}{\lambda_2^\star} u_2^\star\right) \right \Vert_{\infty} \leq \frac{C(|c_1| + |c_2|)}{\sqrt{n} \log \log n}.\]
By Lemma~\ref{lem:evals-Astar}, the vector $(\frac{s_1c_1}{\lambda_1^\star} u_1^\star+\frac{s_2c_2}{\lambda_2^\star} u_2^\star)$ will assign all vertices in Community 1 some value $z_1$ and all vertices in Community 2 some other value $z_2$. We can assume without loss of generality that $z_1^2+z_2^2=1$. We conclude that a spectral algorithm based on a linear combination of the top two eigenvectors will classify a vertex with degree profile $(d_1,d_2,d_3,d_4)$ as being in one community if \[z_1 d_1-yz_1d_2+z_2d_3-yz_2 d_4>r+\Omega\left(\log(n)/\log(\log(n))\right)\] and the opposite community if \[z_1 d_1-yz_1d_2+z_2d_3-yz_2 d_4<r-\Omega\left(\log(n)/\log(\log(n))\right)\] for some threshold $r$. To show that the spectral algorithm fails for $t$ sufficiently close to the threshold, it suffices to show that there is no choice of $z_1$, $z_2$ and $y$ for which thresholding the score $z_1 d_1-yz_1d_2+z_2d_3-yz_2 d_4$ separates the communities successfully.

We will first show that
\begin{equation}
r=(z_1-yz_2)\sqrt{p/8}t\ln(n)+(z_2-yz_1)\sqrt{(1-p)/8}t\ln(n)+o(\log(n)) \label{eq:claimed-threshold} 
\end{equation}
and both $y$ and $z_1z_2$ are positive. We can then assume without loss of generality that $z_1,z_2>0$ and the algorithm classifies a vertex as being in Community 1 if its score is larger than $r+\Omega(\log(n)/\log(\log(n)))$, and otherwise classifies it as being in Community 2.
We will then show that there exist constants $\delta>\delta'>0$ such that there are vertices with degree profile
\begin{equation}
\left(\sqrt{p/8}t\ln(n)-\delta\ln(n),\sqrt{(1-p)/8}t\ln(n)-\delta'\ln(n),\sqrt{(1-p)/8}t\ln(n),\sqrt{p/8}t\ln(n)\right) \label{eq:profile-1}
\end{equation}
in Community 1 and vertices with degree profile 
\begin{equation}
\left(\sqrt{p/8}t\ln(n),\sqrt{(1-p)/8}t\ln(n),\sqrt{(1-p)/8}t\ln(n)-\delta'\ln(n),\sqrt{p/8}t\ln(n)-\delta\ln(n)\right) \label{eq:profile-2}
\end{equation}
in Community 2. In order to classify the former vertices correctly, the spectral algorithm would need to have $y>\delta/\delta'-o(1)$ and to classify the later vertices correctly the algorithm would need to have $y<\delta'/\delta+o(1)$, but for large $n$ those cannot both be true.

It remains to establish the claims regarding the threshold value, and the existence of common degree profiles. To this end, we compute the probability of a given degree profile for $\true(i) = +1$ and $\true(i) = -1$.
Given constants $c_1$, $c_2$, $c_3$, and $c_4$, set $c'_1=c_1-t\sqrt{p/8}$, $c'_2=c_2-t\sqrt{(1-p)/8}$, $c'_3=c_3-t\sqrt{(1-p)/8}$, $c'_4=c_4-t\sqrt{p/8}$, and $\epsilon=\min\{\sqrt{p/8}t,\sqrt{(1-p)/8}t,c_1,c_2,c_3,c_4\}$. Computing the probability of given profiles using Lemma \ref{fact:stirling}, 
\begin{eq}
&\mathbb{P}\left(D(\sigma_0,i) = \log(n)\left(c_1, c_2, c_3, c_4 \right) \right)  = \mathbb{P}\left(D(\sigma_0,j) = \log(n)\left(c_4, c_3, c_2, c_1 \right) \right)  \\
& \asymp P\left(\frac{tp \log n}{2}; c_1 \log n \right) P\left(\frac{t(1-p) \log n}{2}; c_2 \log n \right) P\left(\frac{t \log n}{4}; c_3 \log n \right)P\left(\frac{t \log n}{4}; c_4 \log n \right) \\
& =\frac{n^{-t} (\frac{t\ln(n)}{2})^{(c_1+c_2+c_3+c_4)\ln(n)}}{(c_1\ln(n))!(c_2\ln(n))!(c_3\ln(n))!(c_4\ln(n))!} \times  p^{c_1\ln(n)}(1-p)^{c_2\ln(n)}(1/2)^{(c_3+c_4)\ln(n)}\\
&\approx \frac{n^{-t} (\frac{\e t}{2})^{(c_1+c_2+c_3+c_4)\ln(n)}}{c_1^{c_1\ln(n)}c_2^{c_2\ln(n)}c_3^{c_3\ln(n)}c_4^{c_4\ln(n)}} \times p^{c_1\ln(n)}(1-p)^{c_2\ln(n)}(1/2)^{(c_3+c_4)\ln(n)} \\
&= \frac{n^{-t} \big[(\frac{\e^2t^2p}{8})^{(c_1+c_4)}(\frac{\e^2t^2(1-p)}{8})^{(c_2+c_3)}(2p)^{(c_1-c_4)}\left(2(1-p)\right)^{(c_2-c_3)}\big]^{\ln(n)/2} }{n^{c_1\ln(c_1)}n^{c_2\ln(c_2)}n^{c_3\ln(c_3)}n^{c_4\ln(c_4)}}. \label{eq:degree-profile-probability}
\end{eq}
\normalsize
To bound \eqref{eq:degree-profile-probability}, we will use the fact that for any $a,b > 0$,
\begin{equation}
a\ln(a)\le b\ln(b)+(a-b)\ln(\e  b)+\frac{(a-b)^2}{2\min(a,b)}, \label{eq:log-fact} 
\end{equation}
which follows from Taylor expansion and the fact that the second derivative of $x\ln(x)$ is at most $1/\min(a,b)$ between $a$ and $b$.
Taking $a = c_1$ and $b = c_1 - c_1' = t\sqrt{\nicefrac{p}{8}}$, in \eqref{eq:log-fact}, we have
\[c_1 \log(c_1) \leq t\sqrt{\frac{p}{8}} \log \left(t\sqrt{\frac{p}{8}} \right) + c_1'\log\left(\e t\sqrt{\frac{p}{8}}\right) + \frac{c_1'^2}{2\ve }  \]
and therefore
\begin{align*}
&\frac{1}{n^{c_1 \log(c_1)}} \bigg(\e t \sqrt{\frac{p}{8}} \bigg)^{c_1 \log(n)} \\
&\geq  \exp\left[-\log(n)\left(t\sqrt{\frac{p}{8}} \log \left(t\sqrt{\frac{p}{8}} \right) + c_1'\log\left(\e t\sqrt{\frac{p}{8}}\right)  - c_1 \log\left( \e t \sqrt{\frac{p}{8}}\right) \right) \right]n^{-\frac{c_1'^2}{2\ve}} \\
&=  \exp\left[-\log(n)\sqrt{\frac{p}{8}} \left(t \sqrt{\frac{p}{8}} + c_1' - c_1 \right) + t\log(n) \sqrt{\frac{p}{8}}  \right]n^{-\frac{c_1'^2}{2\ve}} \\
&=e^{t\sqrt{\frac{p}{8}} \log(n)} n^{-\frac{c_1'^2}{2\ve}}.
\end{align*}
Similarly,
\begin{align*}
\frac{1}{n^{c_2 \log(c_2)}} \bigg(\e t \sqrt{\frac{1-p}{8}} 
\bigg)^{c_2 \log(n)} 
&\geq \e^{t\sqrt{\frac{1-p}{8}}\log(n)}n^{-\frac{c_2^2}{2\ve}}\\
\frac{1}{n^{c_3 \log(c_3)}} \bigg(\e t \sqrt{\frac{1-p}{8}} \bigg)^{c_3 \log(n)} &\geq \e^{t\sqrt{\frac{1-p}{8}}\log(n)}n^{-\frac{c_3^2}{2\ve}}\\
\frac{1}{n^{c_4 \log(c_4)}} \bigg(\e t \sqrt{\frac{p}{8}} \bigg)^{c_2 \log(n)} &\geq \e^{t\sqrt{\frac{p}{8}}\log(n)}n^{-\frac{c_4^2}{2\ve}}.
\end{align*}
Next, for simplicity, let us denote $t_0 = t_c(p,1-p,\nicefrac{1}{2})$, and  we use the fact that the minimum is attained in \eqref{CH-distance} for $x = 1/2$ when $p_1 = 1-p_2$ and $q = \nicefrac{1}{2}$.
Substituting the above inequalities into \eqref{eq:degree-profile-probability}, and denoting $$Z_n = n^{-((c'_1)^2+(c'_2)^2+(c'_3)^2+(c'_4)^2)/2\ve} \times  (2p)^{(c'_1-c'_4)\ln(n)/2}\left(2(1-p)\right)^{(c'_2-c'_3)\ln(n)/2},$$ we obtain
\begin{eq}\label{eq:degree-profile-lower-bound}
&\mathbb{P}\left(D(\sigma_0,i) = \log(n)\left(c_1, c_2, c_3, c_4 \right) \right) \gtrapprox n^{-t} \e^{t\sqrt{p/2} \ln(n)}\e^{t\sqrt{(1-p)/2} \ln(n)} \times Z_n = n^{-t/t_0} \times Z_n,
\end{eq}
where $a_n \gtrapprox b_n$ means there is some constant $c$ such that $a_n =\Omega(b_n\log^c(n))$.
When $t = t_0$, observe that
$\mathbb{P}\left(D(\sigma_0,i) = \log(n) (c_1, c_2, c_3, c_4)\right)$ cannot be less than $Z_n/n$ by more than a polylogarithmic factor. Let 
\begin{align*}
c_1 = \sqrt{\frac{p}{8}} t_0 + \delta, \quad 
c_2 = c_3= \sqrt{\frac{(1-p)}{8}} t_0, \quad
c_4 =  \sqrt{\frac{p}{8}} t_0.
\end{align*}
If $\delta > 0$ is sufficiently small, with high probability there will be vertices with degree profiles of
\[\log(n)(\sqrt{p/8}t_0+\delta,\sqrt{(1-p)/8}t_0,\sqrt{(1-p)/8}t_0,\sqrt{p/8}t_0)\]
in Community 1, since \eqref{eq:degree-profile-lower-bound} is $\omega\left(\frac{1}{n}\right)$. In fact, the number of such vertices will be $\Omega\left(n^c\right)$ with high probability for some $c > 0$. A similar argument shows that for $\delta > 0$ small enough, there will additionally be vertices  with degree profiles of
\begin{align*}
&\log(n)(\sqrt{p/8}t_0,\sqrt{(1-p)/8}t_0-\delta,\sqrt{(1-p)/8}t_0,\sqrt{p/8}t_0)\\
&\log(n)(\sqrt{p/8}t_0,\sqrt{(1-p)/8}t_0,\sqrt{(1-p)/8}t_0+\delta,\sqrt{p/8}t_0)\\
&\log(n)(\sqrt{p/8}t_0,\sqrt{(1-p)/8}t_0,\sqrt{(1-p)/8}t_0,\sqrt{p/8}t_0-\delta)
\end{align*} in Community 1 and vertices with degree profiles of \begin{align*}
&\log(n)(\sqrt{p/8}t_0-\delta,\sqrt{(1-p)/8}t_0,\sqrt{(1-p)/8}t_0,\sqrt{p/8}t_0)\\
&\log(n)(\sqrt{p/8}t_0,\sqrt{(1-p)/8}t_0+\delta,\sqrt{(1-p)/8}t_0,\sqrt{p/8}t_0)\\
&\log(n)(\sqrt{p/8}t_0,\sqrt{(1-p)/8}t_0,\sqrt{(1-p)/8}t_0-\delta,\sqrt{p/8}t_0)\\
&\log(n)(\sqrt{p/8}t_0,\sqrt{(1-p)/8}t_0,\sqrt{(1-p)/8}t_0,\sqrt{p/8}t_0+\delta)
\end{align*}
in Community 2, when $t = t_0$. If $t = t_0 + \eta$ for $\eta > 0$ sufficiently small, these degree profiles will still all exist with high probability. To classify the vertices with the above degree profiles correctly, we must use the decision rule given by \eqref{eq:claimed-threshold}, and both $y$ and $z_1 z_2$ must be positive. Finally, since $4p(1-p) < 1$, there exist $\delta > \delta' > 0$ such that there are vertices with the required degree profiles \eqref{eq:profile-1} and \eqref{eq:profile-2}, and thus the proof follows.
\end{proof}

\bibliographystyle{abbrv}
\bibliography{main}

\appendix

\section{Supporting results for spectral algorithm analysis} \label{sec:appendix-supporting}
In this section, we complete the proofs of Lemmas~\ref{lemma:supporting-1},~\ref{lemma:supporting-2}, and~\ref{lemma:supporting-3}. 

\begin{proof}\emph{(Proof of Lemma \ref{lemma:supporting-1}).}
We use ideas from the proof of Theorem 9 in \cite{Hajek2016}. Throughout the proof, we condition on the realization of $\sigma_0$, without writing so explicitly. We first bound $\mathbb{E}\left[\Vert A - A^{\star}\Vert_2\right]$. Let $A'$ be an independent copy of $A$. By Jensen's inequality,
\[\mathbb{E}\left[\Vert A - A^{\star}\Vert_2 \right]=\mathbb{E}\left[ \Vert A - \mathbb{E}[A'] \Vert_2 \right] \leq \mathbb{E}\left[\Vert A - A'\Vert_2\right]. \]
Let $R$ be a matrix of iid Rademacher random variables, and let $\circ$ denote the elementwise product. Then
\[\mathbb{E}\left[\Vert A - A'\Vert_2\right] = \mathbb{E}\left[\Vert (A - A') \circ R\Vert_2\right] \leq 2 \mathbb{E}\left[\Vert A \circ R \Vert_2 \right].\]
Let $D$ be a matrix with the same distribution as $A\circ R$, so that $\mathbb{E}\left[\Vert A \circ R \Vert_2 \right] = \mathbb{E}\left[\Vert D \Vert_2 \right]$. Note that the entries of $D$ are independent, and the distribution of $D_{ij}$ depends on whether $\sigma_0(i) = \sigma_0(j)$. Let $B$ be the matrix where $B_{ij} = 1 - y$ if $D_{ij} = y$ and $B_{ij} = y - 1$ if $D_{ij} = -y$, and is equal to zero otherwise. By definition, the matrix $D+B$ has entries in $\{-1, 0,1\}$. For $i \neq j$, we have \[\mathbb{P}\left((D+B)_{ij} = 1 \right) = \mathbb{P}\left((D+B)_{ij} = - 1 \right) = \frac{1}{2} \alpha, ~~~ \mathbb{P}\left((D+B)_{ij} = 0 \right) = 1 -\alpha,\]
regardless of the communities of $i$ and $j$. The entries of $D+B$ are independent. We decompose $B$ into $B = B^{(p)} + B^{(q)}$, as follows. Set $B^{(p)}_{ij} = 0$ whenever $\sigma_0(i) \neq \sigma_0(j)$ or $i = j$. Otherwise,
\begin{align*}
B^{(p)}_{ij} = \begin{cases}1 - y & \text{with probability } \frac{1}{2}\alpha (1-p)\\
 y - 1 & \text{with probability } \frac{1}{2}\alpha (1-p)\\
 0 &\text{otherwise.}
\end{cases}
\end{align*}
Similarly, set $B^{(q)}_{ij} = 0$ whenever $\sigma_0(i) = \sigma_0(j)$. Otherwise,
\begin{align*}
B^{(q)}_{ij} = \begin{cases} 1 - y & \text{with probability } \frac{1}{2}\alpha (1-q)\\
y - 1 & \text{with probability } \frac{1}{2}\alpha (1-q)\\
0 &\text{otherwise.}
\end{cases}
\end{align*}
Note that $\mathbb{E}\left[B^{(p)}_{ij} \right] = \mathbb{E}\left[B^{(q)}_{ij} \right] = 0.$ 
Continuing,
\begin{align*}
\mathbb{E}\left[\Vert A \circ R \Vert_2 \right] &= \mathbb{E}\left[\Vert D \Vert_2 \right]\\
&=  \mathbb{E}\left[\left\Vert D + B - B^{(p)} - B^{(q)} \right\Vert_2 \right]\\
&\leq \mathbb{E}\left[ \Vert D + B \Vert_2 \right] + \mathbb{E}\left[\left\Vert  B^{(p)}  \right\Vert_2 \right] + \mathbb{E}\left[\left\Vert  B^{(q)}  \right\Vert_2 \right].
\end{align*}
Let $\overline{B}^{(p)}$ and  $\overline{B}^{(q)}$ be distributed as follows, independent of all other variables. Set $\overline{B}^{(p)}_{ij} = 0$ whenever $\sigma_0(i)  = \sigma_0(j)$ (note that this is the opposite condition as for $B^{(p)}$). Otherwise,
\begin{align*}
\overline{B}^{(p)}_{ij} = \begin{cases} 1 - y & \text{with probability } \frac{1}{2}\alpha (1-p)\\
y - 1 & \text{with probability } \frac{1}{2}\alpha (1-p)\\
 0 &\text{otherwise.}
\end{cases}
\end{align*}

Similarly, set $\overline{B}^{(q)}_{ij} = 0$ whenever $\sigma_0(i) \neq \sigma_0(j)$ or $i = j$. Otherwise,
\begin{align*}
\overline{B}^{(q)}_{ij} = \begin{cases} 1 - y & \text{with probability } \frac{1}{2}\alpha (1-q)\\
 y - 1 & \text{with probability } \frac{1}{2}\alpha (1-q)\\
 0 &\text{otherwise.}
\end{cases}
\end{align*}
Note that these distributions are symmetric and $\mathbb{E}\left[\overline{B}^{(p)}_{ij} \right] = \mathbb{E}\left[\overline{B}^{(q)}_{ij} \right] = 0$ for all $i, j$. Applying Jensen's inequality again,
\begin{align*}
\mathbb{E}\left[\Vert A \circ R \Vert_2 \right] &\leq \mathbb{E}\left[ \Vert D - B \Vert_2 \right] + \mathbb{E}\left[\left\Vert  B^{(p)} + \mathbb{E}\left[\overline{B}^{(p)} \right] \right\Vert_2 \right] + \mathbb{E}\left[\left\Vert  B^{(q)} + \mathbb{E}\left[\overline{B}^{(q)} \right] \right\Vert_2 \right]\\
&\leq \mathbb{E}\left[ \Vert D - B \Vert_2 \right] + \mathbb{E}\left[\left\Vert  B^{(p)} + \overline{B}^{(p)}  \right\Vert_2 \right] + \mathbb{E}\left[\left\Vert  B^{(q)} + \overline{B}^{(q)}  \right\Vert_2 \right].
\end{align*}
Let $X(r)$ denote the zero-diagonal matrix whose non-diagonal entries are i.i.d according to $\mu(r) = \frac{r}{2} \delta_1 + \frac{r}{2} \delta_{-1} + (1-r) \delta_0$. We have
\begin{align*}
\mathbb{E}\left[ \Vert D - B \Vert_2 \right] &= \mathbb{E}\left[ \Vert X(\alpha)\Vert_2 \right] \\
\mathbb{E}\left[\left\Vert  B^{(p)} + \overline{B}^{(p)}  \right\Vert_2 \right] &= (1-y) \mathbb{E}\left[\left \Vert X(\alpha (1-p))\right\Vert_2
\right]\\
\mathbb{E}\left[\left\Vert  B^{(q)} + \overline{B}^{(q)}  \right\Vert_2 \right] &= (1-y) \mathbb{E}\left[\left \Vert X(\alpha (1-q))\right\Vert_2
\right].
\end{align*}
For $r \geq c_0 \frac{\log n}{n}$,
\[\mathbb{E}\left[\Vert X(r) \Vert_2 \right] \leq c_2\sqrt{nr}, \]
for $c_2$ depending only on $c_0$ (see e.g. \cite[Theorem 9]{Hajek2016}). 
Putting everything together,
\begin{align*}
\mathbb{E}\left[\Vert A -A^{\star}\Vert_2\right] &\leq 2 \left[c_2(t)\sqrt{n \alpha} + (1-x)c_2\left(t(1-p) \right)\sqrt{n \alpha(1-p)} + (1-x)c_2\left(t(1-q) \right)\sqrt{n \alpha(1-q)}\right]\\
&\leq c_3(p,q,t) \sqrt{\log(n)},
\end{align*}
for some constant $c_3(p,q,t)$ depending only on $p$, $q$, and $t$. Applying Talagrand's concentration inequality for bounded Lipschitz functions as in the proof of \cite[Theorem 9]{Hajek2016} concludes the proof.
\end{proof}

\begin{proof}\emph{(Proof of Lemma \ref{lemma:supporting-2}).}
We follow the proof of Lemma 7 in \cite{Abbe2020}, deriving a Chernoff bound. Without loss of generality we may assume $\Vert w \Vert_{\infty} = 1$. Let $S_n = \sum_{i=1}^n w_i (X_i - \mathbb{E}[X_i])$. We prove the upper tail inequality first. For $\lambda, r > 0$,
\begin{align*}
\mathbb{P}\left(S_n \geq r \right) &= \mathbb{P}\left(e^{\lambda S_n} \geq e^{\lambda r} \right)\\
&\leq e^{-\lambda r}\mathbb{E}\left[e^{\lambda S_n} \right]\\
&= e^{-\lambda r} \prod_{i=1}^n \mathbb{E}\left[e^{\lambda w_i (X_i - \mathbb{E}[X_i])} \right].
\end{align*}
Computing the moment generating function,
\begin{align*}
\mathbb{E}\left[e^{\lambda w_i (X_i - \mathbb{E}[X_i])}\right] &= e^{-\lambda w_i(p_i - q_i y)}\mathbb{E}\left[e^{\lambda w_i X_i}\right]\\
&=e^{-\lambda w_i(p_i - q_i y)}\left(p_i e^{\lambda w_i} + q_i e^{-\lambda y w_i} + 1 - p_i - q_i \right).
\end{align*}
Taking the logarithm and applying $\log(1+x) \leq x$ for $x > -1$ and $e^x \leq 1 + x + \frac{e^z}{2}x^2$ for $|x| \leq z$,
\begin{align*}
&\log\left(\mathbb{E}\left[e^{\lambda w_i (X_i - \mathbb{E}[X_i])}\right] \right)\\
&= -\lambda w_i(p_i - q_i y) + \log\left(p_i e^{\lambda w_i} + q_i e^{-\lambda y w_i} + 1 - p_i - q_i  \right)\\
&\leq -\lambda w_i(p_i - q_i y) + p_i\left(e^{\lambda w_i} - 1 \right) + q_i\left(e^{-\lambda y w_i} - 1 \right)  \\
&\leq -\lambda w_i(p_i - q_i y) + p_i \left(\lambda w_i + \frac{e^{\lambda \Vert w \Vert_{\infty}}}{2} \left(\lambda w_i \right)^2 \right) + q_i \left(-\lambda y w_i + \frac{e^{\lambda y \Vert w \Vert_{\infty}}}{2} \left(\lambda y w_i \right)^2 \right)\\
&= \frac{\lambda^2 w_i^2}{2} \left(p_i e^{\lambda \Vert w \Vert_{\infty}} +  q_i y^2 e^{\lambda y \Vert w \Vert_{\infty}} \right)\\
&\leq \frac{\lambda^2 w_i^2 \max\{1, y^2\} \exp\left(\lambda \cdot \max\{1,y\} \Vert w \Vert_{\infty}\right)}{2}(p_i + q_i).
\end{align*}
Therefore, using $\Vert w \Vert_{\infty} = 1$, 
\begin{align*}
\log\left(\mathbb{P}\left(S_n \geq r \right) \right)&\leq -\lambda r + \frac{\lambda^2 \Vert w\Vert_2^2 \max\{1, y^2\} \exp\left(\lambda \cdot \max\{1,y\} \right)}{2} \max_i \{p_i + q_i\}.
\end{align*}
Choose 
\begin{align*}
\lambda &= \frac{1}{\max\{1,y\}}\left(1 \vee \log\left(\frac{\sqrt{n}}{\Vert w \Vert_2} \right) \right) > 0.
\end{align*}
We then have
\begin{align*}
\exp\left(\lambda \cdot \max\{1,y\} \right) &= e \vee \frac{\sqrt{n}}{\Vert w \Vert_2} \leq \frac{e \sqrt{n}}{\Vert w \Vert_2}.
\end{align*}
Note that $\Vert w \Vert_2 \leq \sqrt{n} \Vert w \Vert_{\infty} = \sqrt{n}$. Using $1 \vee \log x \leq \sqrt{x}$ for $x \geq 1$,
\begin{align*}
\lambda^2 \Vert w \Vert_2^2 \exp\left(\lambda \cdot \max\{1,y\} \right) &\leq \lambda^2 e \sqrt{n} \Vert w \Vert_2\\
&\leq \frac{1}{(\max\{1,y\})^2}\left(1 \vee \log\left(\frac{\sqrt{n}}{ \Vert w \Vert_2} \right)\right)^2 e \sqrt{n} \Vert w \Vert_2\\
&\leq \frac{e n}{\max\{1,y^2\}}.
\end{align*}
Substituting,
\begin{align*}
\log\left(\mathbb{P}\left(S_n \geq r \right) \right)&\leq -\lambda r + \frac{en}{2} \max_i\{p_i + q_i\}.
\end{align*}
Let $r = \lambda^{-1}(2 + \beta)n \cdot \max_i\{p_i + q_i\}$, so that
\begin{align*}
\log\left(\mathbb{P}\left(S_n \geq r \right) \right) &\leq  -(2+\beta) n \cdot \max_i\{p_i + q_i\} + \frac{en}{2} \max_i\{p_i + q_i\}\\
&\leq -\beta n \max_i\{p_i + q_i\}.
\end{align*}
By replacing $w$ with $-w$ we can obtain a lower tail bound. The proof is complete by a union bound.
\end{proof}
\begin{proof}\emph{(Proof of Lemma \ref{lemma:supporting-3}).}
The derivation is similar to Chernoff bound. 
Let $\lambda < 0$ to be determined later. By Markov's inequality, 
\begin{align*}
\PR \bigg(\sum_{i=1}^{n_1} W_i - \sum_{i=1}^{n_2} Z_i \leq \ve \log(n) \bigg) &= \PR \bigg(\exp\bigg(\lambda \sum_{i=1}^{n_1} W_i -\lambda\sum_{i=1}^{n_2}Z_i\bigg) \geq \exp\big(\lambda \ve \log(n)\big) \bigg)\\
&\leq n^{-\lambda \ve} \mathbb{E}\bigg[\exp\bigg(\lambda \sum_{i=1}^{n_1} W_i -\lambda\sum_{i=1}^{n_2}Z_i \bigg) \bigg].
\end{align*}
Using $\log(1+x) \leq x$ for $x > -1$, we have 
\begin{align*}
\log \Big(\mathbb{E}\big[ e^{\lambda W_i}\big] \Big) &= \log \left(e^{\lambda} \alpha p + e^{-\lambda y} \alpha (1-p) + 1 -\alpha \right)\\
&\leq e^{\lambda} \alpha p + e^{-\lambda y} \alpha (1-p)  -\alpha\\
&= \alpha \left(e^{\lambda} p + e^{-\lambda y}  (1-p)  -1\right).
\end{align*}
Similarly,
\[\log \Big(\mathbb{E}\big[ e^{-\lambda Z_i}\big] \Big) \leq \alpha \left(e^{-\lambda} q + e^{\lambda y} (1-q) -1 \right).\]
Therefore, using $n_1,n_2 = (1+o(1))\frac{n}{2}$, 
\begin{align*}
&\log \mathbb{P}\bigg(\sum_{i=1}^{n_1} W_i - \sum_{i=1}^{n_2} Z_i \leq \ve \log(n) \bigg)  \\
&\leq -\lambda \ve \log(n) + 
(1+o(1))\frac{n}{2} \alpha \left(e^{\lambda} p + e^{-\lambda y} (1-p) + e^{-\lambda} q + e^{\lambda y} (1-q) -2 \right)\\
&=  \log(n) \bigg[-\lambda \ve  + 
\frac{t}{2} \left(e^{\lambda} p + e^{-\lambda y} (1-p) + e^{-\lambda} q + e^{\lambda y} (1-q) -2 \right) +o(1) \bigg].
\end{align*}
Set $\lambda = \frac{1}{2}\log\big(\frac{q}{p} \big) < 0$. Observe that $e^{\lambda} = \sqrt{\frac{q}{p}}$, and 
\[e^{\lambda y} = \left(\frac{q}{p}\right)^{\frac{\log\left(\frac{1-q}{1-p} \right)}{2\log\left(\frac{p}{q} \right)}} = \sqrt{\frac{1-p}{1-q}}. \]
Therefore,
\begin{align*}
&\log \mathbb{P}\bigg(\sum_{i=1}^{\frac{n}{2}} W_i - \sum_{i=1}^{\frac{n}{2}} Z_i \leq \ve \log(n) \bigg)  \\
&\leq \log(n) \left[-\lambda \ve  + 
\frac{t}{2} \left(\sqrt{pq} + \sqrt{(1-p)(1-q)} + \sqrt{pq} + \sqrt{(1-p)(1-q)} -2 \right) +o(1) \right]\\
&= \log(n) \left[-\lambda \ve  - 
\frac{t}{2} \left(\left(\sqrt{p} - \sqrt{q} \right)^2 + \left(\sqrt{1-q} - \sqrt{1-p} \right)^2 \right) +o(1)\right]. 
\end{align*}
\end{proof}

\section{Proof of Poisson approximation}
\label{sec:appendix-2}
\begin{proof}\emph{(Proof of Lemma \ref{fact:stirling}).}
To prove the claim, observe that under the  assumptions on $m,m_1,m_2$, by Stirling's approximation and the fact that $1-\e^{-x} \asymp x$ as $x\to 0$, 
\begin{align*}
    \frac{m!}{(m-m_1 - m_2)!} &\asymp \frac{\e^{-m} m^{m+\frac{1}{2}}}{\e^{-m+m_1+m_2} (m-m_1-m_2)^{m-m_1-m_2+\frac{1}{2}}} \\
    &= \e^{-m_1-m_2} \frac{(m-m_1-m_2)^{m_1+m_2}}{(1-\frac{m_1+m_2}{m})^{m+\frac{1}{2}}}\\
    &\asymp \frac{(m-m_1-m_2)^{m_1+m_2}}{ (1-\frac{m_1+m_2}{m})^{\frac{1}{2}}}\\
    &\asymp \bigg(\frac{n}{2}\big(1+O(\log ^{-2}n )\big) \bigg) ^{m_1+m_2} 
    \\
    &\asymp 
\left(\frac{n}{2}\right)^{m_1 + m_2},
\end{align*}
where in the last step we have used 
$m_1,m_2 = o(\log ^{3/2} n)$ and the fact that $(1+x)^l \asymp 1+lx$ when $lx \to 0$. Also, 
\[(1-\alpha)^{m-m_1-m_2} \asymp e^{-\alpha(m-m_1-m_2)} \asymp e^{-t\log(n)/2}. \]
Thus, 
\begin{align*}
   \PR(N_a = m_1, N_b = m_2) &= \frac{m!}{m_1!m_2! (m-m_1-m_2)!} (\alpha p)^{m_1}(\alpha (1-p))^{m_2} (1-\alpha)^{m-m_1-m_2} \\
   & \asymp \e^{-\frac{t\log n}{2}}\frac{(\frac{t p \log n}{2})^{m_1}(\frac{t (1-p) \log n}{2})^{m_2}}{m_1!m_2!}, 
\end{align*}
and thus the proof follows. 
\end{proof}

\section{Proof of genie estimator characterization}\label{appendix-3}
\begin{proof}\emph{(Proof of Proposition~\ref{prop:genie-expression-general}).} 
Recall the definition of the degree profile above \eqref{eq:genie-lin-comb}, and also, recall from \eqref{genie-MLE-equal} that the MAP estimator equals the MLE under a uniform prior. 
Fix $u\in [n]$, and let $\sigma_1,\sigma_{-1}\in\cS$ be such that $\sigma_1(u) = +1, \sigma_{-1}(u) = -1$ and $\sigma_1(v) =\sigma_{-1}(v)$ for all $v\in [n] \setminus \{u\}$. 
For a fixed edge-labeled graph $g$, suppose $u$ has degree profile $D(\sigma_1,u)=(d_1(u),d_2(u),d_3(u),d_4(u))$. 
By definition, $D(\sigma_1,u) = D(\sigma_{-1},u)$. 
\begin{align*}
&\PR(G = g\mid \true = \sigma_1 ) \\
&= Z\times \binom{n_1(\sigma_1)-1}{d_1(u),d_2(u)}\binom{n_2(\sigma_{1})}{d_3(u),d_4(u)}(\alpha p_1)^{d_1(u)} (\alpha(1-p_1))^{d_2(u)} \\
& \hspace{7cm}(\alpha q)^{d_3(u)}  (\alpha (1-q))^{d_4(u)} (1-\alpha)^{n-1-\sum_id_i(u)},
\end{align*}
where $Z$ is not dependent on $u$. 
Similarly
\begin{align*}
&\PR(G = g\mid \true = \sigma_{-1} ) \\
&= Z\times \binom{n_1(\sigma_{-1})}{d_1(u),d_2(u)}\binom{n_2(\sigma_{-1})-1}{d_3(u),d_4(u)}(\alpha q)^{d_1(u)} (\alpha(1-q))^{d_2(u)} \\
& \hspace{7cm}(\alpha p_2)^{d_3(u)}  (\alpha (1-p_2))^{d_4(u)} (1-\alpha)^{n-1-\sum_id_i(u)}. 
\end{align*}
Moreover, $n_1(\sigma_{-1}) = n_1(\sigma_1)- 1$ and $n_2(\sigma_{-1}) = n_2(\sigma_1)+ 1$.
Therefore, the binomial coefficients above are also equal and 
\begin{align*}
    &\log \frac{\PR(G = g\mid \true = \sigma_{1})}{\PR(G = g\mid \true = \sigma_{-1} )} = d_1(u) \log \frac{p_1}{q} + d_2(u) \log \frac{1-p_1}{1-q} + d_3(u) \log \frac{q}{p_2} + d_4(u) \log \frac{1-q}{1-p_2}. 
\end{align*}
The genie estimator returns $+1$ if and only if the log likelihood ratio is non-negative. Hence, the proof of Proposition~\ref{prop:genie-expression-general} follows.  
\end{proof}
\end{document}